
\documentclass[11pt, reqno]{amsart}

\usepackage[top=3.75cm, bottom=3cm, left=3cm, right=3cm]{geometry}
\frenchspacing 

\usepackage{amsmath}
\usepackage{amsthm}
\usepackage{amsfonts}
\usepackage{amssymb}
\usepackage{mathtools}
\usepackage{MnSymbol}
\usepackage{bm}
\usepackage[colorlinks=true, citecolor=citation, urlcolor=citation, linkcolor=reference]{hyperref}
\usepackage{color}
\usepackage[all,cmtip]{xy}
\usepackage{enumitem}
\usepackage{tikz}  
\usepackage{graphicx}
\usepackage{scalerel}
\usepackage{framed}
\usepackage{tcolorbox}
\usepackage{comment}
\usepackage[mathscr]{eucal}

\usepackage{stmaryrd}
\SetSymbolFont{stmry}{bold}{U}{stmry}{m}{n}

\usepackage{tikz-cd}

\definecolor{citation}{rgb}{0,.40,.80}
\definecolor{reference}{rgb}{.80,0,.40}

	\usepackage{mathdots}

	\usepackage{bm}



	\graphicspath{ {images/} }





\numberwithin{equation}{section}
\numberwithin{figure}{section}

\newtheorem{theorem}{Theorem}[section]
\newtheorem*{theorem*}{Theorem}

\newtheorem{proposition}[theorem]{Proposition}
\newtheorem*{proposition*}{Proposition}

\newtheorem{corollary}[theorem]{Corollary}

\newtheorem{lemma}[theorem]{Lemma}
\newtheorem*{lemma*}{Lemma}

\newtheorem{lemma-definition}[theorem]{Lemma-Definition}

\theoremstyle{definition}
\newtheorem{definition}[theorem]{Definition}

\newtheorem{remark}[theorem]{Remark}
\newtheorem{construction}[theorem]{Construction}

\newtheorem{example}[theorem]{Example}

\newtheorem{convention}[theorem]{Convention}
\newtheorem{notation}[theorem]{Notation}

\newtheorem{situation}[theorem]{Situation}
\newtheorem{question}[theorem]{Question}


\newcommand{\Set}{\mathscr{S}\mathrm{et}}

\newcommand{\Ab}{\mathscr{A}\mathrm{b}}

\newcommand{\Ch}{\mathrm{Ch}}

\newcommand{\opp}{\mathrm{op}}

\newcommand{\K}{\mathrm{K}}

\DeclareMathOperator{\Mor}{Mor}



\makeatletter
\newcommand{\colim@}[2]{%
  \vtop{\m@th\ialign{##\cr
    \hfil$#1\operator@font colim$\hfil\cr
    \noalign{\nointerlineskip\kern1.5\ex@}#2\cr
    \noalign{\nointerlineskip\kern-\ex@}\cr}}%
}

\newcommand{\limit}{%
  \mathop{\mathpalette\varlim@{\leftarrowfill@\scriptscriptstyle}}\nmlimits@
}
\newcommand{\colimit}{%
  \mathop{\mathpalette\varlim@{\rightarrowfill@\scriptscriptstyle}}\nmlimits@
}
\makeatother



\newcommand{\bL}{{\mathbf L}}

\newcommand{\cO}{\mathscr{O}}
\newcommand{\cA}{\mathscr{A}}
\newcommand{\cC}{\mathscr{C}}



\newcommand{\rC}{\mathrm{C}}

\newcommand{\rF}{\mathrm{F}}

\newcommand{\rH}{\mathrm{H}}

\newcommand{\rK}{\mathrm{K}}

\newcommand{\rN}{\mathrm{N}}
\newcommand{\rR}{\mathrm{R}}

\newcommand{\rP}{\mathrm{P}}
\newcommand{\rQ}{\mathrm{Q}}
\newcommand{\rT}{\mathrm{T}}



\newcommand{\fm}{\mathfrak{m}}

\newcommand{\fS}{\mathfrak{S}}

\newcommand{\bA}{\mathbf{A}}

\newcommand{\bG}{\mathbf{G}}

\newcommand{\bZ}{\mathbf{Z}}
\newcommand{\bP}{\mathbf{P}}

\newcommand{\bR}{\mathbf{R}}






\DeclareMathOperator{\Span}{Span}

\DeclareMathOperator{\Hom}{Hom}

\DeclareMathOperator{\Spec}{Spec}

\DeclareMathOperator{\Pic}{Pic}

\DeclareMathOperator{\Br}{Br}

\newcommand{\CH}{\mathrm{CH}}

\newcommand{\sing}{\mathrm{sing}}

\def\sHom{\mathop{\mathscr{H}\!\mathit{o}\! \kern .4pt \mathit{m}}\nolimits}

\def\sInn{\mathop{\mathscr{I}\! \kern .8pt \mathit{nn}}\nolimits}

\newcommand{\pr}{\mathrm{pr}}

\newcommand{\Var}{\mathrm{Var}}

\newcommand{\id}{\mathrm{id}}

\renewcommand{\setminus}{\smallsetminus}

\renewcommand{\subset}{\subseteq}
\renewcommand{\supset}{\supseteq}



\newcommand{\nc}{\newcommand}
\nc{\p}[2]{\frac{\partial #1}{\partial #2}}
\nc{\pp}[2]{\frac{\partial^2 {#1}} {\partial {#2} ^2}}
\nc{\pmix}[3]{\frac{\partial^2 {#1}}{\partial {#2}\, \partial {#3}}}

\newcommand{\bpm}{\begin{pmatrix}}
\newcommand{\epm}{\end{pmatrix}}

\newcommand{\bbm}{\begin{bmatrix}}
\newcommand{\ebm}{\end{bmatrix}}

	\usepackage{cite}

	\setcounter{tocdepth}{2}

	\DeclareMathOperator{\Sd}{Sd}

	\newcommand{\TorMod}{\mathscr{T}\mathrm{or}\mathscr{M}\mathrm{od}}

	\newcommand{\cSS}{\mathscr{SS}}
	
	\newcommand{\sd}{\mathrm{sd}}
	\newcommand{\Star}{\mathrm{Star}}
	\newcommand{\cStar}{\overline{\mathrm{Star}}}

	\newcommand{\Hilb}{\mathrm{Hilb}}

	\newcommand{\join}{\mathrm{Join}}	
	\newcommand{\Link}{\mathrm{Link}}

	\newcommand{\RE}{\mathrm{RE}}
	\newcommand{\re}{\mathrm{re}}

	\newcommand{\SB}{\mathrm{SB}}
	\newcommand{\sbir}{\mathrm{sb}}

	\newcommand{\sVol}{\mathscr{V}\mathrm{ol}}
	\newcommand{\Vol}{\mathrm{Vol}}
	\newcommand{\SmProj}{\mathscr{S}\mathrm{m}\mathscr{P}\mathrm{roj}}
	\newcommand{\SmSep}{\mathscr{S}\mathrm{mSep}}

	\newcommand{\kscat}{\mathscr{RE}}

	\newcommand{\cech}{{\mathrm{\check{C}}}}
	\newcommand{\vertex}{\mathrm{Vert}}

	\newcommand{\col}{\colon}
	\newcommand{\ra}{\rightarrow}
	\newcommand{\dra}{\dashrightarrow}

	\newcommand{\bsigma}{\bm{\sigma}}
	
	\newcommand{\btau}{\bm{\tau}}

	\newcommand{\Bl}{\mathrm{Bl}}




	\begin{document}

	\title{Complexes of stable birational invariants}
	\author{James Hotchkiss}
    \address{Department of Mathematics, Columbia University, New York, NY 10027 \smallskip}
    \email{james.hotchkiss@columbia.edu}

    \author{David Stapleton}
    \email{math.david.stapleton@gmail.com}

	\begin{abstract} 
		We introduce a new stable birational invariant, which takes the form of a functor sending a degenerating variety to the homotopy type of a chain complex. 
		Our invariant is a categorification of the motivic volume of Nicaise and Shinder.

		From the class of the chain complex in a Grothendieck group, we obtain a motivic obstruction to retract rationality, valued in a quotient of the Grothendieck ring of varieties. 
		In addition, we construct a general class of stable birational invariants, with the invariant above as the universal example, given by applying any chosen stable birational invariant (e.g., unramified cohomology) to the strata of a semistable degeneration. 
	\end{abstract}

	\maketitle


	\section{Introduction} 
	\label{sec:introduction}

    \subsection{The rationality problem} 
	Let $k$ be an algebraically closed field of characteristic $0$. 
	A smooth, projective variety $X$ over $k$ is \emph{rational} if it is birational to a projective space $\bP_k^d$, 
	\emph{stably rational} if $X \times \bP_k^r$ is rational for some $r \geq 0$, 
	and \emph{retract rational} if the identity $X \to X$ factors rationally through a map $\bP_k^n \dra X$.
	The three notions are related as follows:
	\[
		\textrm{rational} \implies \textrm{stably rational} \implies \textrm{retract rational}.
	\]
	The first implication is known to be strict \cite{bctssd}, but it is an open question whether stable rationality and retract rationality differ over an algebraically closed field.
	Retract rationality in turn implies \emph{unirationality}, i.e., that $X$ is dominated by projective space, and it is a general problem to determine whether a given unirational variety belongs to any of the three classes above.

	In \cite{NicShin19}, Nicaise and Shinder introduced a remarkable tool in the study of the behavior of stable rationality under degeneration, the \emph{motivic volume}.
	If $\bar K = \bigcup_{d \geq 1} k(\!(t^{1/d})\!)$ is the field of Puiseux series over $k$,
	then the motivic volume is a specialization homomorphism between Grothendieck groups of varieties,
	\[
	 	\Vol:\K_0(\Var_{\bar K}) \to \K_0(\Var_{k}),
	 \] 
	sending a smooth projective variety $X_{\bar K}$ to a class in $\rK_0(\Var_k)$ built up from the strata of a semistable reduction over $k$.
	From the theorem of Larsen and Lunts \cite{larsen-lunts}, the reductions of $\rK_0(\Var_{\bar K})$ and $\rK_0(\Var_k)$ modulo the Lefschetz motive $\bL$ are isomorphic to $\bZ[\SB_{\bar K}]$ and $\bZ[\SB_k]$, the free abelian groups on stable birational equivalence classes over $\bar K$ and $k$, respectively.
	As a result, the motivic volume descends to a homomorphism
	\[
		\Vol_{\sbir} : \bZ[\SB_{\bar K}] \to \bZ[\SB_k],
	\]
	where $\Vol_{\sbir}([X_{\bar K}]_{\sbir})$ is given by the formula
	\begin{equation}
	\label{eq:motivic-vol-intro}
		\sum_{I} (-1)^{|I| + 1} [D_{I}]_{\sbir}, \quad D_{I} = \bigcap_{i \in I} D_i,
	\end{equation}
	for any semistable model of $X_{\bar K}$ over $\Spec k\llbracket t^{1/d} \rrbracket$ with special fiber $X_0 = D_0 \cup \dots \cup D_n$.
	There is also a birational version of the motivic volume due to Kontsevich--Tschinkel \cite{KonTsch19}, with Burnside groups in place of Grothendieck groups.
	A common generalization of the approaches of Nicaise--Shinder and Kontsevich--Tschinkel may be found in \cite{no-refinement}.

	The existence of the motivic volume implies the celebrated result from \cite{NicShin19} that stable rationality specializes in smooth projective families.
	Moreover, it provides the \emph{motivic obstruction to stable rationality}: 
	If $X_{\bar K}$ is a smooth, projective variety over $\bar K$ with
	\[
		\Vol_{\sbir}([X_{\bar K}]) \neq [\Spec k],
	\]
	then $X_{\bar K}$ is not stably rational.
	The motivic obstruction has been applied by Nicaise--Ottem \cite{NOtropical} to establish stable irrationality in a number of previously open cases, including the very general quartic fivefold.

	Our main goal is to construct a functorial version of the motivic volume,
	in which the homomorphism $\Vol_{\sbir}$ is replaced by a functor, 
	the target $\bZ[\SB_k]$ is replaced by a homotopy category of chain complexes, 
	and the expression \eqref{eq:motivic-vol-intro} appears (in a further quotient of the Grothendieck ring of varieties) as the class of a chain complex in the Grothendieck group of an additive category.

	\subsection{The categorical motivic volume}

	To describe our main result, we first require an analogue of stable birational equivalence for morphisms, compatible with composition.
	The following notion is from \cite{manin}.
	
	\begin{definition}[Manin]
	\label{def:intro-r-equiv}
	Let $X$ and $Y$ be smooth, projective varieties over $k$.
	Two rational maps $f, g \col Y \dra X$ are \emph{directly $\rR$-equivalent} if there is a rational map 
	\[
		\bP^1_k \times Y \dra X
	\]
	whose restriction to $\{0\} \times Y$ is $f$, and to $\{\infty\} \times Y$ is $g$.
	Then \emph{$\rR$-equivalence} is the equivalence relation on $X(k(Y))$ generated by direct $\rR$-equivalence, and the set of $\rR$-equivalence classes is denoted by $X(k(Y))/\rR$.
	\end{definition}

	A sequence of rational maps $Z \dra Y \dra X$ may not generally be composed, but there is a natural composition law for $\rR$-equivalence classes of rational maps.
	In fact, if $\SmProj_k$ denotes the category of smooth, projective varieties over $k$, then Kahn and Sujatha \cite{kahn-sujatha} constructed a category, which we denote by $\kscat_k$ (for ``$\mathscr{R}$-$\mathscr{E}$quivalence''), with a functor
	\[
		\{-\}_{\re}:\SmProj_k \to \kscat_k.
	\]
	The functor identifies the objects of $\kscat_k$ with the objects of $\SmProj_k$, but the morphisms in $\kscat_k$ are the $\rR$-equivalence classes of rational maps between smooth, projective varieties:
		\[
			\Mor_{\kscat_k}(\{Y\}_{\re}, \{X\}_{\re}) = X(k(Y))/\rR.
		\]
	Birational morphisms and projective bundles go to isomorphisms in $\kscat_k$, and it turns out that $\{-\}_{\re}$ is universal with respect to this property:

	\begin{remark}[The universal stable birational invariant]
		The Kahn--Sujatha category $\kscat_k$ is the universal functorial stable birational invariant, in the following sense:
		Any functor
		\[
			\rF : \SmProj_k \to \cC
		\]
		which sends birational morphisms and projective bundles to isomorphisms in a category $\cC$, factors uniquely through $\kscat_k$.
        However, it is not known if \emph{all} isomorphisms in $\kscat_k$ come from stable birational equivalences.\footnote{For example, if $X$ is retract rational, then $\{X\}_{\re}$ is isomorphic to $\{\Spec k\}_{\re}$: Since $\{\bP^n_k\}_{\re}$ is isomorphic to $\{\Spec k\}_{\re}$, the structure map $X \to \Spec k$ is a two-sided inverse to the inclusion of a point $\Spec k \to X$.}
	\end{remark}

	The category $\kscat_k$ plays the role of the set of stable birational equivalence classes $\SB_k$. In analogy with the free abelian group $\bZ[\SB_k]$, there is a simple formal procedure to extend $\kscat_k$ to an additive category $\bZ[\kscat_k]$ by including formal $\bZ$-linear combinations of morphisms, and then freely adjoining finite direct sums of objects and a zero object.
	In particular, we may speak of chain complexes with entries in $\bZ[\kscat_k]$, along with the homotopy category of bounded chain complexes, which we denote by $K^b(\bZ[\kscat_k])$.

	\begin{theorem}[Categorical motivic volume]
	\label{thm:intro-categorical-motivic-volume}
	    Let $k$ be an algebraically closed field of characteristic $0$, and let $\bar K$ be the field of Puiseux series over $k$.
        There is an additive functor
	    \[
	    	\sVol_{\re}:\bZ[\kscat_{\bar K}] \to K^b(\bZ[\kscat_k]),
	    \]
	    satisfying the following property:
		Given a semistable model $X_{\bar K}$ over $\Spec k\llbracket t^{1/d} \rrbracket$ for some $d > 0$ with special fiber $X_0 = D_0 \cup \dots \cup D_n$,
	    $\sVol_{\re}(\{X_{\bar K}\}_{\re})$ is homotopic to the \v{C}ech complex 
	   	\begin{equation}
	   	\label{eq:intro-cech-cpx-ks}
		\begin{tikzcd}[column sep=small]
			0 \ar[r] & \cdots \ar[r] &  \displaystyle{\bigoplus_{i < j < k} \{D_i \cap D_j \cap D_j\}_{\re} } \ar[r] & \displaystyle{\bigoplus_{i < j} \{D_i \cap D_j\}_{\re} } \ar[r] & \displaystyle{\bigoplus_{i} \{D_i\}_{\re} } \ar[r] & 0 .
		\end{tikzcd}
    \end{equation}
    The differential of \eqref{eq:intro-cech-cpx-ks} is the usual one in \v{C}ech theory, given by an alternating sum of the $\rR$-equivalence classes of the inclusions, e.g., $D_i \cap D_j \subset D_i, D_j$.
	Moreover, it may happen that an intersection in \eqref{eq:intro-cech-cpx-ks}, e.g., $D_i \cap D_j$, is disconnected; if so, $\{D_i \cap D_j\}_{\re}$ refers to the formal direct sum of its connected components. 
	\end{theorem}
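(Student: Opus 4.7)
The plan is to define $\sVol_{\re}(\{X_{\bar K}\}_{\re})$ to be the \v{C}ech complex \eqref{eq:intro-cech-cpx-ks} for a chosen semistable model $\cX$ of $X_{\bar K}$ over $\Spec k\llbracket t^{1/d} \rrbracket$, viewed as an object of $K^b(\bZ[\kscat_k])$ via the functor $\{-\}_{\re}$ applied to each stratum and its face inclusions. With this definition the asserted description of $\sVol_{\re}(\{X_{\bar K}\}_{\re})$ holds on the nose, and the substance of the theorem reduces to two verifications: (a) independence of the choice of semistable model, up to homotopy in $K^b(\bZ[\kscat_k])$, and (b) functoriality with respect to morphisms in $\bZ[\kscat_{\bar K}]$, i.e., $\rR$-equivalence classes of rational maps. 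Additivity on objects of $\bZ[\kscat_{\bar K}]$ is then automatic from the \v{C}ech construction, since a disjoint union of generic fibers admits a disjoint union of semistable models.

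For independence of the semistable model I would use that any two semistable models are dominated by a common third one via a sequence of blowups along closed strata, a consequence of the strong semistable reduction results of Abramovich--Karu and Abramovich--Temkin. It therefore suffices to compare \v{C}ech complexes under a single blowup $\cX' \to \cX$ along a stratum $D_I = \bigcap_{i \in I} D_i$. The inputs are that the exceptional divisor $E$ is a projective bundle over $D_I$, hence represents $\{D_I\}_{\re}$ in $\kscat_k$, and more generally that all new strata introduced by the blowup are projective bundles over intersections of the $D_j$ with $D_I$. A bookkeeping argument along the lines of the well-definedness proof for the Nicaise--Shinder motivic volume then exhibits the map on \v{C}ech complexes as the identity outside a summand that is acyclic via an explicit contracting homotopy built from the projective bundle isomorphisms.

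For functoriality, given a rational map $f : Y_{\bar K} \dra X_{\bar K}$, choose semistable models $\cY, \cX$ and resolve the indeterminacy of $f$ by further modifying $\cY$; after replacing $\cY$ by a further blowup, one may arrange that each stratum of the new special fiber maps into a stratum of $X_0$. The induced rational maps on strata, passed through $\{-\}_{\re}$, assemble into a chain map between \v{C}ech complexes, independent up to homotopy of the choice of resolution by the invariance argument in the previous paragraph applied to $\cY$. To show that $\rR$-equivalent maps produce homotopic chain maps, use the witnessing rational map $\bP^1_{\bar K} \times Y_{\bar K} \dra X_{\bar K}$: choosing a compatible semistable model of the total space and restricting the resulting \v{C}ech data at $\{0\}$ and $\{\infty\}$ yields the desired chain homotopy, with the $\bP^1$-factor producing the degree shift.

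The main obstacle is step (a): although the existence of a common refinement of two semistable models is standard, one must carefully match up strata before and after a stratum blowup and verify that the resulting map of \v{C}ech complexes is a genuine homotopy equivalence in $\bZ[\kscat_k]$, not merely a quasi-isomorphism (the latter being insufficient, since $\bZ[\kscat_k]$ is not abelian and $K^b$ is not a derived category). A secondary subtlety is that the chain homotopies used in step (b) must themselves be well-defined up to $\rR$-equivalence on each stratum, which is where the formalism of Kahn--Sujatha interacts most tightly with the geometric reductions and where the passage from $\bP^1$-parametrized rational maps on generic fibers to strata-wise $\rR$-equivalences on special fibers has to be checked.
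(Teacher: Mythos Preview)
Your overall strategy matches the paper's in spirit, but there is a structural gap in step~(b) that the paper handles quite differently, and a technical oversimplification in step~(a).

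The central issue is that the \v{C}ech complex is \emph{not} functorial for morphisms of snc pairs: it depends on an ordering of the components $D_i$, and a morphism $f\colon(Y,E)\to(X,D)$ neither sends strata to strata of the same codimension nor respects orderings. Your plan to ``arrange that each stratum of the new special fiber maps into a stratum of $X_0$'' and then assemble a chain map does not yield a chain map between \v{C}ech complexes. The paper's solution is to introduce an auxiliary \emph{subdivision complex} $\Sd(X,D;\rF)$, indexed by chains $\sigma_0<\cdots<\sigma_n$ in the poset of strata rather than by single strata. This complex \emph{is} functorial for arbitrary proper morphisms of toroidal models (via $\sigma\mapsto\Span(f(D_\sigma))$ on posets), and a substantial portion of the paper (the \v{C}ech comparison theorem) is spent proving that $\Sd$ is homotopy equivalent to the \v{C}ech complex via explicit last-vertex and subdivision maps. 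The functor $\sVol_\rF$ is then defined as a cofiltered inverse limit of subdivision complexes over all semistable models.

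On step~(a), your claim that two semistable models are dominated by a third via blowups along strata is too optimistic, for two reasons. First, the models may live over $k\llbracket t^{1/d}\rrbracket$ and $k\llbracket t^{1/d'}\rrbracket$ for different $d,d'$; passing to a common base introduces \emph{toroidal} (not snc) singularities, which is why the paper develops the entire machinery for toroidal embeddings and proves invariance under star subdivisions and barycentric subdivisions, not only stratum blowups. Second, once on a common base, weak factorization yields blowups along smooth centers having simple normal crossings with $D$, not necessarily along strata; the paper treats this more general case separately.

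Finally, your approach to $\rR$-equivalence compatibility via a semistable model of $\bP^1\times Y$ is replaced in the paper by something much cleaner: one proves $\sVol_\rF$ sends birational morphisms to isomorphisms (using weak factorization over $\bar K$ together with the blowup invariance already established for models), and then invokes the universal property of $\kscat_{\bar K}$. By the theorem of Kahn--Sujatha and Colliot-Th\'el\`ene, any functor out of $\SmProj_{\bar K}$ inverting birational morphisms is automatically a stable birational invariant, hence factors through $\kscat_{\bar K}$. This sidesteps entirely the delicate spreading-out you anticipated.
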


    Because the invariant is a functor, it may be used to obstruct retract rationality, as we discuss below.
	The proof of Theorem~\ref{thm:intro-categorical-motivic-volume} is based, much like the arguments in \cite{NicShin19}, on the weak factorization theorem.
	However, the \v{C}ech complex \eqref{eq:intro-cech-cpx-ks} is not functorial in a na\"ive way (e.g., with respect to morphisms of snc pairs),
    so in the course of the proof, we replace the \v{C}ech complex with a more complicated (but functorial) complex that has the same homotopy type.

	\subsection{A motivic obstruction to retract rationality}
	For lack of better terminology, we say two varieties $X, Y$ are \emph{$\rR$-equivalent} if they are isomorphic in $\kscat_k$, or equivalently if there exist $f \in X(k(Y))/\rR$ and $g \in Y(k(X))/\rR$ such that $f \circ g$ and $g \circ f$ are $\rR$-equivalent to the identities.
	For example, a pair of varieties which are stably birational to each other are also $\rR$-equivalent, and a retract rational variety is $\rR$-equivalent to $\Spec k$.
	If $\bZ[\RE_k]$ denotes the free abelian group on the set of $\rR$-equivalence classes of varieties over $k$, 
	then there is a surjective homomorphism
\begin{equation*}
    \bZ[\SB_k] \to \bZ[\RE_k],
\end{equation*}
	sending $[X]_{\sbir}$ to the $\rR$-equivalence class $[X]_{\re}$.
	It turns out that $\bZ[\RE_{\bar K}]$ and $\bZ[\RE_k]$ are the Grothendieck groups of the domain and target of the functor $\sVol_{\re}$, so Theorem~\ref{thm:intro-categorical-motivic-volume} directly implies the following:

	\begin{corollary}
	\label{cor:intro-re-motivic-volume}
        Let $k$ be an algebraically closed field of characteristic $0$, and let $\bar K$ be the field of Puiseux series over $k$.
	    There is a group homomorphism
	    \[
	    	\Vol_{\re}:\bZ[\RE_{\bar K}] \to \bZ[\RE_k],
	    \]
	    given by sending $[X_{\bar K}]_{\re}$ to the image of the motivic volume $\Vol_{\sbir}([X_{\bar K}]_{\sbir}) \in \bZ[\SB_k]$ under the natural surjection $\bZ[\SB_k] \to \bZ[\RE_k]$.
	\end{corollary}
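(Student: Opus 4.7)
The plan is to deduce the corollary by taking Grothendieck groups of the functor $\sVol_{\re}$ from Theorem~\ref{thm:intro-categorical-motivic-volume}. First I would identify the relevant Grothendieck groups. Since $\bZ[\kscat_k]$ is obtained from $\kscat_k$ by formally adjoining $\bZ$-linear combinations of morphisms, finite direct sums, and a zero object, its (split) Grothendieck group $\K_0^{\oplus}(\bZ[\kscat_k])$ is the free abelian group on isomorphism classes of objects of $\kscat_k$. By definition, two smooth projective varieties are isomorphic in $\kscat_k$ if and only if they are $\rR$-equivalent, so $\K_0^{\oplus}(\bZ[\kscat_k]) = \bZ[\RE_k]$, and similarly over $\bar K$.

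For the target, the bounded homotopy category $\K^b(\bZ[\kscat_k])$ is triangulated, with distinguished triangles coming from mapping cones. A standard fact (valid for any additive category $\cA$) is that the Euler characteristic
\[
	\chi: \K_0(\K^b(\cA)) \xrightarrow{\;\sim\;} \K_0^{\oplus}(\cA), \qquad [C^\bullet] \mapsto \sum_i (-1)^i [C^i],
\]
is an isomorphism, with inverse sending an object to the class of the complex concentrated in degree $0$. Hence $\K_0(\K^b(\bZ[\kscat_k])) \cong \bZ[\RE_k]$ as well.

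Taking $\K_0$ of the additive functor $\sVol_{\re} : \bZ[\kscat_{\bar K}] \to \K^b(\bZ[\kscat_k])$ therefore yields a group homomorphism $\Vol_{\re} : \bZ[\RE_{\bar K}] \to \bZ[\RE_k]$. It remains to identify $\Vol_{\re}([X_{\bar K}]_{\re})$. Choosing a semistable model with special fiber $X_0 = D_0 \cup \cdots \cup D_n$, Theorem~\ref{thm:intro-categorical-motivic-volume} tells us that $\sVol_{\re}(\{X_{\bar K}\}_{\re})$ is homotopy equivalent to the \v{C}ech complex \eqref{eq:intro-cech-cpx-ks}. Applying the Euler characteristic and comparing with the formula \eqref{eq:motivic-vol-intro} for $\Vol_{\sbir}$ gives
\[
	\Vol_{\re}([X_{\bar K}]_{\re}) = \sum_{I} (-1)^{|I|+1}[D_I]_{\re},
\]
which is exactly the image of $\Vol_{\sbir}([X_{\bar K}]_{\sbir})$ under the surjection $\bZ[\SB_k] \to \bZ[\RE_k]$. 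Since this is independent of the choice of semistable model, the homomorphism is well-defined. The only mildly delicate point is the identification $\K_0(\K^b(\bZ[\kscat_k])) \cong \K_0^{\oplus}(\bZ[\kscat_k])$, but this is formal once one records that $\bZ[\kscat_k]$ has been constructed as a genuine additive category.
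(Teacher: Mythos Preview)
Your argument is correct and matches the paper's own proof essentially verbatim: the paper identifies $\K_0(\bZ[\kscat_{\bar K}]) = \bZ[\RE_{\bar K}]$ via Lemma~\ref{lem:k-group}, invokes the standard isomorphism $\K_0(K^b(\cA)) \cong \K_0^\oplus(\cA)$ (citing \cite{rose}), and then passes to Grothendieck groups of the additive functor $\sVol_{\re}$. Your final sentence about independence of the semistable model is slightly redundant---once $\Vol_{\re}$ is obtained as $\K_0$ of a functor, well-definedness is automatic, and the independence of the formula is a consequence rather than something to be checked.
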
 

	A variety $X$ over $k$ is \emph{universally $\rR$-trivial} if it is $\rR$-equivalence to a point, or equivalently, if $X(k(Y))/\rR$ is a singleton for each variety $Y$ over $k$.
	In fact, it was shown in \cite{AsokMorel} that universal $\rR$-triviality is equivalent to \emph{$\bA^1$-connectedness} in the sense of $\bA^1$-homotopy theory, and there is a substantial literature on the topic under that name.
	Universal $\rR$-triviality lies downstream of the notions of rationality discussed above:
	\[
		\textrm{retract rational} \implies \textrm{ universally $\rR$-trivial} \implies \begin{cases}
			\textrm{ $\bZ$-decomposition of the diagonal}, \\
			\textrm{ $+$ rationally connected}.
		\end{cases} 
	\]
	It is not known if either implication is strict, as there is no known example of a rationally connected variety with an integral decomposition of the diagonal which is not stably rational, cf. \cite[\S 11]{schreieder-nr}.
	As indicated, universally $\rR$-trivial varieties are rationally connected, but it does not seem to be known if they are unirational. 

	\begin{corollary}[The motivic obstruction to retract rationality]
	\label{cor:motivic-obstruction-to-rr}
	    If $\Vol_{\re}([X_{\bar K}]_{\re}) \neq [\Spec k]_{\re}$, then $X_{\bar K}$ is not universally $\rR$-trivial, and therefore is not retract rational.
	\end{corollary}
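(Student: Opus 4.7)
The plan is to prove the contrapositive: assuming $X_{\bar K}$ is universally $\rR$-trivial, I would show that $\Vol_{\re}([X_{\bar K}]_{\re}) = [\Spec k]_{\re}$. Since retract rationality implies universal $\rR$-triviality (as recalled in the excerpt), this will establish both halves of the claim.

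The first step is to translate universal $\rR$-triviality into an isomorphism $\{X_{\bar K}\}_{\re} \cong \{\Spec \bar K\}_{\re}$ in $\kscat_{\bar K}$. Here I would observe that $\{\Spec \bar K\}_{\re}$ is terminal in $\kscat_{\bar K}$, since for any smooth projective $Y$ over $\bar K$ the Hom set $(\Spec \bar K)(\bar K(Y))/\rR$ consists of the structure map alone. By the definition of universal $\rR$-triviality, the sets $X_{\bar K}(\bar K(Y))/\rR$ are also singletons (non-emptiness is automatic because $\bar K$ is algebraically closed and $X_{\bar K}$ is a non-empty smooth projective variety), so $\{X_{\bar K}\}_{\re}$ is likewise terminal, and hence canonically isomorphic to $\{\Spec \bar K\}_{\re}$.

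Next, I would apply the additive functor $\sVol_{\re}$ of Theorem~\ref{thm:intro-categorical-motivic-volume} to this isomorphism and pass to Grothendieck groups. Any functor between additive categories sends isomorphic objects to equal classes in the Grothendieck group; under the identifications stated before Corollary~\ref{cor:intro-re-motivic-volume}, this yields $\Vol_{\re}([X_{\bar K}]_{\re}) = \Vol_{\re}([\Spec \bar K]_{\re})$. To evaluate the right-hand side, I would use the trivial semistable model $\Spec k\llbracket t \rrbracket$ of $\Spec \bar K$, whose special fiber is the single component $\Spec k$: the \v{C}ech complex \eqref{eq:intro-cech-cpx-ks} reduces to $\{\Spec k\}_{\re}$ concentrated in degree zero, giving $\Vol_{\re}([\Spec \bar K]_{\re}) = [\Spec k]_{\re}$. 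Given the heavy lifting done by Theorem~\ref{thm:intro-categorical-motivic-volume} and Corollary~\ref{cor:intro-re-motivic-volume}, this corollary is a purely formal consequence of functoriality; I expect the only mildly delicate point to be the terminality argument above.
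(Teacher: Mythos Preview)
Your proposal is correct and follows essentially the same reasoning the paper intends: the corollary is a formal consequence of the existence of the homomorphism $\Vol_{\re}$ (Corollary~\ref{cor:intro-re-motivic-volume}) together with the fact that a universally $\rR$-trivial variety is isomorphic to $\Spec \bar K$ in $\kscat_{\bar K}$. Note that the paper's definition of universal $\rR$-triviality already reads ``$\rR$-equivalent to a point,'' i.e., isomorphic to $\{\Spec \bar K\}_{\re}$ in $\kscat_{\bar K}$, so your terminality argument, while correct, is not strictly needed---but it does make the logic self-contained.
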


    It seems to us to be necessary to go through Theorem~\ref{thm:intro-categorical-motivic-volume} in order to prove Corollary~\ref{cor:intro-re-motivic-volume} and Corollary~\ref{cor:motivic-obstruction-to-rr}.
    Certainly, there is a homomorphism
    \begin{equation}
    \label{eq:composition-mv}
        \bZ[\SB_{\bar K}] \to \bZ[\RE_k],
    \end{equation}
    by composing the Nicaise--Shinder motivic volume with the natural surjection from $\bZ[\SB_k]$ to $\bZ[\RE_k]$.
    If one wishes to show directly, however, that \eqref{eq:composition-mv} factors through $\bZ[\RE_{\bar K}]$,
    then one faces the difficult problem of describing the kernel of $\bZ[\SB_{\bar K}] \to \bZ[\RE_{\bar K}]$.
    
	Of course, Corollary~\ref{cor:motivic-obstruction-to-rr} would be useless if $\Vol_{\re}([X_{\bar K}]_{\re})$ is often equal to $[\Spec k]_{\re}$.
	As we explain below, the methods currently used to show $\Vol_{\sbir}([X_{\bar K}]_{\sbir}) \neq [\Spec k]_{\sbir}$ also imply, in practice, that $\Vol_{\re}([X_{\bar K}]_{\re}) \neq [\Spec k]_{\re}$.

	\begin{remark}[Specialization of universal $\rR$-triviality]
		The existence of $\Vol_{\re}$ in Corollary~\ref{cor:intro-re-motivic-volume} implies that universal $\rR$-triviality specializes in smooth projective families.
		However, there is a simple, direct proof (essentially identical to the argument that a decomposition of the diagonal specializes) via the specialization map for $\rR$-equivalence classes of zero cycles from \cite{kollar-spec}, see \S\ref{ssec:elementary-proof}.
	\end{remark}

	\subsection{Stable rationality and retract rationality}
	In contrast to many of the standard obstructions to stable rationality (e.g., decomposition of the diagonal), Nicaise and Shinder's motivic obstruction does not necessarily vanish for retract rational varieties.
	This has led to the following question, raised in \cite{PSQ5fold}, of whether the motivic volume could be used to construct a retract rational, stably irrational variety:

    \begin{question}
    \label{question:sr-vs-rr}
        Can one find a smooth, projective retract rational variety $X_{\bar K}$ such that 
        \begin{equation}
            \label{eq:ques-motivic-obstruction}
            \Vol_{\sbir}([X_{\bar K}]_{\sbir}) \neq [\Spec k]_{\sbir} ?
        \end{equation}
        If so, then $X_{\bar K}$ is retract rational but stably irrational. 
    \end{question}

    Our results show that Question~\ref{question:sr-vs-rr} is not a practical strategy for constructing retract rational, stably irrational varieties.
    Given a retract rational variety $X_{\bar K}$,
    Corollary~\ref{cor:motivic-obstruction-to-rr} implies that
    \begin{equation*}
        \Vol_{\re}([X_{\bar K}]_{\re}) = [\Spec k]_{\re},
    \end{equation*}
    so the problem of verifying \eqref{eq:ques-motivic-obstruction} is at least as difficult as the problem of showing directly that $\bZ[\SB_k] \to \bZ[\RE_k]$ is not an isomorphism.
    
    On the other hand, it is also possible to use the motivic obstruction to retract rationality (Corollary~\ref{cor:motivic-obstruction-to-rr}) to prove that very general members of certain classes of varieties are retract irrational. 
    As an illustration, we give a quick proof of the retract irrationality of very general quartic fivefolds in characteristic $0$.
    The analogous result for stable irrationality is from \cite{NOtropical}, while retract irrationality is proved \cite{PSQ5fold} through an argument which is more complicated than what follows but works in positive characteristic.

	\begin{example}[Quartic fivefolds]
	\label{ex:intro-quartic-fivefolds}
		The motivic volume of a certain degeneration of a quartic fivefold is computed in \cite[\S 5]{NOtropical}; by Corollary~\ref{cor:intro-re-motivic-volume}, the same formula holds in $\bZ[\RE_k]$:
		\begin{equation}
			\label{eq:volsbir-quartic}
			\Vol_{\re}([X_{\bar K}]_{\re}) = 2[Z]_{\re} - [Q]_{\re}.
		\end{equation}
		Here, $Q$ is a very general quartic double fourfold.
		By \cite{HPTQDF}, $Q$ does not have a decomposition of the diagonal, so $Q$ is not universally $R$-trivial. It follows that $\Vol_{\re}(X_{\bar K}) \neq [\Spec k]_{\re}$.
		By Corollary~\ref{cor:motivic-obstruction-to-rr}, $X_{\bar K}$ is not universally $\rR$-trivial, hence is not retract rational.
	\end{example}

    \begin{remark}
        In a sequel \cite{hotchkiss}, the first named author verifies retract irrationality in a number of situations from \cite{NOtropical} and \cite{moe}, where stable irrationality is proven. 
        The methods are in the spirit of Example~\ref{ex:intro-quartic-fivefolds}.
        In particular, it is shown that Moe's bounds on the stable irrationality of hypersurfaces and double covers (which rely upon and extend earlier breakthrough bounds of Schreieder \cite{SchreiederHyp}) may be upgraded to retract irrationality.
        
        We emphasize, however, that for most of the  results from \cite{NOtropical} and some of the results from \cite{moe}, retract irrationality has been proven later by completely different methods (which are more complicated than ours, but extend to positive characteristic). 
        We refer to \cite{PSQ5fold}, \cite{skauli}, \cite{33fivefold} for proofs of retract irrationality in cases from \cite{NOtropical}, and \cite{schreieder-lange} for a proof of retract irrationality in the hypersurface case of \cite{moe}.
    \end{remark}

	\subsection{A more general class of invariants}

	As indicated above, the functor from $\SmProj_k$ to $\kscat_k$ is the universal functorial stable birational invariant; it is also possible to consider an arbitrary stable birational invariant
	\[
		\rF : \SmProj_k \to \cA,
	\]
	where $\cA$ is an additive category.
	Examples of these include the Picard group $\Pic^0(-)$, the Brauer group $\Br(-)$, unramified cohomology $\rH^i_{nr}(k(-), \bm{\mu}_m)$, holomorphic $p$-forms $\rH^0(-, \Omega^p)$, and Chow groups of $0$-cycles $\CH_0(-)$.
	Some of these are covariant, while others are contravariant; in what follows, we suppose that $\rF$ is covariant, since the contravariant case is handled by replacing $\cA$ with $\cA^\opp$.

	Since any stable birational invariant $\rF$ as above factors through $\bZ[\kscat_k]$, Theorem~\ref{thm:intro-categorical-motivic-volume} is equivalent to the following (a priori stronger) statement:

	\begin{theorem}
	\label{thm:intro-arbitrary-additive-category}
        Let $k$ be an algebraically closed field of characteristic $0$, and let $\bar K$ be the field of Puiseux series over $k$.
	    If $\rF:\SmProj_k \to \cA$ is a functorial stable birational invariant valued in additive category $\cA$, then there is a functor
	    \[
	    	\sVol_{\rF}:\bZ[\kscat_{\bar K}] \to K^b(\cA),
	    \]
	    satisfying the following property:
		Given a semistable model $X_{\bar K}$ over $\Spec k\llbracket t^{1/d} \rrbracket$ for some $d > 0$ with special fiber $X_0 = D_0 \cup \dots \cup D_n$,
	    $\sVol_{\rF}(X_{\bar K})$ is homotopic to the \v{C}ech complex
	    \begin{equation}
		\label{eq:intro-cech-cpx}
		\begin{tikzcd}[column sep=small]
			0 \ar[r] & \cdots \ar[r] &  \displaystyle{\bigoplus_{i < j < k} \rF(D_i \cap D_j \cap D_j) } \ar[r] & \displaystyle{\bigoplus_{i < j} \rF(D_i \cap D_j) } \ar[r] & \displaystyle{\bigoplus_{i} \rF(D_i) } \ar[r] & 0 .
		\end{tikzcd}
	\end{equation}
	\end{theorem}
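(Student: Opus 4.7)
The plan is to derive Theorem~\ref{thm:intro-arbitrary-additive-category} as a formal consequence of Theorem~\ref{thm:intro-categorical-motivic-volume}, which is consistent with the author's comment that the two results are equivalent. First I would invoke the universal property of the Kahn--Sujatha category: since $\rF:\SmProj_k \to \cA$ is a functorial stable birational invariant, there is a unique factorization
\[
    \rF \cong \bar\rF \circ \{-\}_{\re}:\SmProj_k \to \kscat_k \to \cA.
\]
Because $\cA$ is additive and $\bZ[\kscat_k]$ is the free additive category obtained from $\kscat_k$ by formally $\bZ$-linearizing morphisms and adjoining finite direct sums, $\bar\rF$ extends uniquely to an additive functor $\bZ[\bar\rF]:\bZ[\kscat_k] \to \cA$. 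Applying $\bZ[\bar\rF]$ termwise to bounded chain complexes then induces an additive functor on homotopy categories $K^b(\bZ[\bar\rF]):K^b(\bZ[\kscat_k]) \to K^b(\cA)$, which is well-defined because additive functors preserve chain homotopies and short null-homotopic data. I would define
\[
    \sVol_{\rF} := K^b(\bZ[\bar\rF]) \circ \sVol_{\re},
\]
with $\sVol_{\re}$ supplied by Theorem~\ref{thm:intro-categorical-motivic-volume}.

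To verify the \v{C}ech property, given a semistable model $X_{\bar K}$ with special fiber $X_0 = D_0 \cup \cdots \cup D_n$, Theorem~\ref{thm:intro-categorical-motivic-volume} asserts that $\sVol_{\re}(\{X_{\bar K}\}_{\re})$ is homotopic in $K^b(\bZ[\kscat_k])$ to the \v{C}ech complex \eqref{eq:intro-cech-cpx-ks} with entries $\{D_I\}_{\re}$ and differentials alternating sums of the $\rR$-equivalence classes of the inclusions $D_I \subset D_{I'}$. Applying $K^b(\bZ[\bar\rF])$ termwise yields a complex with entries $\rF(D_I)$ whose differentials are the corresponding alternating sums of $\rF$ applied to those inclusions, i.e., the \v{C}ech complex \eqref{eq:intro-cech-cpx}; the homotopy equivalence is preserved by termwise application of an additive functor. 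The only mild bookkeeping point is to verify that the convention by which a disconnected intersection $D_I$ is interpreted as the formal direct sum of its components in $\bZ[\kscat_k]$ is sent by $\bZ[\bar\rF]$ to $\bigoplus_\alpha \rF(D_I^{(\alpha)})$; this matches the standard convention that $\rF$ of a disjoint union is the direct sum of $\rF$ on the components, so it causes no difficulty.

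I do not expect a substantive obstacle in this reduction: all of the geometric content is already packaged into Theorem~\ref{thm:intro-categorical-motivic-volume} (which the authors indicate rests on weak factorization and a functorial replacement of the \v{C}ech complex), and Theorem~\ref{thm:intro-arbitrary-additive-category} is obtained from it by the purely formal procedure of postcomposing with the additive extension of the universal factorization through $\kscat_k$. If anything, care is needed only to confirm that $\bZ[\kscat_k]$ is indeed the free additive category on $\kscat_k$ in the relevant sense, so that the extension $\bZ[\bar\rF]$ exists and is unique, and that the termwise functor on bounded complexes descends to the homotopy category -- both of which are standard facts about additive categories.
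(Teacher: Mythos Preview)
Your proposal is correct and matches the equivalence the paper itself flags in the introduction (``Since any stable birational invariant $\rF$ as above factors through $\bZ[\kscat_k]$, Theorem~\ref{thm:intro-categorical-motivic-volume} is equivalent to the following (a priori stronger) statement''). The only difference is one of direction: the paper's actual proof in \S\ref{sec:the-categorical-motivic-volume} establishes Theorem~\ref{thm:intro-arbitrary-additive-category} first, directly from Theorem~\ref{thm:existence-of-functor} (which builds $\sVol_{\rF}$ for arbitrary $\rF$ via the subdivision complex and then descends along the universal properties of $\kscat_{\bar K}$ and $\bZ[\kscat_{\bar K}]$), and obtains Theorem~\ref{thm:intro-categorical-motivic-volume} as the special case $\rF=\{-\}_{\re}$. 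You instead take Theorem~\ref{thm:intro-categorical-motivic-volume} as given and postcompose with $K^b(\bZ[\bar\rF])$. Both routes are valid; yours is the purely formal implication, while the paper's packaging has the minor advantage that the \v{C}ech comparison (Theorem~\ref{thm:cech-comparison}) and the subdivision complex are already stated for arbitrary $\rF$, so no extra ``push $\rF$ through the universal complex'' step is needed.
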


    Whenever the target category $\cA$ is an abelian category, the homology groups of $\sVol_{\rF}(X_{\bar K})$ provide new examples of stable birational invariants:

	\begin{corollary}	
	\label{cor:cohomology-groups}
	    If $\rF:\SmProj_k \to \cA$ is a functorial stable birational invariant valued in an abelian category $\cA$, then for each $i \in \bZ$, there are functors
	    \[
	    	\rH_i(\sVol_{\rF}(-)):\bZ[\kscat_{\bar K}] \to \cA
	    \]
	    taking $X_{\bar K}$ to the $i$th homology group of $\sVol_{\rF}(X_{\bar K})$.
	    In particular, if $\rH_i(\sVol_{\rF}(X_{\bar K})) \neq 0$ for $i > 0$, then $X_{\bar K}$ is not universally $\rR$-connected. 
	\end{corollary}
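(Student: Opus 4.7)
The plan is to deduce both claims formally from Theorem~\ref{thm:intro-arbitrary-additive-category}, using that homology is a homotopy invariant in any abelian category.

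First, I would construct the homology functors. Since $\cA$ is abelian, the assignment $C_\bullet \mapsto \Ker(d_i)/\Ima(d_{i+1})$ defines an additive functor $\Ch^b(\cA) \to \cA$, which descends to an additive functor $\rH_i : K^b(\cA) \to \cA$ because chain-homotopic maps induce the same map on homology. Composing with the categorical motivic volume $\sVol_{\rF} : \bZ[\kscat_{\bar K}] \to K^b(\cA)$ provided by Theorem~\ref{thm:intro-arbitrary-additive-category} yields the desired functor $\rH_i(\sVol_{\rF}(-)) : \bZ[\kscat_{\bar K}] \to \cA$.

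For the obstruction, I would argue by contraposition. Suppose $X_{\bar K}$ is universally $\rR$-trivial, i.e., $\{X_{\bar K}\}_\re \cong \{\Spec \bar K\}_\re$ in $\kscat_{\bar K}$ and hence in $\bZ[\kscat_{\bar K}]$. Applying $\sVol_{\rF}$ yields an isomorphism
\[
\sVol_{\rF}(\{X_{\bar K}\}_\re) \cong \sVol_{\rF}(\{\Spec \bar K\}_\re)
\]
in $K^b(\cA)$. To identify the right-hand side, I would use the trivial semistable model of $\Spec \bar K$ over $\Spec k\llbracket t \rrbracket$, whose special fiber is the single smooth component $\Spec k$. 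Theorem~\ref{thm:intro-arbitrary-additive-category} then shows that $\sVol_{\rF}(\{\Spec \bar K\}_\re)$ is homotopy equivalent to the complex $\rF(\Spec k)$ concentrated in degree $0$, whose homology vanishes in every nonzero degree. The isomorphism above transports this vanishing to $X_{\bar K}$, so $\rH_i(\sVol_{\rF}(\{X_{\bar K}\}_\re)) = 0$ for all $i \neq 0$, contradicting the hypothesis that it is nonzero for some $i > 0$. Retract irrationality then follows from the chain of implications recalled before Corollary~\ref{cor:motivic-obstruction-to-rr}.

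The argument is essentially formal once Theorem~\ref{thm:intro-arbitrary-additive-category} is in hand, so there is no significant technical obstacle. The one point requiring minor care is the identification of $\sVol_{\rF}(\{\Spec \bar K\}_\re)$; one must confirm that the degeneration formalism of Theorem~\ref{thm:intro-arbitrary-additive-category} applies to $\Spec \bar K$ itself (with $d = 1$), or equivalently that any smooth proper model of any stably rational smooth projective variety $Y_{\bar K}$ of dimension $0$ with smooth special fiber makes the \v{C}ech complex collapse to $\rF(Y_0)$ in degree $0$, where $\{Y_0\}_\re \cong \{\Spec k\}_\re$.
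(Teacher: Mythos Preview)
Your proposal is correct and matches the paper's implicit argument: the corollary is stated without proof as an immediate consequence of Theorem~\ref{thm:intro-arbitrary-additive-category}, and your formal deduction (homology descends to $K^b(\cA)$, then compose with $\sVol_{\rF}$; for the obstruction, use the trivial semistable model of $\Spec \bar K$) is exactly what is intended. The final sentence about retract irrationality is superfluous since the corollary only asserts failure of universal $\rR$-triviality, but this is harmless.
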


	\begin{example}[The dual complex]
		A simple example of a stable birational invariant is the constant functor which sends each smooth, projective variety over $k$ to the abelian group $\bZ$, and each morphism to the identity.
		In this case, Theorem~\ref{thm:intro-categorical-motivic-volume} outputs the chain complex computing the simplicial homology of the dual complex of a semistable model, and homotopy invariance  is well-known.
		Indeed, the homotopy type of the dual complex itself does not depend on the choice of semistable model, cf. \cite{danilov}, \cite{ks}, \cite{stepanov}, \cite{thuillier}, \cite{arapura-et-al}, \cite{payne}.
		The dual complex itself is not useful for applications to the rationality problem; it is contractible if $X_{\bar K}$ is rationally connected, cf. \cite{DFKX}.
	\end{example}

    \begin{example}[Unramified cohomology]
        If $\rF = \rH^i_{nr}(k(-), \bm{\mu}_m)$, then the cochain complex from Theorem~\ref{thm:intro-arbitrary-additive-category} is closely related to the specialization method pioneered by Voisin \cite{VoisinQDS} and Colliot-Th\'el\`ene--Pirutka \cite{ct-pirutka}.
        For example, if $X_{\bar K}$ has a degeneration $X_0$ with isolated nodes, then it is not difficult to check that when a resolution $\tilde X_0$ has nontrivial unramified cohomology in degree $i > 0$, then
        \begin{equation*}
            \rH^0(\sVol_{\rF}(X_{\bar K})) \neq 0.
        \end{equation*}
        But Corollary~\ref{cor:cohomology-groups} yields only that $X_{\bar K}$ is not universally $\rR$-trivial, whereas the usual specialization method gives the stronger conclusion that $X_{\bar K}$ does not have a decomposition of the diagonal.
        We do not know if, for this particular choice of $\rF$ but arbitrary $X_{\bar K}$, the homotopy type of $\sVol_{\rF}(X_{\bar K})$ obstructs the existence of a decomposition of diagonal on $X_{\bar K}$.
    \end{example}

    \begin{remark}[Open varieties]
        There is also a variant of Theorem~\ref{thm:intro-arbitrary-additive-category} for smooth, separated varieties over $k$, which takes the form of a functor
        \begin{equation}
            \sVol_{\rF} : \SmSep_k^{\mathrm{prop}} \to K^b(\cA),
        \end{equation}
        where the domain is smooth, separated varieties with proper morphisms, such that:
        Given a smooth separated variety $U$ over $k$ and a smooth compactification $U \subset X$ with snc boundary $D = X - U$, $\sVol_{\rF}(U)$ is homotopic to the \v{C}ech complex obtained by applying $\rF$ to the strata of $D$.
        The resulting chain complex is reminiscent of the weight complexes from \cite{gillet-soule}.
        We do not treat this variant explicitly in the body of the paper, but the proof is almost identical to that of Theorem~\ref{thm:existence-of-functor}.
    \end{remark}


	\subsection{Organization of the paper}
    In \S \ref{sec:toroidal-embeddings}, we review the theory of toroidal embeddings and cone complexes used throughout the paper. 
    In \S \ref{sec:stable_birational_invariants}, we discuss functorial stable birational invariants, and describe how to apply them to singular varieties and the strata of toroidal embeddings. 
    In \S \ref{sec:chain-complexes}, we define the chain complexes involved in the statement and proof of Theorem~\ref{thm:intro-arbitrary-additive-category}.
    In \S \ref{sec:star-subdivision}, we show that the homotopy type of the chain complex does not change under a star subdivision of a toroidal embedding, and in \S \ref{sec:blowing-up}, we prove the same for the blowup of an snc pair $(X, D)$ along a subvariety having simple normal crossings with $D$.
    In \S \ref{sec:the-categorical-motivic-volume}, we prove Theorem~\ref{thm:intro-categorical-motivic-volume}, Corollary~\ref{cor:intro-re-motivic-volume}, and Theorem~\ref{thm:intro-arbitrary-additive-category}.
    Finally, in \S \ref{sec:spec-and-var}, we record two basic results about the variation of $\rR$-equivalence classes in families.

	\subsection{Notation and conventions}
    Throughout, $k$ is an algebraically closed field of characteristic $0$, and $R = k \llbracket t \rrbracket$. 
    A \emph{toroidal embedding} over $R$ or $k$ always refers to a toroidal embedding without self-intersection.
		
	\subsection{Acknowledgements}
	We thank Aravind Asok, Dori Bejleri, Nathan Chen, Kristin DeVleming, Johan de Jong, J\'anos Koll\'ar, Daniel Litt, John Ottem,  Alex Perry, Stefan Schreieder, Evgeny Shinder, and Burt Totaro for helpful discussions related to this work. 
    We are particularly grateful to Stefan Schreieder for his comments on a draft and a valuable correspondence.

	During the preparation of this paper, the first author was partially supported by NSF grant DMS-2401818.
	Part of this work was also completed while the first author was in residence at the Simons Laufer Mathematical Sciences Institute in Spring 2024, under the support of NSF grant DMS-1928930 and Sloan Foundation grant G-2021-16778.

\section{Toroidal embeddings}
\label{sec:toroidal-embeddings}

While the results described in the introduction concern only semistable models, we develop most of our results in the greater generality of \emph{toroidal embeddings}.
Let us briefly explain the reason. Looking ahead to the proofs of Theorem~\ref{thm:intro-categorical-motivic-volume} or Theorem~\ref{thm:intro-arbitrary-additive-category}, we must compare chain complexes associated to, say, two semistable models of a given $X_{\bar K}$ over $k \llbracket t^{1/d} \rrbracket$ and $k \llbracket t^{1/e} \rrbracket$. 
If $d = e$, the two models are birational, and the comparison is made by resolving indeterminacy and using weak factorization.
When $d \neq e$, then it would be convenient to perform a base change to a common base $k \llbracket t^{1/d'} \rrbracket$ and run the same argument. 
However, the base change introduces toroidal singularities, and to deal with them, we allow such singularities in our constructions from the beginning.

In this section, we briefly review the language of toroidal embeddings used throughout the paper. 
The classic reference is \cite{toroidal-embeddings}, while a detailed overview may be found in \cite{functorial-res}.
A different approach would have been to use the language of smooth log schemes with Zariski-local charts from \cite{kato}, 
as in \cite{NicShin19}. 
The only feature of either theory which we really need is the language of cone complexes, and toroidal embeddings provide a more direct route.

\subsection{Toroidal embeddings}

Throughout, let $k$ be an algebraically closed field of characteristic $0$, and let $R = k\llbracket t \rrbracket$.
We write $0, \eta \in \Spec R$ for the closed and generic points, respectively.
Let $\rT = \bG^n_{m, R}$ be a split torus over $R$. 
A \emph{torus embedding} over $R$ is an open immersion $\rT_{\eta} \subset X$, where $X$ is irreducible, normal, and of finite type over $R$, such that the action of $\rT_{\eta}$ on itself by translation extends to an action of $\rT$ on $X$.

\begin{example}[Affine torus embeddings]
\label{ex:affine-torus-emb}
    Let $M$ and $N$ be the character and cocharacter groups of $\rT$. There is a correspondence between affine torus embeddings $\rT_{\eta} \subset X_{\sigma}$ and maximum-dimensional rational polyhedral cones $\sigma \subset N_{\bR} \times \bR_{\geq 0}$ not contained in $\rN_{\bR} \times \{0\}$.
    The correspondence is given by setting 
    \[
        X_{\sigma} = \Spec R[\dots, x^m t^k, \dots], \quad (m, k) \in (M \times \bZ) \cap \sigma^\vee ,
    \]
    where $\sigma^\vee$ is the dual cone.
    The orbits of $\rT_{\eta}$ contained in $(X_{\sigma})_0$ correspond to the faces of $\sigma$ contained in $N_{\bR} \times \{0\}$, while the orbits of $\rT_0$ in $(X_{\sigma})_0$ correspond to the faces of $\sigma$ not contained in $N_{\bR} \times \{0\}$.
\end{example}

A \emph{toroidal embedding without self-intersection} over $R$ (or, for simplicity, just \emph{toroidal embedding}, cf. Remark~\ref{rem:self-intersection}) consists
of an open immersion $U \subset X$, where $X$ is an irreducible, normal scheme of finite type over $R$, and $U \subset X_{\eta}$ is an open subset, 
such that the following condition holds: 
For each closed point $x \in X$, there is an open neighborhood $U_x$ containing $x$, a torus embedding $\rT_{\eta} \subset Z$ over $R$, and an \'etale morphism 
\[
    \pi:U_x \to Z
\]
such that $U \cap U_x = \pi^{-1}(\rT_{\eta})$. 
The data $(U_x, \pi\col U_x \to Z)$ is called a \emph{toric chart} at $x$.

A \emph{toroidal embedding without self-intersection} over $k$ is defined similarly, but where a torus embedding over $R$ is replaced by a torus embedding over $k$ (i.e., an embedding of $\bG_{m, k}^r$ into a toric variety over $k$).

\begin{convention}
    We typically denote a toroidal embedding $U \subset X$ as a pair $(X, D)$, where $D \subset X$ is the complement of $U$ with its reduced structure.
\end{convention}

\begin{example}[Simple normal crossings]
    An \emph{snc pair} over $R$ or $k$ is a toroidal embedding $(X, D)$ over $R$ or $k$ such that $X$ is regular.
    In that case, $D$ is a simple normal crossings subscheme.
    Indeed, to prove this, one may reduce to the case of an affine torus embedding, where (in the notation of Example~\ref{ex:affine-torus-emb}) the claim is that $X_{\sigma}$ is regular if and only if the primitive integral vectors in the rays of the $\sigma$ generate the lattice $M \times \bZ$.
\end{example}

\begin{remark}[Self-intersection]
\label{rem:self-intersection}
If $U_x$ above is permitted to be an \'etale neighborhood, as opposed to a Zariski neighborhood, then $U \subset X$ could have self-intersection.
We shall never consider toroidal embeddings with self-intersection; by convention, a \emph{toroidal embedding} refers to a toroidal embedding without self-intersection.
\end{remark}

If $(X, D)$ is a toroidal embedding over $R$ or $k$, then $D$ has pure codimension $1$. A \emph{stratum} (which, by convention, refers to a \emph{closed stratum}) is a connected component of 
\[
D_I := \bigcap_{i \in I} D_i,
\]
where $D = D_0 + \cdots + D_n$, and $I \subset \{0, \dots, n\}$.
We shall occassionally refer to an \emph{open stratum}, which is the open subset of a stratum obtained by removing all proper substrata.
Note that the open strata are regular, or in other words, the singular locus of a stratum is contained in proper substrata.
The embedding of any open stratum into its closure is a toroidal embedding, either over $R$ or $k$.

Let $\rP(X, D)$ be the poset of strata, with the partial ordering \emph{opposite} to the partial ordering by inclusion. 
We regard $\rP(X, D)$ as the set which indexes the strata, 
so that for $\sigma \in \rP(X, D)$, the corresponding stratum is denoted $D_{\sigma}$, 
and the partial ordering is given by $\sigma \leq \tau$ whenever $D_{\sigma} \supset D_{\tau}$. 
Furthermore, given a stratum $D_{\sigma}$, we write $D_{\sigma}^\circ$ for the open stratum it contains.

\begin{construction}[The cone associated to a stratum]
\label{constr:cone-of-stratum}
    Let $(X, D)$ be a toroidal embedding over $R$ or $k$, and fix a stratum $\sigma \in \rP(X, D)$. 
    Associated to $\sigma$ is a strongly convex, rational polyhedral cone, which we denote by $|\sigma|$, defined as follows:
    \begin{itemize}
        \item Let $\Star(D_{\sigma}) \subset X$ be the union of $X \setminus D$ with all of the open strata $D_{\tau}^\circ$ whose closures contain $D_{\sigma}$. 
        We note that $\Star(D_{\sigma})$ is an open subscheme of $X$.
        \item Let $M_{\sigma}$ be the lattice of Cartier divisors on $\Star(D_{\sigma})$ set-theoretically supported on $D \cap \Star(D_{\sigma})$, and let $M_{\sigma}^+ \subset M_{\sigma}$ to be the submonoid of effective Cartier divisors.
        \item Let $N_{\sigma} = \Hom(M_{\sigma}, \bZ)$ to be the dual lattice.
    \end{itemize}
    Finally, we define
    \[
        |\sigma| = \{x \in N_{\sigma} \otimes {\bR} \mid \langle m, x \rangle \geq 0,  \forall m \in M_{\sigma}^+ \}.
    \]
    The dimension of the cone $|\sigma|$ is the codimension of $D_{\sigma}$ in $X$.
\end{construction}

As $\sigma$ ranges over $\rP(X, D)$, the cones $|\sigma|$ glue to form a \emph{cone complex} $\Sigma = \Sigma(X, D)$, which is a topological space $|\Sigma|$ presented as the union of a poset of strongly convex, rational polyhedral cones, glued along inclusions of faces.
The cone complex $\Sigma$ of the toroidal embedding $(X, D)$ plays the role of the fan of a toric variety; the main difference is that a toric fan comes with an embedding into an ambient Euclidean space, whereas a cone complex does not.

By construction, the cones in $|\Sigma|$ are in bijection with the elements of $\rP(X, D)$, and the partial ordering on $\rP(X, D)$ is corresponds to the ordering of the cones of $\Sigma$ by inclusion.
Given that $\Sigma(X, D)$ determines $\rP(X, D)$, it may seem excessive to introduce separate notation for $\rP(X, D)$.
However, $\rP(X, D)$ is functorial in a more general situation than $\Sigma(X, D)$.

\subsection{Categories of toroidal embeddings}

For $d > 0$, let $R_d = k \llbracket t^{1/d} \rrbracket$.
Of course, $R_d$ is abstractly isomorphic to $R$, and our discussion of toroidal embeddings over $R$ carries over to toroidal embeddings over $R_d$. 

Given a smooth, projective variety $X_{\bar K}$ over the field of Puiseux series $\bar K$, as in the introduction, the semistable reduction theorem from \cite{toroidal-embeddings} guarantees the existence of a semistable model over $R_d$ for some $d > 0$ with geometric generic fiber $X_{\bar K}$.
We ultimately need a category which allows us to compare different semistable models over $R_d$ and $R_{d'}$, say. 

\begin{definition}[Models]
\label{def:models}
    A \emph{toroidal model} over $R_d$ is a toroidal embedding $(X, D)$ over $R_d$ such that $X$ is proper over $R_d$ and $D$ coincides with the reduced subscheme underlying the special fiber $X_0$. 
    A toroidal model $(X, D)$ is an \emph{snc model} if $X$ is regular; in this case, $D$ is a simple normal crossings subscheme. 
    Finally, a toroidal model $(X, D)$ is \emph{semistable} if it is an snc model and the special fiber $X_0$ is reduced (so that $X_0 = D$).
\end{definition}

\begin{definition}[The category of models]
\label{def:cat-of-models}
     We write $\TorMod_\infty$ for the category whose objects are  toroidal models over $R_d$ for each $d > 0$, and whose morphisms are commutative diagrams
    \[
        \begin{tikzcd}[column sep=large]
            (Y, E) \ar[d] \ar[r, "f"] & (X, D) \ar[d] \\
            \Spec R_{de} \ar[r, "t^{1/de} \mapsto t^{1/d}"] & \Spec R_d,
        \end{tikzcd}
    \]
    where $f$ is a proper morphism.
\end{definition}

\subsection{Functoriality of strata}

In this section, we show that $\rP(X, D)$ is functorial in $(X, D)$.
To discuss functoriality, we first need to address what is meant by a morphism between toroidal embeddings. 
There are several possible notions, but the most convenient is the following:

\begin{definition}
    A \emph{morphism} $f\col (Y, E) \to (X, D)$ between toroidal embeddings over $R$ (resp., $k$) is a morphism between the underlying $R$-schemes (resp., $k$-schemes) $f\col X \to Y$ such that $f(E) \subset D$ and $f(Y - E) \subset X - D$.
\end{definition}

\begin{definition}
\label{def:span}
    \begin{enumerate}
        \item Let $(X, D)$ be a toroidal embedding over $R$ or $k$. 
    Given a closed subset $Z \subset X$ contained in $D$, $\Span(Z)$ is defined to be the smallest stratum of $(X, D)$ containing $Z$.
        \item Suppose that $f\col (X, D) \to (Y, E)$ be a proper morphism between toroidal embeddings over $k$, or a morphism in $\TorMod_{\infty}$. 
    Then we define a map of posets 
    \[
        f_*:\rP(X, D) \to \rP(Y, E)
    \]
    by sending $\sigma$ to $\Span(f(D_{\sigma}))$.
    \end{enumerate}
\end{definition}

\begin{lemma}
\label{lemma:functoriality_of_strata}
    Given a pair of composable, proper morphisms $f, g$ between toroidal embeddings over $R$ or $k$, or a morphism in $\TorMod_{\infty}$, we have $f_* \circ g_* = (f \circ g)_*$.
\end{lemma}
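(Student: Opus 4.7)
The plan is to fix $\sigma \in \rP(X, D)$, set $\tau := g_*(\sigma) \in \rP(Y, E)$ and $\rho := f_*(\tau) \in \rP(Z, F)$, and verify the equality $\Span(f(g(D_\sigma))) = D_\rho$ of strata in $\rP(Z, F)$, which is exactly the statement $f_*(g_*(\sigma)) = (f \circ g)_*(\sigma)$ after unwinding definitions.

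The easy direction is immediate: from $g(D_\sigma) \subset D_\tau$ and $f(D_\tau) \subset D_\rho$, one obtains $f(g(D_\sigma)) \subset D_\rho$, so the stratum $\Span(f(g(D_\sigma)))$ is either $D_\rho$ itself or a proper substratum, in which case it is contained in the boundary $D_\rho \setminus D_\rho^\circ$. It therefore suffices to produce a point of $f(g(D_\sigma))$ lying in the open stratum $D_\rho^\circ$.

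The main input is the following general fact about morphisms of toroidal embeddings: for any such morphism $h\col (A, B) \to (A', B')$ and any $\alpha \in \rP(A, B)$, the image of the open stratum is contained in the open stratum of $h_*(\alpha)$, i.e.\ $h(D_\alpha^\circ) \subset D_{h_*(\alpha)}^\circ$. Granting this, applying it first to $g$ and then to $f$ gives the chain of inclusions $f(g(D_\sigma^\circ)) \subset f(D_\tau^\circ) \subset D_\rho^\circ$. Since $D_\sigma^\circ$ is nonempty, so is the left-hand side, producing the required point inside $f(g(D_\sigma)) \cap D_\rho^\circ$.

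The main obstacle is establishing the displayed fact. The plan is to prove it locally: at any point of $D_\alpha^\circ$ and its image in $A'$, one has toric charts in which $h$ is \'etale-locally pulled back from an equivariant morphism of affine toric varieties. In that setting, $D_\alpha^\circ$ corresponds to a single torus orbit and an equivariant morphism of toric varieties carries each torus orbit into a unique torus orbit of the target; passing back to the toroidal setting and using the connectedness of $D_\alpha^\circ$ (which holds since we consider toroidal embeddings without self-intersection) yields the claim. This is essentially the combinatorial content of the correspondence between toroidal morphisms and morphisms of cone complexes recorded in \cite{toroidal-embeddings}.
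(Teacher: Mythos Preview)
Your reduction to the claim $h(D_\alpha^\circ) \subset D_{h_*(\alpha)}^\circ$ is correct, and once this claim is in hand the lemma follows as you describe. In fact, this claim is equivalent to the auxiliary statement the paper proves, namely $\Span(f(\Span(Z))) = \Span(f(Z))$ for every irreducible closed $Z \subset E$: apply the latter to $Z = \overline{\{x\}}$ for $x \in D_\alpha^\circ$ to get one direction, and conversely apply your claim at the generic point of $Z$ to get the other.

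The gap is in your justification of the key claim. You assert that at any point of $D_\alpha^\circ$ the morphism $h$ is \'etale-locally pulled back from an equivariant morphism of affine toric varieties. But the paper's notion of morphism between toroidal embeddings is only that of a morphism of pairs: $f(E) \subset D$ and $f(Y \setminus E) \subset X \setminus D$. Nothing in this definition provides compatible toric charts on source and target, and \cite{toroidal-embeddings} does not establish such a statement for arbitrary morphisms of pairs (that would be the strictly stronger notion of a \emph{toroidal morphism}). So the reduction to torus orbits under equivariant maps is not available here.

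The paper's argument sidesteps this by working directly with Cartier divisors rather than toric charts. After restricting over the open stratum inside $\Span(f(Z))$, so that the components $D_1, \dots, D_k$ cutting out $\Span(f(Z))$ are Cartier, each pullback $f^{-1}(D_j)$ is an effective Cartier divisor on the normal scheme $Y$ supported in $E$, hence a sum of irreducible components of $E$. Choosing for each $j$ a component $E_j \supset Z$ yields $\Span(Z) \subset \bigcap_j E_j \subset f^{-1}(\bigcap_j D_j)$, which is exactly \eqref{eq:preimage-of-span}. This uses only normality of $Y$ and the set-theoretic equality $f^{-1}(D) = E$, not any local toric structure of $f$. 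You can repair your argument by replacing the toric-chart paragraph with this Cartier-divisor computation.
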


\begin{proof} 
    The proof is identical in any of the above cases.
    It suffices to prove the following stronger claim:
    If $f\col (Y, E) \to (X, D)$ is a morphism, and $Z \subset E$ is an irreducible closed subset, then 
    \begin{equation*}
        \Span(f(\Span(Z))) = \Span(f(Z)).
    \end{equation*}
    The inclusion $\supset$ is clear. 
    To conclude, it is enough to show that
    \begin{equation}
        \label{eq:preimage-of-span}
        \Span(Z) \subset f^{-1}(\Span(f(Z))).
    \end{equation}

    To prove it, we are allowed to restrict over the open stratum inside $\Span(f(Z))$. In particular, we may assume that $X$ is regular, so that the irreducible components of $D$ are effective Cartier divisors. 
    Write $\Span(f(Z))$ as the intersection of $D_1, \dots, D_k$.
    The intersection 
    \begin{equation*}
        f^{-1}(D_1 \cap \dots \cap D_k) = f^{-1}(D_1) \cap \dots \cap f^{-1}(D_k)
    \end{equation*}
    is nonempty, since it contains $Z$.
    Each $f^{-1}(D_j)$ is a sum of irreducible components of $E \subset Y$, at least one of which (say, $E_j$) contains $Z$. 
    The inclusion
    \[
        Z \subset E_0 \cap \dots \cap E_k ,
    \]
    induces an inclusion $\Span(Z) \subset \bigcap E_i$. 
    The right-hand side is contained in $f^{-1}(\Span(Z))$ by construction, which proves \eqref{eq:preimage-of-span}.
\end{proof}

\subsection{Toroidal modifications}
In this section, we consider a far stronger notion of morphism between toroidal embeddings.
A \emph{subdivision} of cone complexes $\Sigma' \to \Sigma$ is a homeomorphism of the underlying topological spaces $|\Sigma'| \to |\Sigma|$ such that any cone in $\Sigma$ is a finite union of cones in $\Sigma'$. 
For example, if $\Sigma$ is the fan associated to a torus embedding $\rT \subset X$ over $R$ or $k$, then any subdivision determines a proper, birational, equivariant map $X' \to X$ of torus embeddings.

If $(X, D)$ is a toroidal embedding over $R$ or $k$, then a \emph{toroidal modification} is a proper, birational morphism of toroidal embeddings $f\col(X', D') \to (X, D)$ such that, locally on $X$, $f$ is the pullback along a toric chart of a subdivision of torus embeddings over $R$ or $k$, respectively.
If $(X, D)$ is a given toroidal embedding, there is a bijection between toroidal modifications $(X', D') \to (X, D)$ and subdivisions of the cone complex $\Sigma$ associated to $(X, D)$.

\begin{lemma}
\label{lem:fibers-of-toroidal-resolution}
    Let $f\col (X', D') \to (X, D)$ be a toroidal modification over $R$ or $k$.
    \begin{enumerate} 
        \item \label{item:proj-bundle-stratum} 
        Given $\sigma' \in \rP(X', D')$, the image of $D'_{\sigma'}$ in $(X', D')$ is a stratum $D_\sigma$ in $(X, D)$, and the induced map $D'_{\sigma'} \to D_{\sigma}$
        is birationally a projective bundle (i.e., the induced extension of function fields is purely transcendental).
        \item \label{item:preimage-stratum}
        Given $\sigma \in \rP(X, D)$, any irreducible component $Z$ of $f^{-1}(D_{\sigma})$ is a stratum. 
    \end{enumerate}
\end{lemma}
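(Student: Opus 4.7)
The plan is to check both statements étale-locally on $X$, reducing to a computation for a subdivision of toric fans. Fix a closed point $x \in X$ and a toric chart $\pi \colon U_x \to Z$, where $\rT_\eta \subset Z$ is an affine torus embedding corresponding to a single strongly convex rational polyhedral cone $\sigma_Z$ (as in Example~\ref{ex:affine-torus-emb} if we are over $R$, or the analogous toric picture over $k$). Since $f$ is a toroidal modification, after shrinking $U_x$ one may arrange that $f^{-1}(U_x) \to U_x$ is the pullback along $\pi$ of the equivariant toric morphism $Z' \to Z$ associated to a subdivision $\Sigma'$ of $\sigma_Z$. Because the toroidal embedding has no self-intersection, the strata of $(U_x, D \cap U_x)$ passing through $x$ correspond bijectively to faces of $\sigma_Z$ (equivalently, to $\rT$-orbit closures $V(\tau) \subset Z$ containing $\pi(x)$), and similarly for $(f^{-1}(U_x), D' \cap f^{-1}(U_x))$ and cones of $\Sigma'$. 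This reduces both claims to standard facts about toric morphisms.

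For \eqref{item:proj-bundle-stratum}, I would work near a general point of $D'_{\sigma'}$ so that $D'_{\sigma'}$ is cut out locally by the orbit closure $V(\tau') \subset Z'$ of some cone $\tau' \in \Sigma'$. Let $\tau$ be the unique cone of $\sigma_Z$ whose relative interior contains $\mathrm{relint}(\tau')$. The standard description of the toric morphism $Z' \to Z$ identifies the image of $V(\tau')$ with $V(\tau)$, and identifies $V(\tau') \to V(\tau)$ with the equivariant toric morphism attached to the surjection of lattices $N/N_{\tau'} \twoheadrightarrow N/N_{\tau}$, where $N_\rho := \mathrm{span}(\rho) \cap N$. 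Translating back, $f(D'_{\sigma'})$ is the stratum $D_\sigma$ of $(X,D)$ labelled by $\tau$, and this stratum coincides with $D_{f_*(\sigma')}$ by the definition of $f_*$. The generic fiber of $V(\tau') \to V(\tau)$ is the kernel torus, of rank $\dim \tau - \dim \tau'$, so the induced extension of function fields is purely transcendental, yielding the birational projective bundle statement. For \eqref{item:preimage-stratum}, I would compute the preimage in the local toric model: as a set, $f^{-1}(V(\sigma))$ in $Z'$ is the union of the orbit closures $V(\tau')$ over all $\tau' \in \Sigma'$ whose unique containing cone in $\sigma_Z$ contains $\sigma$, and the irreducible components are the $V(\tau')$ for those $\tau'$ minimal in this collection; each such $V(\tau')$ is an orbit closure and so corresponds, in the chart, to a stratum of $(X', D')$.

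The main obstacle I anticipate is the global consistency of the local descriptions. Since $\pi$ is only étale, one must verify that global strata do not split into several orbit-closure components within a chart, and that the locally described image $f(D'_{\sigma'})$ and the locally described irreducible components of $f^{-1}(D_\sigma)$ assemble into single global strata rather than unions of them. The first point is built into the no self-intersection hypothesis in the definition of a toroidal embedding; the second follows because each $D'_{\sigma'}$ is irreducible, and the functorial map $f_* \colon \rP(X', D') \to \rP(X, D)$ of Lemma~\ref{lemma:functoriality_of_strata} guarantees that the local orbit-closure images glue consistently across different choices of toric chart.
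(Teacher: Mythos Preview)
Your proposal is correct and follows essentially the same route as the paper: both reduce to the local toric model via charts and then invoke the standard correspondence between cones and orbit closures, together with the description of the induced map on orbits as a quotient of tori. Your treatment is in fact slightly more careful than the paper's about the global consistency of the local descriptions (the paper simply asserts the reduction without comment), and your phrasing via the surjection $N/N_{\tau'} \twoheadrightarrow N/N_{\tau}$ is equivalent to the paper's check that $N_{\tau'} \to N_{\tau}$ has torsion-free cokernel --- both hinge on the fact that a subdivision keeps the ambient lattice $N$ fixed.
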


\begin{proof}
    The statements are local on the target, and they ultimately reduce to case of proper, birational equivariant morphisms between torus embeddings.
    In that case, the statements are standard, but as they are used extensively in what follows, we recall them for the reader's convenience.
    
    If $\varphi:\Sigma' \to \Sigma$ is a morphism of fans, then the corresponding morphism of torus embeddings, say $f:X' \to X$, is given as follows: 
    By definition, for each $\sigma' \in \Sigma'$, $\varphi(\sigma')$ is contained in a unique, minimal cone $\sigma \in \Sigma$.
    Then $f$ is given by gluing the affine toric morphisms
    \begin{equation}
    \label{eq:induced-map-orbits}
        \rT'_{\sigma'} \to \rT_{\sigma},
    \end{equation}
    where $\rT'_{\sigma'}$ and $\rT_{\sigma}$ are the orbits corresponding to $\sigma'$ and $\sigma$, respectively.
    It follows directly from the construction (or, alternatively, from the equivariance of $f$) that the image of an orbit is an orbit (proving the first part of Item~\ref{item:proj-bundle-stratum}), and that the preimage of an orbit is a finite union of orbits (proving Item~\ref{item:preimage-stratum}).

    If $\varphi$ is moreover a subdivision, and $\varphi(\sigma') = \sigma$ as above, then the induced map \eqref{eq:induced-map-orbits} is a torsor under a subgroup of a torus. It is a $\bG_m^r$-torsor (either over $k$ or the fraction field of $R$, depending on the situation) for some $r \geq 0$ if and only if the induced map of character lattices
    \[
        N_{\sigma'} \to N_{\sigma}   
    \]
    has torsion-free cokernel. 
    But this is immediate from the definition of a subdivision of a fan: Both $\Sigma'$ and $\Sigma$ are fans in $N_{\bR}$, for the same lattice $N$.
\end{proof}

\subsection{Star subdivision}
In this section, we describe the most important example of a toroidal modification, which is a star subdivision.
Let $\Sigma$ be a cone complex, and let $\sigma \in \Sigma$.
    \begin{enumerate} 
        \item The \emph{star} of $\sigma$ is the set $\Star(\sigma) = \{\tau \mid \sigma \leq \tau\} \subset \rP(X, D)$.
        \item The \emph{closed star} of $\sigma$ is set $\cStar(\sigma)$ of all $\tau$ such that $\tau \leq \tau'$, for some $\tau' \in \Star(\sigma)$. 
        \item The \emph{link} of $\sigma$ is the set $\Link(\sigma) = \cStar(\sigma) \setminus \Star(\sigma)$.
     \end{enumerate} 

Fix a primitive, integral vector $v$ in the interior of $|\sigma|$.
For each $\tau \in \Link(\sigma)$, and each maximal cone $\tau'$ in $\Star(\sigma)$ containing $\tau$ as a face,
we write
\[
    \join_{\tau'}(v, \tau) 
\]
for the cone $v + \tau$ in $N_{\tau'} \otimes \bR$. 
Then the star subdivision $\Sigma' \to \Sigma$ is given by setting
\[
    \Sigma' = (\Sigma \setminus \Star(\sigma)) \cup \{\join_{\tau'}(v, \tau) \} \cup \{v\},
\]
as $\tau$ runs over $\Star(\sigma)$, and for fixed $\tau$, $\tau'$ runs over the maximal cones in $\Star(\sigma)$ containing $\tau$ as a face.

If $(X, D)$ is a toroidal embedding over $R$ or $k$ with cone complex $\Sigma$, then the star subdivision $\Sigma' \to \Sigma$ corresponds to a toroidal modification
\[
   f\col (X', D') \to (X, D),
\]
which is an isomorphism away from $B_{\sigma}$.
We often abuse terminology and refer to $f$ itself as a star subdivision.

For a given $\sigma$, there is a standard choice of primitive vector:
If $\{v_0, \dots, v_n\}$ are the primitive vectors in the $1$-dimensional faces of $\sigma$,
then \emph{standard star subdivision} is the star subdivision associated to $v$, which we take to be the \emph{barycenter} of $\sigma$, i.e., the primitive generator of the ray through $v_0 + \cdots + v_n$. 
For example, if $(X, D)$ is an snc pair over $R$ or $k$, then the standard star subdivision at a stratum $\sigma$ corresponds to the blowup of $X$ at $D_{\sigma}$.

\begin{theorem}[Toroidal resolution]
\label{thm:toroidal-resolution-of-singularities}
    Let $(X, D)$ be a toroidal embedding over $R$ or $k$. There exists a toroidal modification
    \[
        f : (X', D') \to (X, D)
    \]
    so that $(X', D')$ is an snc pair.
\end{theorem}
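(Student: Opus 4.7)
The plan is to translate the statement into a combinatorial claim about cone complexes, and then invoke the classical toric resolution of singularities. By the correspondence between toroidal modifications of $(X,D)$ and subdivisions of $\Sigma = \Sigma(X,D)$ recalled in the excerpt, and by the characterization (mentioned when discussing affine torus embeddings) that an affine torus embedding $X_\sigma$ is regular iff the primitive integral generators of the rays of $\sigma$ generate the ambient lattice, it suffices to prove the following: every cone complex $\Sigma$ admits a subdivision $\Sigma'$ all of whose cones are \emph{regular}, meaning each cone's primitive integral generators form part of a $\mathbb{Z}$-basis of the ambient lattice.

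I would proceed in two stages, each by induction, and each implemented via star subdivisions so that compatibility along shared faces is automatic (since a star subdivision as defined in the excerpt produces a globally coherent subdivision of the whole cone complex, affecting only the closed star of the chosen cone). First, I would reduce to the simplicial case: among the non-simplicial cones of $\Sigma$, pick one of maximal dimension, say $\sigma$, and perform the standard star subdivision at $\sigma$ (i.e., at the barycenter of $\sigma$). Each new maximal cone of the subdivision replacing $\sigma$ is the join of the new ray with a proper face of $\sigma$, so it has strictly fewer rays than $\sigma$, and the link of $\sigma$ is unaffected. Iterating, this terminates with a simplicial subdivision of $\Sigma$.

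Second, assuming $\Sigma$ is simplicial, I would reduce the multiplicities to $1$ in the standard Kempf--Knudsen--Mumford--Saint-Donat manner. For a simplicial cone $\sigma$ with primitive ray generators $v_1, \ldots, v_r$, set
\[
    m_\sigma \;=\; [\, N_\sigma \cap \mathrm{span}_\mathbb{R}(\sigma) \,:\, \mathbb{Z}v_1 + \cdots + \mathbb{Z}v_r \,].
\]
If $m_\sigma > 1$, there exists a lattice point $v \in \sigma \cap N_\sigma$ of the form $\sum a_i v_i$ with $0 \leq a_i < 1$ and not all zero; performing the star subdivision at the ray through $v$ replaces $\sigma$ by simplicial cones of strictly smaller multiplicity. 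Induct on $\max_\sigma m_\sigma$ and, for fixed maximum multiplicity, on the number of cones achieving it.

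The main obstacle is termination of the second stage, for which the classical argument (recorded, e.g., in \cite{toroidal-embeddings}) applies verbatim cone-by-cone since both the simplicial property and the multiplicity invariant depend only on the combinatorics of individual cones together with their ambient lattices $N_\sigma$. Once termination is in hand, the resulting subdivision is made up entirely of regular cones and corresponds, via the dictionary above, to a toroidal modification $f\colon (X', D') \to (X, D)$ with $(X', D')$ an snc pair, as desired.
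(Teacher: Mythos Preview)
Your proposal is correct and follows the same approach as the paper: the paper simply remarks that the construction of $X'$ is identical to the classical toric resolution (citing \cite{fulton}), with the toric fan replaced by the cone complex, and your two-stage star-subdivision argument is precisely a sketch of that construction. In fact you provide more detail than the paper, which defers entirely to the literature.
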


The construction of $X'$ is identical to the construction of a toric resolution, e.g., from \cite{fulton}, but with the toric fan replaced by the cone complex.

\subsection{Barycentric subdivision}

Let $(X, D)$ be a toroidal embedding over $R$ or $k$, and let $N$ be the largest codimension of a stratum in $(X, D)$. 
Then we may form a sequence of toroidal modifications
\begin{equation}
\label{eq:barycentric-sub-sequence}
    \begin{tikzcd}[column sep=small]
        (Y, E) = (Y^0, E^0) \ar[r] & (Y^1, E^1) \ar[r] & \cdots \ar[r] & (Y^N, E^N) \ar[r] & (Y^{N + 1}, E^{N + 1}) = (X, D),
    \end{tikzcd}
\end{equation}
where, for each $\ell \geq 0$, the morphism $Y^{\ell + 1} \to Y^\ell$ is defined inductively as follows:
If $\sigma_0, \dots, \sigma_k$ are the strata of codimension $\ell$ in $(X, D)$, and $\sigma_0^\ell, \dots, \sigma_k^\ell$ are their strict transforms in $Y^{\ell}$, then 
\[
    Y^{\ell + 1} \to Y^\ell
\]
is the composition of the iterated standard star subdivisions at $\sigma^\ell_0, \dots, \sigma^\ell_k$.

Recall that a strongly convex, rational polyhedral cone is simplicial if it is generated by linearly independent vectors.
A toroidal embedding $(X, D)$ with cone complex $\Sigma$ is \emph{simplicial} if the cones in $\Sigma$ are simplicial.

\begin{lemma}
\label{lem:barycentric-subdivision}
    Let $(Y, E) \to (X, D)$ be the barycentric subdivision of a toroidal embedding. 
    Then $(Y, E)$ is simplicial.
\end{lemma}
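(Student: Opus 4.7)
The plan is to identify the cone complex $\Sigma(Y, E)$ with the classical barycentric subdivision of $\Sigma = \Sigma(X, D)$, namely the cone complex whose cones are indexed by flags $\tau_0 < \tau_1 < \cdots < \tau_m$ of cones in $\Sigma$, with each flag corresponding to the cone $\langle v_{\tau_0}, \ldots, v_{\tau_m} \rangle$ generated by the associated barycenters. Once this identification is in hand, simpliciality reduces to verifying that the barycenters along any flag are linearly independent.

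For the linear independence, I would argue by induction on $i$: if $\tau_0 < \tau_1 < \cdots < \tau_m$ is a flag in $\Sigma$, then $v_{\tau_i}$ lies in the relative interior of $\tau_i$ and hence outside its proper face $\tau_{i-1}$, while $v_{\tau_0}, \ldots, v_{\tau_{i-1}}$ all lie in $\tau_{i-1}$ by induction. It follows that $v_{\tau_0}, \ldots, v_{\tau_m}$ are linearly independent, so the cone they generate is simplicial by definition.

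The substantive step is to verify that the iterated star subdivision in \eqref{eq:barycentric-sub-sequence} produces exactly this complex. I would proceed by descending induction on the codimension $\ell$, establishing the following invariant: after performing all star subdivisions at strata of codimension $> \ell$, every cone in the partially subdivided complex has the form $\langle \kappa, v_{\tau_1}, \ldots, v_{\tau_m} \rangle$, where $\tau_1 < \cdots < \tau_m$ is a flag in $\Sigma$ of cones of dimension $> \ell$ and $\kappa$ is a face of $\tau_1$ of dimension $\leq \ell$ (with $\kappa$ alone when $m = 0$). The base case $\ell = N$ is tautological. In the inductive step, the strict transform of a codimension-$\ell$ cone $\tau$ in $\Sigma$ is still equal to $\tau$ itself, since only cones of strictly larger dimension have been affected so far, and the local formula for star subdivision from the preceding discussion replaces each containing cone $\langle \tau, v_{\tau_1}, \ldots, v_{\tau_m} \rangle$ by the cones $\langle \kappa', v_\tau, v_{\tau_1}, \ldots, v_{\tau_m} \rangle$ for $\kappa'$ running over the proper faces of $\tau$. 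Taking $\ell = 0$ completes the identification.

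The main obstacle will be the inductive bookkeeping: one must check that the star subdivisions at distinct codimension-$\ell$ strata commute (so the final complex is independent of the order in which they are performed), and that the flag description realizes each cone of the subdivided complex exactly once. Both points should reduce, via the local nature of star subdivision, to elementary verifications on a single cone.
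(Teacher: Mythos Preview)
Your proposal is correct and essentially self-contained, whereas the paper simply defers to \cite[Lemma 4.4.3]{functorial-res} for this statement. The flag description you establish by descending induction on $\ell$ is exactly the content the paper records in the paragraph preceding Lemma~\ref{lem:barycentric-is-simplicial}, so that part of your argument aligns with what the paper already does; what you add is the linear-independence verification, which the paper outsources to the reference. One small sharpening: in your inductive step for linear independence, ``$v_{\tau_i}$ lies outside $\tau_{i-1}$'' is not quite enough on its own---you want $v_{\tau_i}$ outside the \emph{linear span} of $\tau_{i-1}$. This follows because $\tau_{i-1}$ is a face of $\tau_i$, so the linear span of $\tau_{i-1}$ meets $\tau_i$ exactly in $\tau_{i-1}$; since $v_{\tau_i}$ is in the relative interior of $\tau_i$, it cannot lie in that span.
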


\begin{proof}
    See \cite[Lemma 4.4.3]{functorial-res}.
\end{proof}

The strata of the barycentric subdivision have a classical description in terms of chains in the poset $\rP(X, D)$.
Let $\Sigma'$ be the cone complex of the barycentric subdivision. 
From the description of $(Y, E)$ as an iterated standard star subdivision, the rays $v_0, \dots, v_\ell$ of any cone $\sigma' \in \Sigma'$ are the rays through the barycenters of cones $\sigma_0, \dots, \sigma_\ell$. 
Each $v_i$ appears at some stage $Y^{r_i} \to Y^{r_{i - 1}}$ in the sequence \eqref{eq:barycentric-sub-sequence}, 
and if one orders $v_0, \dots, v_\ell$ so that $r_0 > \cdots > r_\ell$, then one may check that
\[
    \sigma_0 < \cdots < \sigma_\ell
\]
in $\rP(X, D)$.
To summarize:

\begin{lemma}
\label{lem:barycentric-is-simplicial}
    Let $f\col(Y, E) \to (X, D)$ be the barycentric subdivision of a toroidal embedding. 
        The strata of codimension $\ell + 1$ in $Y$ are in bijection with chains
        \[
            \bsigma = (\sigma_0 < \sigma_1 < \cdots < \sigma_\ell)
        \]
        in the poset $\rP(X, D)$.
        Under this bijection, the partial ordering on $\rP(Y, E)$ is given by $\bsigma \leq \bsigma'$ if $\{\sigma_0, \dots, \sigma_\ell\} \subset \{\sigma_0', \dots, \sigma'_{\ell'}\}$, and the poset morphism
        \[
            f_*:\rP(Y, E) \to \rP(X, D)
        \]
        is given by $\sigma_0 < \cdots < \sigma_\ell \mapsto \sigma_\ell$. \qed
\end{lemma}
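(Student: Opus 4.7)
The plan is to identify the cone complex $\Sigma' = \Sigma(Y, E)$ with the barycentric subdivision $\Sd(\Sigma)$ of $\Sigma = \Sigma(X, D)$, whose $k$-dimensional simplicial cones are indexed by chains of length $k + 1$ in $\rP(X, D)$. Once this identification is established, all three claims follow quickly. The codimension statement is a consequence of Construction~\ref{constr:cone-of-stratum}, since a length-$(\ell + 1)$ chain corresponds to a simplicial cone of dimension $\ell + 1$ in $\Sigma'$, hence to a stratum of codimension $\ell + 1$ in $Y$. The partial ordering on $\rP(Y, E)$ is face inclusion in $\Sigma'$, which translates to inclusion of chains as subsets. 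The description of $f_*$ follows from Lemma~\ref{lem:fibers-of-toroidal-resolution}(\ref{item:proj-bundle-stratum}): the map $f_*$ sends $\bsigma$ to the unique minimal cone of $\Sigma$ containing the relative interior of the cone of $\Sigma'$ spanned by the barycenters $v_{\sigma_0}, \dots, v_{\sigma_\ell}$, and since each $v_{\sigma_i}$ lies in the relative interior of $\sigma_i$, this minimal cone is $\sigma_\ell$.

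To identify $\Sigma'$ with $\Sd(\Sigma)$, I would induct along the sequence of standard star subdivisions \eqref{eq:barycentric-sub-sequence}. The starting point is the combinatorics of a single standard star subdivision at a cone $\tau$, as described in \S 2.5: it introduces a unique new ray $v_\tau$ through the barycenter of $\tau$ and replaces each cone of $\Star(\tau)$ by the joins $\join(v_\tau, \tau')$ for $\tau' \in \Link(\tau)$, leaving all other cones untouched. In the iterated procedure \eqref{eq:barycentric-sub-sequence}, each stratum $\sigma \in \rP(X, D)$ is subdivided exactly once (at the stage corresponding to its codimension), so it contributes exactly one ray $v_\sigma$ to $\Sigma'$, and the rays of $\Sigma'$ are in bijection with the cones of $\Sigma$.

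The inductive claim to maintain is that, at each intermediate stage in \eqref{eq:barycentric-sub-sequence}, every cone of the current cone complex has the form $\join(v_{\sigma_0}, \dots, v_{\sigma_j}, \tau)$, where $\sigma_0 < \cdots < \sigma_j < \tau$ is a chain in $\rP(X, D)$ in which $\sigma_0, \dots, \sigma_j$ are cones that have already been subdivided and $\tau$ is a (possibly absent) cone not yet subdivided. Each successive standard star subdivision performed on such a top factor $\tau$ replaces cones of this form by cones of the same form, but with $\tau$ replaced by $v_\tau$ together with a further, still unsubdivided cone above. After all stages are complete, no unsubdivided factor can remain, yielding the desired bijection between cones of $\Sigma'$ and chains in $\rP(X, D)$. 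The main obstacle is the inductive bookkeeping; the underlying content, that iterated standard star subdivisions at cone barycenters produce the barycentric subdivision, is classical.
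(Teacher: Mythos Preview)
Your proposal is correct and takes essentially the same approach as the paper. The paper treats this lemma as an immediate consequence of the paragraph preceding it (note the \qed in the statement): from the iterated star subdivision description, the rays of any cone in $\Sigma'$ are barycenters of cones $\sigma_0, \dots, \sigma_\ell$ in $\Sigma$, and ordering them by the stage at which they appear yields a chain in $\rP(X, D)$; your inductive bookkeeping simply makes this explicit.
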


\section{Stable birational invariants} 
\label{sec:stable_birational_invariants}


\subsection{Functorial invariants}

Throughout, let $k$ be an algebraically closed field of characteristic $0$, and let $\cC$ be a category. 
We write $\SmProj_k$ for the category of smooth, irreducible, projective varieties over $k$. 
We cryptically observe that $\SmProj_k$ does not contain the empty scheme.
Nothing is gained or lost by working with proper, as opposed to projective, varieties.

\begin{definition}\label{def:stabirinv}
	Let $\cC$ be a category. A \textit{stable birational invariant} valued in $\cC$ is a functor
	\[
		\rF: \SmProj_k \to \cC
	\]
	such that for all birational morphisms $\mu \col X'\ra X$, $\rF(\mu)$ is an isomorphism, 
    and for all projections $p_1\col X \times \bP^n \to X$, 
    $\rF(p_1)$ is an isomorphism.
\end{definition}

If $\rF$ only satisfies the first condition, then $\rF$ is a \emph{birational invariant}. 
Curiously, there is no difference beween the two notions:

\begin{theorem}[Kahn--Sujatha, Colliot--Th\'el\`ene]
\label{thm:colliot-thelene}
    If $\rF\col \SmProj_k \to \cC$ is a birational invariant, then $\rF$ is a stable birational invariant.
\end{theorem}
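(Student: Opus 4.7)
My plan is to reduce, by induction on $n$, to showing that for every smooth projective $X$ the projection $p \col X \times \bP^1 \to X$ is inverted by a birational invariant $\rF$. The section $s_0 \col X \to X \times \bP^1$, $x \mapsto (x, 0)$, satisfies $p \circ s_0 = \id_X$, so $\rF(p) \circ \rF(s_0) = \id_{\rF(X)}$ and $\rF(s_0)$ is automatically a right inverse of $\rF(p)$. The substance of the theorem is thus to prove the reverse identity
\[
    \rF(s_0 \circ p) = \id_{\rF(X \times \bP^1)} .
\]

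The core idea is to realize $\id_{X \times \bP^1}$ and $s_0 \circ p$ as two specializations of a single algebraic ``family'' of morphisms $X \times \bP^1 \to X \times \bP^1$, and then use birational invariance of $\rF$ to identify them. The natural candidate is the rational map
\[
    \phi \col X \times \bP^1 \times \bP^1 \dashrightarrow X \times \bP^1, \qquad (x, t, u) \mapsto (x, tu) ,
\]
which specializes to $\id$ at $u = 1$ and to $s_0 \circ p$ at $u = 0$. The indeterminacy locus consists of the two disjoint, smooth, codimension-$2$ subvarieties $X \times \{(0, \infty)\}$ and $X \times \{(\infty, 0)\}$; blowing them up produces a smooth projective $W$ together with a birational morphism $\mu \col W \to X \times \bP^1 \times \bP^1$ and an extension $\tilde\phi \col W \to X \times \bP^1$. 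Restricting $\mu$ and $\tilde\phi$ over the strict transforms of the divisors $X \times \bP^1 \times \{0\}$ and $X \times \bP^1 \times \{1\}$ then fits everything into a diagram of smooth projective varieties and morphisms in which, after applying $\rF$, every arrow except (potentially) $s_0 \circ p$ and $\id$ is an isomorphism; chasing this diagram produces $\rF(s_0 \circ p) = \id$.

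The main obstacle is the bookkeeping of birational modifications introduced by the blowup. One has to identify precisely the restriction of $\mu$ above $u = 0$ as a composition of blowdowns of exceptional divisors onto $X \times \bP^1$, and verify that the restriction of $\tilde\phi$ there really does recover $s_0 \circ p$ after those blowdowns (while for $u = 1$ the map $\tilde\phi$ restricts simply to the identity, since $u = 1$ avoids the indeterminacy locus). Once these local identifications are in hand, the functoriality of $\rF$ and its invariance under the relevant birational morphisms force the desired equality. I expect this bookkeeping step, rather than any global conceptual issue, to be the delicate part. Alternatively, one can organize the argument through the Kahn--Sujatha category of $R$-equivalence classes of rational maps: exhibit the rational map $\phi$ above as a direct $R$-equivalence between $\id$ and $s_0 \circ p$ in $\kscat_k$, and then use the universal property of $\kscat_k$ with respect to birational invariants to conclude.
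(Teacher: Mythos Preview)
Your plan has a genuine gap at the diagram-chasing step.  After resolving the multiplication map you obtain a smooth projective $W$ with birational $\mu\col W\to X\times\bP^1\times\bP^1$ and a morphism $\tilde\phi\col W\to X\times\bP^1$, together with the two fibre inclusions $\iota_0,\iota_1\col X\times\bP^1\hookrightarrow W$ satisfying $\tilde\phi\circ\iota_1=\id$ and $\tilde\phi\circ\iota_0=s_0\circ p$.  To deduce $\rF(s_0\circ p)=\id$ you must compare $\rF(\iota_0)$ and $\rF(\iota_1)$; but these are inclusions of divisors, not birational morphisms, so birational invariance says nothing about them.  The only obvious way to force $\rF(\iota_0)=\rF(\iota_1)$ is to find a common retraction $r\col W\to X\times\bP^1$ with $\rF(r)$ invertible; the natural candidate $r=\pi_{12}\circ\mu$ (forgetting $u$) factors through the $\bP^1$-bundle projection $\pi_{12}$, and asserting that $\rF(\pi_{12})$ is invertible is precisely the statement you are trying to prove.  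So the argument is circular.  The same circularity afflicts the alternative you sketch via $\kscat_k$: the universal property of $\kscat_k=\SB^{-1}\SmProj_k$ applies to functors inverting \emph{all} of $\SB$, including projective-bundle projections, not to functors that merely invert birational morphisms.  Knowing that $\id$ and $s_0\circ p$ become equal in $\kscat_k$ therefore tells you nothing about a general birational invariant $\rF$ until you already know $\rF$ factors through $\kscat_k$.

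The paper's argument avoids this trap by first using Yoneda to reduce to $\cC=\Set$, then replacing $\rF$ by $\rF(X\times-)$ to reduce to showing that $G(\bP^1)\to G(\Spec k)$ is a bijection for every Set-valued birational invariant $G$.  This last step is a genuine theorem of Colliot--Th\'el\`ene (Appendix~A of \cite{kahn-sujatha}); its proof exploits a specific birational self-map of $\bP^1\times\bP^1$ and the fact that its resolution admits \emph{two different} birational contractions to $\bP^1\times\bP^1$, allowing one to compare maps that your family-over-$\bP^1$ construction cannot reach.  The multiplication map is morally the right object, but the extra input is that the resolved surface carries a second blowdown, and it is the interplay between the two blowdowns --- both inverted by $G$ --- that produces the identity you want.
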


\begin{proof}
    See \cite[1.7.2]{kahn-sujatha}. The idea is to reduce to the case $\cC = \Set$ using Yoneda's lemma. 
    Then, replacing $\rF$ with the birational invariants $\rF(X \times -)$ as $X$ ranges over $\SmProj_k$, it is enough to show that the natural map $\rF(\bP^1) \to \rF(\Spec k)$ is a bijection. 
    The final step is to apply a result of Colliot--Th\'el\`ene, \cite[Appendix A]{kahn-sujatha}.
\end{proof}

Our convention is that a \emph{stable birational invariant} always refers to a covariant functor; the contravariant case is covered by replacing $\cC$ with $\cC^\opp$.

\begin{remark}
	If $\rF$ is a stable birational invariant, then for any dominant morphism $f\col X\ra Y$ such that the associated extension of function fields is purely transcendental, the corresponding morphism $\rF(X)\to \rF(Y)$ is an isomorphism. In particular, for any stably rational variety $X$, applying $\rF$ to $X \to \Spec(k)$ yields an isomorphism $\rF(X) \simeq \rF(\Spec(k))$.
\end{remark}

\subsection{Evaluating on singular varieties}
\label{ssec:eval-on-singular}

Throughout, we fix a stable birational invariant
\[
    \rF:\SmProj_k \to \cC,
\]
where $k$ is an algebraically closed field of characteristic $0$.
We explain the extent to which $\rF$ may be evaluated on singular varieties over $k$; 
eventually, this construction is applied to the strata of a toroidal embedding.

\begin{definition}
\label{def:singstabirinv}
	If $X$ is an integral, proper variety over $k$, then we define
	\[
		\rF(X) := \limit \rF(X')
	\]
    where the inverse limit runs over all resolutions of singularities $X'\to X$. 
    Note that the limit  exists and is cofiltered (any two resolutions may be dominated by a third), and $\rF(X)$ is canonically isomorphic to $\rF(X')$ for any given resolution $X' \to X$. 

    If $f:X \to Y$ is a morphism of integral, proper varieties such that
\begin{equation}
\label{eqn:singcondition}
            f(X) \not\subset Y^\sing,
\end{equation}
    then we may define a morphism $\rF(X) \to \rF(Y)$ as
    \begin{equation}
    \label{eq:define-map-between-singular}
        \rF(f) = \limit\left( \rF(X') \to \rF(Y') \right),
    \end{equation}
    as the limit runs over all commutative diagrams
\begin{equation}
\label{eq:inverse-system-functorial}
    \begin{tikzcd}
        X' \ar[r] \ar[d] & Y' \ar[d] \\
        X \ar[r, "f"] & Y,
    \end{tikzcd}
\end{equation}
where the vertical maps are resolutions of singularities.
\end{definition}

The condition \eqref{eqn:singcondition} guarantees that the morphism $\rF(X') \to \rF(Y')$ does not depend on the diagram in the inverse system \eqref{eq:inverse-system-functorial}.
In other words, \eqref{eq:define-map-between-singular} is a cofiltered limit over a system of isomorphisms. 

\begin{lemma}\label{lem:indofchoice}
    Let $X, Y, Z$ be integral, proper varieties over $k$.
    If $f\col X \ra Y$ and $g\col Y \ra Z$ are two maps satisfying (\ref{eqn:singcondition}) such that $g\circ f$ also satisfies (\ref{eqn:singcondition}) then $\rF(g)\circ \rF(f) = \rF(g\circ f)$.
\end{lemma}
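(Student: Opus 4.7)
The plan is to construct a single commutative diagram of resolutions of $X$, $Y$, $Z$ which simultaneously witnesses all three maps $\rF(f)$, $\rF(g)$, $\rF(g\circ f)$, and then invoke functoriality of $\rF$ on $\SmProj_k$.

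First, I would fix an arbitrary resolution $Z'\to Z$. Because $g$ satisfies \eqref{eqn:singcondition}, the rational map $Y\dra Z'$ induced by inverting the resolution is defined on the dense open subset $g^{-1}(Z\setminus Z^{\sing})$ of $Y$. By Hironaka's resolution of singularities and indeterminacy (applied after an arbitrary initial resolution of $Y$), there is a resolution $Y'\to Y$ through which this rational map extends to a morphism $\tilde g\col Y'\to Z'$. Next, because $f$ satisfies \eqref{eqn:singcondition}, the composite rational map $X\dra Y\dra Y'$ is defined on a dense open subset of $X$, and again by Hironaka there is a resolution $X'\to X$ on which it extends to a morphism $\tilde f\col X'\to Y'$. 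This produces a commutative diagram
\[
\begin{tikzcd}
X' \ar[r, "\tilde f"] \ar[d] & Y' \ar[r, "\tilde g"] \ar[d] & Z' \ar[d] \\
X \ar[r, "f"] & Y \ar[r, "g"] & Z,
\end{tikzcd}
\]
whose vertical maps are resolutions of singularities.

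Now I would read off each of the three maps from this diagram. The left square is an object of the inverse system defining $\rF(f)$, so $\rF(f)$ is identified with $\rF(\tilde f)\col \rF(X')\to \rF(Y')$ under the canonical isomorphisms $\rF(X')\simeq \rF(X)$ and $\rF(Y')\simeq \rF(Y)$; similarly, the right square identifies $\rF(g)$ with $\rF(\tilde g)$. The outer rectangle, whose existence uses the hypothesis that $g\circ f$ satisfies \eqref{eqn:singcondition} (so that the rational map $X\dra Z'$ is defined on a dense open subset), is an object of the inverse system defining $\rF(g\circ f)$, identifying it with $\rF(\tilde g\circ \tilde f)$. Functoriality of $\rF$ on $\SmProj_k$ then gives $\rF(\tilde g\circ \tilde f)=\rF(\tilde g)\circ \rF(\tilde f)$, which is exactly the desired identity $\rF(g\circ f)=\rF(g)\circ \rF(f)$.

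The only substantive input is Hironaka, used twice to resolve indeterminacy; the rest is bookkeeping, using the observation already made after Definition~\ref{def:singstabirinv} that different objects of the inverse system yield the \emph{same} morphism (not merely a compatible family), so that one compatible choice suffices to compute all three maps. The one point requiring a little care is checking that condition \eqref{eqn:singcondition} really is what is needed to extend the rational maps $X\dra Y'$ and $X\dra Z'$ to morphisms on suitable resolutions: the resolution $Y'\to Y$ is an isomorphism over $Y\setminus Y^{\sing}$, so $f(X)\not\subset Y^{\sing}$ is exactly the condition which makes the rational map $X\dra Y'$ generically defined, and analogously for $Z'$.
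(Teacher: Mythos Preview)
Your approach is the same as the paper's: build a single commutative ladder of resolutions over $X\to Y\to Z$ and read off all three maps from it using functoriality on $\SmProj_k$.

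One imprecision worth fixing: your $Y'$ is obtained from an initial resolution of $Y$ by \emph{further} blowups resolving the indeterminacy of $Y\dra Z'$, and those blowups may well have centers in the smooth locus of $Y$. So the claim that $Y'\to Y$ is an isomorphism over $Y\setminus Y^{\sing}$ is false in general; what you actually get is that $Y'\to Y$ is an isomorphism over $Y^{\mathrm{sm}}\cap g^{-1}(Z^{\mathrm{sm}})$. To conclude that $X\dra Y'$ is generically defined you therefore need $f(X)\not\subset Y^{\sing}\cup g^{-1}(Z^{\sing})$, which follows from the hypotheses on $f$ \emph{and} on $g\circ f$ together with irreducibility of $f(X)$. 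With that correction the argument is complete, and indeed this is where the hypothesis on $g\circ f$ is genuinely used in constructing the ladder (not only in interpreting the outer rectangle).
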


\begin{proof}
    By resolving closures of graphs of rational maps, we can find a diagram
    \begin{equation}
    \label{eq:triple-system}
        \begin{tikzcd}
            X' \ar[r, "f'"] \ar[d] & Y' \ar[r, "g'"] \ar[d] & Z' \ar[d] \\
            X \ar[r, "f"] & Y \ar[r, "g"] & Z,
         \end{tikzcd}
    \end{equation}
    where the vertical maps are resolutions. 
    Writing $h = f \circ g$, $h' = f' \circ g'$, we get two diagrams:
    \[
        \begin{tikzcd}
            \rF(X') \ar[r, "\rF(f')"] \ar[d, "\simeq"] & \rF(Y') \ar[r, "\rF(g')"] \ar[d, "\simeq"] & \rF(Z') \ar[d, "\simeq"] \\
            \rF(X) \ar[r, "\rF(f)"] & \rF(Y) \ar[r, "\rF(g)"] & \rF(Z),
        \end{tikzcd} \quad
        \begin{tikzcd}
            \rF(X') \ar[r, "\rF(h')"] \ar[d, "\simeq"] & \rF(Z') \ar[d, "\simeq"] \\
            \rF(X) \ar[r, "\rF(h)"] & \rF(Z).
        \end{tikzcd}
    \]
    We conclude from the fact that $\rF(g') \circ \rF(f') = \rF(h')$.
\end{proof}

\subsection{Evaluating on strata}

We continue with the situation of \S\ref{ssec:eval-on-singular}, so that $\rF$ is a fixed stable birational invariant from $\SmProj_k$ to a category $\cC$.
Our ultimate goal is to apply $\rF$ to the strata of a toroidal embedding over $R = k\llbracket t \rrbracket$ or $k$, and to the inclusions between strata, to construct a chain complex. 
Since $\rF$ is only defined on $k$-schemes, we require that (in the case when $X$ is a toroidal embedding over $R$) all of the strata lie in the special fiber:
\begin{definition}
    A toroidal embedding $(X, D)$ over $R$ or $k$ is \emph{vertical} if $D$ is contained in the special fiber $X_0$.
    We hasten to add that any toroidal embedding $(X, D)$ over $k$ is automatically vertical; the definition is made in this case only for the sake of brevity in what follows.
\end{definition}
Let $(X, D)$ be a vertical toroidal embedding over $R$ or $k$.
In \S\ref{ssec:eval-on-singular}, we have described how to apply $\rF$ to the strata of $(X, D)$.
In order to avoid notational clutter, we often write
\begin{equation*}
    \rF(\sigma) := \rF(D_{\sigma}),
\end{equation*}
for $\sigma \in \rP(X, D)$.

Our next goal is to show that $\rF(-)$ can be applied in a suitable manner to inclusions between strata. 
On singular varieties, $\rF(-)$ is only functorial for composable morphisms $f, g$ such that the images of $f, g,$ and $g \circ f$ meet the nonsingular loci of the targets, cf. \S \ref{lem:indofchoice}.
This property does not generally hold for a composable pair of inclusions between strata of a toroidal embedding. 
Nevertheless, by exploiting properties of toroidal resolutions, we show in Proposition~\ref{prop:existence}  below that there are natural homomorphisms
\[
    \rho_{\sigma, \tau} : \rF(\sigma) \to \rF(\tau),
\]
for every inclusion of strata $D_{\sigma} \subset D_{\tau}$, compatible with composition.
These are eventually used to construct chain complexes associated to toroidal embeddings.

\begin{remark}[Vertical morphisms]
\label{rem:vertical-morphisms}
Let $f\col(Y, E) \to (X, D)$ be a proper morphism between vertical toroidal embeddings over $R$ or $k$, or a morphism in $\TorMod_{\infty}$. 
For $\sigma \in \rP(Y, E)$, the image of the morphism $E_{\sigma} \to D_{f_* \sigma}$ meets the nonsingular locus of $D_{f_* \sigma}$.
Indeed, the singular locus of $D_{f_* \sigma}$ is contained in the union of the proper substrata, while $f(E_{\sigma})$ meets the interior of $D_{f_* \sigma}$ from the definition of $f_* \sigma$.
As a result, the morphism
\[
    \rF(f|_{E_{\sigma}}) : \rF(\sigma) \to \rF(f_* \sigma)
\]
is defined from our discussion in \S\ref{ssec:eval-on-singular}.
\end{remark}

\begin{proposition}
\label{prop:existence}
    Let $(X, D)$ be a vertical toroidal embedding over $R$ or $k$. 
    For each $\sigma_0 \leq \sigma_1$ in $\rP(X, D)$, there is a morphism
    \[
        \rho_{\sigma_0, \sigma_1} : \rF(\sigma_1) \to \rF(\sigma_0),
    \]
    uniquely determined by the following properties:
    \begin{enumerate} 
        \item \label{item:nonsingular-req} If $D_{\sigma_1}$ is not contained in the singular locus of $D_{\sigma_0}$, then $\rho_{\sigma_0, \sigma_1} = \rF(\iota)$, $\iota:D_{\sigma_1} \subset D_{\sigma_0}$.
        \item \label{item:hor-functoriality} Given $\sigma_0 \leq \sigma_1 \leq \sigma_2$, $\rho_{\sigma_0, \sigma_1} \circ \rho_{\sigma_1, \sigma_2} = \rho_{\sigma_0, \sigma_2}$.
        \item \label{item:vert-functoriality} If $f\col(Y, E) \to (X, D)$ is a proper morphism of toroidal embeddings over $k$ (resp., morphism in $\TorMod_{\infty}$), then for any $\tau_0 \leq \tau_1$ in $\rP(Y, E)$, the diagram
        \[
            \begin{tikzcd}[column sep=huge]
                \rF(\tau_1) \ar[d, "\rF(f|_{E_{\tau_1}})"'] \ar[r, "\rho_{\tau_0, \tau_1}"] & \rF(\tau_0) \ar[d, "\rF(f|_{E_{\tau_0}})"] \\
                \rF(f_* \tau_1) \ar[r, "\rho_{f_* \tau_0, f_* \tau_1}"] & \rF(\tau_0)
            \end{tikzcd}
        \]
        commutes. 
    \end{enumerate}
\end{proposition}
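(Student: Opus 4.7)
The plan is to reduce to the snc case via toroidal resolution of singularities (Theorem~\ref{thm:toroidal-resolution-of-singularities}). Fix a toroidal resolution $g \col (\widetilde X, \widetilde D) \to (X, D)$ with $(\widetilde X, \widetilde D)$ snc. For each pair $\sigma_0 \leq \sigma_1$ in $\rP(X, D)$, choose a chain $\sigma_0' \leq \sigma_1'$ in $\rP(\widetilde X, \widetilde D)$ with $g_*(\sigma_i') = \sigma_i$, and set
\begin{equation*}
    \rho_{\sigma_0, \sigma_1} \;=\; \rF(g|_{\widetilde D_{\sigma_0'}}) \circ \rF(\iota') \circ \rF(g|_{\widetilde D_{\sigma_1'}})^{-1},
\end{equation*}
where $\iota' \col \widetilde D_{\sigma_1'} \hookrightarrow \widetilde D_{\sigma_0'}$ is the inclusion of snc strata. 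Here $\rF(\iota')$ is unambiguous because both strata are smooth, and the maps $\rF(g|_{\widetilde D_{\sigma_i'}}) \col \rF(\widetilde D_{\sigma_i'}) \to \rF(\sigma_i)$ are defined by Remark~\ref{rem:vertical-morphisms} and are isomorphisms by Lemma~\ref{lem:fibers-of-toroidal-resolution} together with the remark following Theorem~\ref{thm:colliot-thelene} on invariance under purely transcendental extensions.

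The existence of the chain $\sigma_0' \leq \sigma_1'$ is the main combinatorial input. At the level of cone complexes, the subdivision $\widetilde\Sigma \to \Sigma$ refines $|\sigma_1|$ and its face $|\sigma_0|$. A top-dimensional cone $|\sigma_0'|$ of $\widetilde\Sigma$ contained in $|\sigma_0|$ satisfies $g_*(\sigma_0') = \sigma_0$ by a dimension count on relative interiors. Selecting a top-dimensional cone $|\sigma_1'|$ of $\widetilde\Sigma$ in $|\sigma_1|$ passing through a point in the relative interior of $|\sigma_0'|$, the fan-like property that any two cones of a cone complex meet in a common face forces $|\sigma_0'|$ to be a face of $|\sigma_1'|$, yielding the required $\sigma_0' \leq \sigma_1'$ with $g_*(\sigma_1') = \sigma_1$. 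The same argument extends to chains of any length, which is what is needed for property~(\ref{item:hor-functoriality}).

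Properties~(\ref{item:nonsingular-req})--(\ref{item:vert-functoriality}) are then verified using the snc case. For (\ref{item:nonsingular-req}), if $D_{\sigma_1}$ meets the smooth locus of $D_{\sigma_0}$, the square
\begin{equation*}
    \begin{tikzcd}
        \widetilde D_{\sigma_1'} \ar[r, hook, "\iota'"] \ar[d] & \widetilde D_{\sigma_0'} \ar[d] \\
        D_{\sigma_1} \ar[r, hook, "\iota"] & D_{\sigma_0}
    \end{tikzcd}
\end{equation*}
satisfies the hypothesis of Lemma~\ref{lem:indofchoice}, so applying $\rF$ gives $\rF(\iota) = \rho_{\sigma_0, \sigma_1}$. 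For (\ref{item:hor-functoriality}), take a simultaneous chain $\sigma_0' \leq \sigma_1' \leq \sigma_2'$ lifting $\sigma_0 \leq \sigma_1 \leq \sigma_2$, so that the factors $\rF(g|_{\widetilde D_{\sigma_1'}})^{\pm 1}$ appearing in $\rho_{\sigma_0, \sigma_1} \circ \rho_{\sigma_1, \sigma_2}$ cancel and the composition reduces to the functoriality of $\rF$ on a composable pair of snc inclusions. For (\ref{item:vert-functoriality}), take a common toroidal resolution of $(X, D)$ and $(Y, E)$ lifting $f$, reducing the compatibility square to the snc side. Uniqueness of the family $\{\rho_{\sigma_0, \sigma_1}\}$ is then forced by (\ref{item:nonsingular-req}) and (\ref{item:vert-functoriality}) applied to the resolution $g$, and this also shows that the construction is independent of the resolution and of the chain. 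The main obstacle is the combinatorial claim about simultaneous chains of arbitrary length in a toroidal subdivision; once that is in place, the remaining verifications are diagram chases on the snc side.
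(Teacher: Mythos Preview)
Your definition of $\rho_{\sigma_0,\sigma_1}$ and your treatment of (\ref{item:nonsingular-req}) and (\ref{item:hor-functoriality}) are essentially the paper's: the path-lifting claim you sketch is exactly Lemma~\ref{lem:path-lifting}, and the reduction to snc strata via a toroidal resolution is how the paper proceeds. The real gap is in (\ref{item:vert-functoriality}). Your sentence ``take a common toroidal resolution of $(X,D)$ and $(Y,E)$ lifting $f$'' is not available: $f$ is an \emph{arbitrary} proper morphism of pairs (or a morphism in $\TorMod_\infty$), not a toroidal modification, so there is no reason the rational map $\widetilde Y \dashrightarrow \widetilde X$ between chosen toroidal resolutions can be resolved by a further \emph{toroidal} modification of $\widetilde Y$. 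Toroidal modifications only refine the cone complex; they cannot resolve indeterminacy coming from a map that has nothing to do with the toric charts. Even in cases where some lift $\widetilde f$ exists, you have no control over which stratum of $\widetilde X$ a given stratum of $\widetilde Y$ lands in, so you cannot match it to the specific chain $\sigma_0'\le\sigma_1'$ you used to define $\rho$ on the $X$ side.

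This also exposes a circularity: you deduce independence of $\rho$ from the chosen resolution and chain \emph{from} (\ref{item:vert-functoriality}), but your argument for (\ref{item:vert-functoriality}) tacitly compares $\rho$'s computed via different resolutions or different lifted strata. The paper breaks this loop with Lemma~\ref{lem:rho-does-not-depend}, which proves directly---using connectedness of fibers of a proper birational map over a normal base (Zariski's main theorem)---that the auxiliary map $\varphi_\sigma$ does not depend on the stratum chosen above $\sigma$. With that in hand, the proof of (\ref{item:vert-functoriality}) proceeds by resolving indeterminacy of the rational map $V \dashrightarrow W'$ by an ordinary (non-toroidal) blowup and then running the five-term diagram chase around \eqref{eq:five-pt-diagram}; Lemma~\ref{lem:rho-does-not-depend} is invoked precisely because the image of the resolved map may hit any of the strata in $g^{-1}(B_{\sigma_1})$. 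This argument is the bulk of the proof, and your one-line reduction does not substitute for it.
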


While the morphisms $\rho_{\sigma_0, \sigma_1}$ are defined in an ad hoc fashion in the proof, we briefly indicate a direct definition.
Let us say that a \emph{complete flag} is a sequence 
\[
    \sigma_0 < \cdots < \sigma_n
\]
such that $D_{\sigma_{i + 1}}$ has codimension $1$ in $D_{\sigma_i}$ for each $i$.
Since each $D_{\sigma_i}$ is normal, $D_{\sigma_{i + 1}}$ is not contained in the singular locus of $D_{\sigma_i}$, so there is a morphism
\[
    \rF(D_{\sigma_{i + 1}}) \to \rF(D_{\sigma_i}).
\]
One may check that any given pair of strata $\sigma_0$ and $\sigma_n$ are connected by a complete flag as above.\footnote{For example, using the correspondence between strata and cones in the cone complex $\Sigma(X, D)$, one reduces to the corresponding statement about the faces of strongly convex, rational polyhedral cones.} 
Hence, we obtain a morphism $\rF(\sigma_0) \to \rF(\sigma_n)$ by composition, which must be equal to $\rho_{\sigma_0, \sigma_1}$ by the properties listed in Proposition~\ref{prop:barycentric-subdivision} (and, in particular, does not depend on the choice of complete flag).

\subsection{Proof of Proposition~\ref{prop:existence}}

First, let $(W, B)$ be a toroidal embedding over $k$. 
Fix a stratum $\sigma \in \rP(W, B)$, and a toroidal resolution $g\col(W', B') \to (W, B)$.
For any choice of a stratum $\sigma' \in \rP(W', B')$ such that $g_* \sigma' = \sigma$, we may define a morphism $\varphi_{\sigma}$ by requiring the diagram
\begin{equation}
\label{eq:defining-rho}
    \begin{tikzcd}
        \rF(B'_{\sigma'}) \ar[d, "\simeq"] \ar[r] & \rF(W') \ar[d, "\simeq"] \\
        \rF(B_{\sigma}) \ar[r, dotted, "\varphi_{\sigma}"] & \rF(W) 
    \end{tikzcd}
\end{equation}
to  commute.

\begin{lemma}
\label{lem:rho-does-not-depend}
    Keeping the resolution $g \col (W', B') \to (W, B)$ fixed, the morphism $\varphi_{\sigma}$ defined above does not depend on the choice of $\sigma'$.
\end{lemma}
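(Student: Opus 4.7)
The plan is to reduce the independence claim to two steps: a compatibility statement for comparable choices of $\sigma'$, followed by a combinatorial connectedness argument on the subdivision of the cone $|\sigma|$.

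Let $\alpha_{\sigma'}\colon \rF(B'_{\sigma'}) \to \rF(B_{\sigma})$ and $\iota_{\sigma'}\colon \rF(B'_{\sigma'}) \to \rF(W')$ denote the left vertical and top horizontal arrows in \eqref{eq:defining-rho}, and let $\beta\colon \rF(W') \to \rF(W)$ be the right vertical arrow (which is independent of $\sigma'$), so that the $\sigma'$-dependent morphism is $\varphi_\sigma^{(\sigma')} := \beta \circ \iota_{\sigma'} \circ \alpha_{\sigma'}^{-1}$. Here $\beta$ is an isomorphism by birational invariance, and $\alpha_{\sigma'}$ is an isomorphism by Lemma~\ref{lem:fibers-of-toroidal-resolution}\ref{item:proj-bundle-stratum} combined with the stable birational invariance of $\rF$ (unpacked through Definition~\ref{def:singstabirinv}).

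Step 1 (Comparable choices). Suppose $\sigma'_1 \leq \sigma'_2$ in $\rP(W',B')$ with $g_*\sigma'_i = \sigma$, and let $j\colon B'_{\sigma'_2} \hookrightarrow B'_{\sigma'_1}$ be the induced inclusion. Since $(W',B')$ is an snc pair, every $B'_{\sigma'_i}$ is smooth, so condition~\eqref{eqn:singcondition} holds automatically for $j$ and for the inclusions into $W'$; meanwhile the restrictions $g|_{B'_{\sigma'_i}}\colon B'_{\sigma'_i} \to B_\sigma$ are surjective by hypothesis and hence satisfy \eqref{eqn:singcondition}. Two applications of Lemma~\ref{lem:indofchoice} then yield $\alpha_{\sigma'_1} \circ \rF(j) = \alpha_{\sigma'_2}$ and $\iota_{\sigma'_1} \circ \rF(j) = \iota_{\sigma'_2}$. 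Since both $\alpha_{\sigma'_i}$ are isomorphisms, so is $\rF(j)$, and a short substitution gives $\varphi_\sigma^{(\sigma'_1)} = \varphi_\sigma^{(\sigma'_2)}$.

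Step 2 (Connectedness). Under the correspondence between strata and cones, the strata $\sigma'$ with $g_*\sigma' = \sigma$ correspond to cones of $\Sigma(W',B')$ whose relative interior lies in the interior of $|\sigma|$, and these cones form a subdivision of $|\sigma|$. The subdivision of a single strongly convex cone is a connected polyhedral complex, so any two of its top-dimensional cones are linked by a chain of adjacent top-dimensional cones meeting along codimension-$1$ faces in the interior of $|\sigma|$; each such shared face again lies above $\sigma$. Since every admissible $\sigma'$ is a face of some top-dimensional cone in this subdivision, we obtain a chain in $\rP(W',B')$ connecting $\sigma'_1$ to $\sigma'_2$ whose consecutive members are comparable and all map to $\sigma$. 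Iterating Step~1 along this chain proves the lemma. I expect the main obstacle to be the bookkeeping in Step~1, and particularly the justification that $\alpha_{\sigma'}$ is an isomorphism for $\sigma'$ of arbitrary dimension above $\sigma$, which requires carefully descending to a morphism between resolutions of $B'_{\sigma'}$ and $B_\sigma$ with purely transcendental function field extension; Step~2 is a standard combinatorial fact about polyhedral subdivisions.
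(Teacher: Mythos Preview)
Your proof is correct, and Step~1 is essentially identical to the paper's opening observation. The difference lies entirely in Step~2. The paper reduces the connectedness statement to geometry: it takes the irreducible components $Z_1,\dots,Z_r$ of $g^{-1}(B_\sigma)$, observes that the relevant ones have generic points lying in the fiber $g^{-1}(\eta_\sigma)$, and then invokes Zariski's main theorem (using that $W$ is normal and $g$ is proper birational) to conclude that this fiber is connected, hence any two such components are linked by a zigzag of specializations. You instead argue combinatorially, using that a toroidal modification is by definition a subdivision of cone complexes: the maximal cones in the induced subdivision of $|\sigma|$ are connected through shared codimension-one walls in the interior of $|\sigma|$, and every admissible $\sigma'$ is a face of such a maximal cone. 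Your route is arguably more transparent here, since it stays within the polyhedral language already set up in \S\ref{sec:toroidal-embeddings} and avoids importing an external theorem; the paper's route has the virtue of being insensitive to the precise combinatorial structure of $g$ and would adapt more readily to modifications that are not literally toroidal.

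One small imprecision: the cones with $g_*\sigma' = \sigma$ do not themselves form a subdivision of $|\sigma|$, since they omit the boundary. What you mean is that all cones of $\Sigma'$ contained in $|\sigma|$ form a subdivision of $|\sigma|$, and among these the ones with relative interior in $|\sigma|^\circ$ are exactly those with $g_*\sigma' = \sigma$. This does not affect your argument, since the top-dimensional cones and the interior codimension-one walls you use all lie in the latter class, but it is worth stating precisely. Your flagged concern about $\alpha_{\sigma'}$ being an isomorphism is not an obstacle: it follows by choosing a resolution $\tilde B_\sigma \to B_\sigma$, resolving the rational map $B'_{\sigma'} \dashrightarrow \tilde B_\sigma$, and applying stable birational invariance to the resulting dominant morphism of smooth projective varieties with purely transcendental function field extension.
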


\begin{proof}
    For a stratum $\sigma'$ such that $g_* \sigma' = \sigma$, we write $\varphi(\sigma')$ for the morphism from \eqref{eq:defining-rho}. 
    The first observation is that if $B'_{\sigma'} \subset B'_{\sigma''}$, then $\varphi(\sigma') = \varphi(\sigma'')$.
    Indeed, in the diagram
    \[
        \begin{tikzcd}
            \rF(B'_{\sigma'}) \ar[dr, "\simeq"'] \ar[r] & \rF(B'_{\sigma''}) \ar[r] \ar[d, "\simeq"] & \rF(W') \ar[d, "\simeq"] \\
            & \rF(B_{\sigma}) \ar[r, dotted] & \rF(W) ,
        \end{tikzcd}
    \]
    the left-hand triangle commutes by Lemma~\ref{lem:indofchoice}.
    
    Let $Z_1, \dots, Z_r$ be the irreducible components of $g^{-1}(B_{\sigma})$. Note that each $Z_i$ is a stratum, by Lemma~\ref{lem:fibers-of-toroidal-resolution}.
    Any $\sigma'$ mapping to $\sigma$ is contained in some $Z_i$, and from the previous paragraph, $\varphi(\sigma') = \varphi(Z_i)$.
    Hence, we just need to show that $\varphi(Z_i) = \varphi(Z_j)$ for all $i,j$.

    For brevity, we say that $Z_i$ and $Z_j$ are \emph{adjacent} if the intersection $Z_i \cap Z_j$ (which is a finite disjoint union of strata) contains a connected component, say $Z_{ij}^0$, which dominates $B_{\sigma}$.
    If $Z_i$ and $Z_j$ are adjacent, then $\varphi(Z_{ij}^0) = \varphi(Z_i) = \varphi(Z_j)$. 
    Therefore, we just need to show that any two $Z_i$ and $Z_j$ are related by a sequence of adjacent components. 

    Let $\eta_1, \dots, \eta_r$ be the generic points of $Z_1, \dots, Z_r$, respectively, and let $\eta_\sigma$ be the generic point of $B_{\sigma}$.
    Then $Z_i$ and $Z_j$ are adjacent if and only if $\eta_i$ and $\eta_j$ have a common specialization within the fiber $g^{-1}(\eta_\sigma)$, so we just have to show that $\eta_i$ and $\eta_j$ are related by a zigzag of specializations within $g^{-1}(\eta_\sigma)$, or, equivalently, that $g^{-1}(\eta_\sigma)$ is connected.
    But $W$ is normal and $g$ is a proper birational morphism, so $g^{-1}(\eta_\sigma)$ is connected by Zariski's main theorem.
\end{proof}

We return to the proof of Proposition~\ref{prop:existence}.
    In the case when $D_{\sigma_1} \subset D_{\sigma_0}$ are both nonsingular,
    take Item~\ref{item:nonsingular-req} as the definition of $\rho_{\sigma_0, \sigma_1}$.
    To define $\rho_{\sigma_0, \sigma_1}$ in general, 
    fix strata $\sigma_0 < \sigma_1$ in $\rP(X, D)$.
    For simplicity, we write $W = D_{\sigma_0}$, and let $(W, B) := (D_{\sigma_0}, D|_{D_{\sigma_0}})$ be the toroidal embedding over $k$ obtained by restriction from $(X, D)$.
    Then $D_{\sigma_1} = B_{\sigma_1}$ may be regarded as a stratum in $W$.
    Fix a toroidal resolution $g\col (W', B') \to (W, B)$ be a toroidal resolution, 
    and let $\rho_{\sigma_0, \sigma_1}$ be the morphism $\varphi_{\sigma_1}$ defined above.
    
    We show Item~\ref{item:vert-functoriality}, which implies that $\rho_{\sigma_0, \sigma_1}$ does not depend on the choices made above. 
    Throughout, we write $\tau_0 < \tau_1$ for the chain in $\rP(Y, E)$ as in the statement of Item~\ref{item:vert-functoriality}, and $\sigma_0 = f_* \tau_0$, $\sigma_1 = f_* \tau_1$.
    The morphism $\rho_{\tau_0, \tau_1}$ has been defined by the previous paragraph, i.e., let $(V, C) = (E_{\tau_0}, E|_{E_{\tau_0}})$, and let $(V', C') \to (V, C)$ be the toroidal embedding defining $\rho_{\tau_0, \tau_1}$.
    We write $C_{\tau_1} = E_{\tau_1}$, and let $C'_{\tau_1} \subset V'$ be any stratum which dominates $C_{\tau_1}$. 
    Then we need to show that the diagram
    \[
        \begin{tikzcd}
            \rF(C'_{\tau_1}) \ar[d] \ar[r] & \rF(V') \ar[d] \\
            \rF(B_{\sigma_1}) \ar[r, "\varphi_{\sigma_1}"] & \rF(W)
        \end{tikzcd}
    \]
    commutes.
    By blowing up $V'$ at $C_{\tau_1}'$, we can assume that $C'_{\tau_1}$ is a divisor in $V'$.

    To simplify notation, replace $V'$ with $V$ and refer to the divisor $C'_{\tau_1}$ as $C \subset V$.
    The situation is now that we have a proper morphism $V \to W =  D_{\sigma_0}$ from a smooth variety $V$, sending a smooth divisor $C \subset Y$ to $B_{\sigma_1} = D_{\sigma_1}$.
    The morphisms $V \to W$, $C \to B_{\sigma_1}$ may not be dominant, but their images are not contained in any proper substrata. 
    Our goal is to show that the diagram
    \[
        \begin{tikzcd}[column sep=large]
            \rF(C) \ar[r] \ar[d] & \rF(V) \ar[d] \\
            \rF(B_{\sigma_1}) \ar[r, "\varphi_{\sigma_1}"] & \rF(W)
        \end{tikzcd}
    \]
    commutes, where all arrows are the resulting of applying $\rF$ to a morphism of varieties, except for $\varphi_{\sigma_1}$, which is defined by the toroidal resolution $(W', B')$ from above.

    To prove it, 
    consider the commutative diagram
    \begin{equation}
    \label{eq:five-pt-diagram}
        \begin{tikzcd}
            C' \ar[r] & V' \ar[d, "f'"] \ar[r, "h"] & V \ar[d, "f"] \\
             & W' \ar[r, "g"] & W,
        \end{tikzcd} 
    \end{equation}
    where:
    \begin{itemize}
         \item $h:V' \to V$ is a proper, birational map of smooth varieties resolving the indeterminacy of the rational map $V \dra W'$;
         \item $C'$ is a resolution of the strict transform of $C$ in $Y'$.
     \end{itemize} 
    Since $C'$ eventually maps onto $B_{\sigma_1} \subset W$, the image of $C'$ in $W'$ lies in an irreducible compoment, say $Z'$, of $g^{-1}(B_{\sigma_1})$.
    By Lemma~\ref{lem:fibers-of-toroidal-resolution}, $Z'$ is a stratum of $W'$ which dominates $B_{\sigma_1}$. 
    In particular, $Z'$ is smooth, and by Lemma~\ref{lem:rho-does-not-depend}, the diagram
    \begin{equation}
    \label{eq:zprime-square}
        \begin{tikzcd}
            \rF(Z') \ar[d] \ar[r] & \rF(W') \ar[d] \\
            \rF(B_{\sigma_1}) \ar[r, "\varphi_{\sigma_1}"] & \rF(W)
        \end{tikzcd}
    \end{equation}
    commutes, where the unmarked morphisms come from applying $\rF$ to the obvious maps of varieties.
    We get two diagrams
    \[
        \begin{tikzcd}
            \rF(C') \ar[d]  \ar[dr, phantom, "a"] \ar[r, "\simeq"] & \rF(C) \ar[d] \ar[dr, phantom, "b"] \ar[r] & \rF(V) \ar[d] \\
            \rF(Z') \ar[r, "\simeq"] & \rF(B_{\sigma_1}) \ar[r, "\varphi_{\sigma_1}"'] & \rF(W) ,
         \end{tikzcd} \quad
         \begin{tikzcd}
             \rF(C') \ar[d] \ar[dr, phantom, "c"] \ar[r] & \rF(Y') \ar[d] \ar[dr, phantom, "d"] \ar[r, "\simeq"] & \rF(V) \ar[d] \\
            \rF(Z') \ar[r] & \rF(W') \ar[r, "\simeq"] & \rF(W) .
         \end{tikzcd}
    \]
    Except for $\varphi_{\sigma_1}$, all of the displayed morphisms are the result of applying $\rF$ to a morphism in \eqref{eq:five-pt-diagram}.
    Our goal is to show that Square~$b$ commutes.

    Now, Square~$a$ commutes by Lemma~\ref{lem:indofchoice}, so Square~$b$ commutes if and only if the outer rectangle, which we call Rectangle~$ab$, commutes.
    On the other hand, \eqref{eq:zprime-square} implies that Rectangle $ab$ equals Rectangle $cd$, so it is equivalent to show that Rectangle~$cd$ commutes.
    Then Square~$d$ commutes by Lemma~\ref{lem:indofchoice}, so Rectangle~$cd$ commutes if and only if Square~$c$ commutes.
    Finally, Square~$c$ commutes because it comes from a commutative diagram of smooth varieties.
    We conclude that Square~$b$ commutes; this finishes the proof of Item~\ref{item:vert-functoriality}.

    We show Item~\ref{item:hor-functoriality}.
    Suppose that we have constructed morphisms $\rho_{\sigma, \tau}$ satisfying Item~\ref{item:vert-functoriality}.
    Let $f\col(X', D') \to (X, D)$ be a toroidal resolution, with $\sigma_0 \leq \sigma_1 \leq \sigma_2$ a chain in $\rP(X, D)$. 
    Lemma~\ref{lem:path-lifting} below says that we may find a chain $\tau_0 \leq \tau_1 \leq \tau_2$, so that $D'_{\tau_i}$ maps birationally onto $D_{\sigma_i}$ for each $i$.
    Consider the diagrams
    \[
        \begin{tikzcd}
            \rF(\tau_2) \ar[r, "\rho_{\tau_1, \tau_2}"] \ar[d, "\simeq"] & \rF(\tau_1) \ar[r, "\rho_{\tau_0, \tau_1}"] \ar[d, "\simeq"] & \rF(\tau_0) \ar[d, "\simeq"] \\
            \rF(\sigma_2) \ar[r, "\rho_{\sigma_1, \sigma_2}"] & \rF(\sigma_1) \ar[r, "\rho_{\sigma_0, \sigma_1}"] & \rF(\sigma_0),
        \end{tikzcd} \quad
        \begin{tikzcd}
            \rF(\tau_2) \ar[d, "\simeq"] \ar[r, "\rho_{\tau_0, \tau_2}"] & \rF(\tau_0) \ar[d, "\simeq"] \\
            \rF(\sigma_2) \ar[r, "\rho_{\sigma_0, \sigma_2}"] & \rF(\sigma_0)
        \end{tikzcd}
    \]
    Both diagrams are commutative by Item~\ref{item:vert-functoriality}.
    Moreover, since the strata of $(X', D')$ are smooth, $\rho_{\tau_1, \tau_2} \circ \rho_{\tau_0, \tau_1} = \rho_{\tau_0, \tau_2}$.
    These two facts imply that $\rho_{\sigma_1, \sigma_2} \circ \rho_{\sigma_0, \sigma_1} = \rho_{\sigma_0, \sigma_2}$.
    A similar argument shows Item~\ref{item:nonsingular-req} in the case when $D_{\sigma}$ is not contained in the singular locus of $D_{\tau}$. \qed

\begin{lemma}[Path lifting]
\label{lem:path-lifting}
    Let $g\col (Y, E) \to (X, D)$ be a toroidal modification of a toroidal embedding over $R$ or $k$.
    \label{item:lift-one-path} If $\bsigma = (\sigma_0 < \cdots < \sigma_\ell)$
    is an $\ell$-chain in $\rP(X, D)$, then there exists an $\ell$-chain
    \[
        \btau = (\tau_0 < \cdots < \tau_\ell)
    \]
    in $\rP(Y, E)$ such that for all $i$, $E_{\tau_i}$ maps birationally to $D_{\sigma_i}$ via $g$.
\end{lemma}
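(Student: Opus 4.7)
The plan is to translate the statement into a question about the associated cone complexes and then proceed by induction on $\ell$. Under the bijection between strata and cones, a chain $\sigma_0 < \cdots < \sigma_\ell$ in $\rP(X,D)$ corresponds to a strictly ascending chain of face inclusions $\sigma_0 \subsetneq \cdots \subsetneq \sigma_\ell$ in the cone complex $\Sigma = \Sigma(X,D)$, where $\dim \sigma_i = \codim D_{\sigma_i}$. The toroidal modification $g$ corresponds to a subdivision $\Sigma' \to \Sigma$. By Lemma~\ref{lem:fibers-of-toroidal-resolution}(\ref{item:proj-bundle-stratum}), the map $E_\tau \to D_{g_*\tau}$ is birationally a projective bundle of rank $\dim g_*\tau - \dim \tau$, so it is birational precisely when $\dim \tau = \dim g_*\tau$. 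It therefore suffices to exhibit a chain of cones $\tau_0 \subsetneq \cdots \subsetneq \tau_\ell$ in $\Sigma'$ with each $\tau_i$ contained in $\sigma_i$ (that is, with the relative interior of $\tau_i$ inside the relative interior of $\sigma_i$) and with $\dim \tau_i = \dim \sigma_i$.

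I would build this chain by induction on $i$. For the base case $i=0$, the restriction of $\Sigma'$ to $\sigma_0$ is a subdivision of $\sigma_0$, whose top-dimensional cones cover $\sigma_0$; pick any such cone as $\tau_0$. For the inductive step, suppose $\tau_{i-1} \subset \sigma_{i-1}$ has been chosen with $\dim \tau_{i-1} = \dim \sigma_{i-1}$. Choose a point $p$ in the relative interior of $\tau_{i-1}$ and a vector $v$ pointing from $p$ into the relative interior of $\sigma_i$ (such $v$ exists because $\sigma_{i-1}$ is a proper face of $\sigma_i$). For sufficiently small $t>0$, the point $p+tv$ lies in the interior of $\sigma_i$ and hence in the relative interior of a unique cone $\tau_i$ of the subdivision $\Sigma'|_{\sigma_i}$. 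Since $p \in \overline{\tau_i}$ lies in the relative interior of $\tau_{i-1}$, the intersection $\tau_{i-1} \cap \tau_i$ contains $\tau_{i-1}$; because $\tau_{i-1}$ and $\tau_i$ are cones of a common subdivision, their intersection is a face of each, forcing $\tau_{i-1}$ to be a face of $\tau_i$. A genericity argument on $v$ ensures $\dim \tau_i = \dim \sigma_i$: the union of lower-dimensional cones of $\Sigma'|_{\sigma_i}$ has empty interior in $\sigma_i$, and the condition that $p+tv$ stay in a fixed lower-dimensional cone for small $t$ cuts out a proper linear subspace of directions, so generic $v$ enters the interior of a top-dimensional cone.

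I expect the main obstacle to be the simultaneous verification that $\tau_{i-1}$ is a face of $\tau_i$ \emph{and} that $\dim \tau_i = \dim \sigma_i$. The face relation is a consequence of the axiom that two cones of a subdivision meet in a common face, applied after observing that the closures of $\tau_{i-1}$ and $\tau_i$ share a point in the relative interior of $\tau_{i-1}$; the dimension requirement is a genericity statement on the direction $v$ used to leave $\sigma_{i-1}$. The argument is uniform in whether we work over $R$ or over $k$, since it depends only on the combinatorics of the cone complex $\Sigma$ and the subdivision $\Sigma'$.
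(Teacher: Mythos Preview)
Your proof is correct and follows essentially the same approach as the paper: translate to cone complexes and build the lifted chain by an inductive genericity argument, using the fan axiom that cones meet along common faces to force $\tau_{i-1} \subset \tau_i$. The paper organizes the induction slightly differently—first choosing generic points $p_i \in |\sigma_i|$ avoiding the images of lower-dimensional cones of $\Sigma'$, then reading off the $\tau_i$ by descending induction—but your forward ray-shooting construction achieves the same end.
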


\begin{proof}
We write $\varphi:\Sigma' \to \Sigma$ for the corresponding map of cone complexes, 
and $|\sigma|^\circ$ for the interior of a cone.
For each $i$, let $W_i \subset |\sigma_i|$ be the union of the images of lower-dimensional cones from $\Sigma'$.
Let $U_i = |\sigma_i| \setminus W_i$ be the complement. 
By definition, if $p \in U_i$ is in the image of $|\tau| \subset |\Sigma'|$, then $|\tau|$ has a face $|\tau'|$ mapping to $|\sigma_i|$ with $\dim |\tau_i| = \dim |\sigma_i|$.

We define $p_0 \in U_0, \dots, p_\ell \in U_\ell$ inductively.
First, choose any $p_0 \in U_0$.
Given $p_{i - 1}$, we construct $p_{i}$ as follows: Regard $p_{i - 1}$ as an element of $|\sigma_i|$ via the inclusion $|\sigma_{i - 1}| \subset |\sigma_i|$.
Since $U_{i}$ is dense in $|\sigma_i|$ and the set of walls $W_i$ is finite, we may choose an element $p_i$ of $U_i$ which is not separated from $p_{i - 1}$ by any wall, i.e., $p_{i - 1}$ lies in the closure of the connected component of $U_i$ containing $p_i$.
If $|\tau| \subset |\Sigma'|$ contains $p_{i}$, then it contains the connected component of $U_i$ containing $p_i$, as well as its closure.
The upshot is that if any $|\tau| \subset |\Sigma'|$ contains $p_i$, then it also contains $p_{i - 1}$.

In the end, we obtain a point $p_\ell$ in $U_\ell$. 
From the definition of a subdivision, there exists $|\tau_\ell|$ of the same dimension as $|\sigma_\ell|$ such that $|\tau_\ell|^\circ$ contains $p_\ell$. By the previous paragraph, $|\tau_\ell|$ contains $p_{\ell - 1} \in U_{\ell - 1}$.
As we explained in the first paragraph, $|\tau_\ell|$ has a face $|\tau_{\ell - 1}|$ of the same dimension as $|\sigma_{\ell - 1}|$ containing $p_{\ell - 1}$. 
Then $|\tau_{\ell - 1}|$ contains $p_{i - 2}$, and we proceed by induction.
In the end, we get a sequence of cones $|\tau_\ell| \supset \cdots \supset |\tau_0|$, such that $\dim |\tau_i| = \dim |\sigma_i|$ and $|\tau_i|$ maps to the interior of $|\sigma_i|$.
From the correspondence between cones and strata, the corresponding strata $\tau_0 < \cdots < \tau_\ell$ satisfy the conclusion of the lemma.
\end{proof}

\section{Chain complexes from birational invariants}
\label{sec:chain-complexes}

\subsection{The subdivision complex}

Throughout, we fix $(X, D)$, a vertical toroidal embedding over $R = k\llbracket t \rrbracket$ or $k$. 
We also fix a stable birational invariant
\[
    \rF : \SmProj_k \to \cA
\]
valued in an \emph{additive} category $\cA$.

Write $\rP = \rP(X, D)$.
We write $\Sd_n(\rP)$ and $\Sd_n^+(\rP)$ for the following sets of chains in the poset $\rP$:
\begin{align*}
    \Sd_n(\rP) &= \{(\sigma_0 < \cdots < \sigma_n) \mid \sigma_i \in \rP\} , \\
    \Sd_n^+(\rP) &= \{(\sigma_0 \leq \cdots \leq \sigma_n) \mid \sigma_i \in \rP\}.
\end{align*} 
The elements of $\Sd_n(\rP)$ are called \emph{nondegenerate $n$-chains}. 
In other words, an $n$-chain is \emph{degenerate} if it contains a repetition.

For $0 \leq i,j \leq n$, there are \emph{face maps} and \emph{degeneracy maps}
\begin{align*}
     d_j: & \Sd^+_{n}(\rP) \to \Sd^+_{n - 1}(\rP), \\
     & (\sigma_0 \leq \cdots \leq \sigma_n) \mapsto (\sigma_0 \leq \cdots \leq \widehat{\sigma}_j \leq \cdots \leq \sigma_n), \\
     s_i: & \Sd_{n}^+(\rP) \to \Sd^+_{n + 1}(\rP), \\
     & (\sigma_0 \leq \cdots \leq \sigma_n) \mapsto (\sigma_0 \leq \cdots \leq \sigma_i \leq \sigma_i \leq \cdots \leq \sigma_n).
\end{align*}
The face maps preserve nondegeneracy, while the degeneracy maps do not.

\begin{remark}
    The face and degeneracy maps make $\Sd^+(\rP)$ into a simplicial set; although we shall not directly use the language of simplicial sets, most of the constructions presented in this section are special cases of standard manipulations involving simplicial sets and simplicial objects in additive categories.
\end{remark}

Given an element $\bsigma = (\sigma_0 \leq \sigma_1 \leq \cdots \leq \sigma_n)$ of $\Sd^+_n(\rP)$, we set
\[
    \rF(\bsigma) := \rF(\sigma_n).
\]
We similarly obtain face and degeneracy maps after applying $\rF$:
\begin{align*}
    d_j: & \rF(\bsigma) \to \rF(d_j \bsigma), \quad d_j = \begin{cases}
        \id : \rF(\sigma_n) \to \rF(\sigma_n) & j < n \\
        \rho_{\sigma_{n - 1}, \sigma_n} : \rF(\sigma_n) \to \rF(\sigma_{n - 1}), & j = n.
    \end{cases}\\
     s_i: & \rF(\bsigma) \to \rF(s_i \bsigma), \quad s_i = \id : \rF(\sigma_n) \to \rF(\sigma_n).
\end{align*}
Here, the morphism $\rho_{\sigma_{n - 1}, \sigma_n}$ was constructed in Proposition~\ref{prop:existence}.

\begin{definition}
\label{def:subdivision-complex}
    With the notation from above, we define the \emph{extended subdivision complex} $\Sd^+(X, D; \rF)$ to be the chain complex
    \[
        \begin{tikzcd}
            \cdots \ar[r] & \displaystyle{\bigoplus_{\bsigma \in \Sd^+_2(\rP)} \rF(\bsigma) } \ar[r] & \displaystyle{\bigoplus_{\bsigma \in \Sd^+_1(\rP)} \rF(\bsigma) } \ar[r] & \displaystyle{\bigoplus_{\bsigma \in \Sd^+_0(\rP)} \rF(\bsigma) } \ar[r] & 0 ,
        \end{tikzcd}
    \]
    where the differential is given on each summand $\rF(\bsigma)$ by 
    \[
        d = (-1)^j d_j, \quad d_j: \rF(\sigma) \to \rF(d_j \sigma).
    \]
    To check that $\Sd^+(X, D; \rF)$ is indeed a chain complex, i.e., that $d^2 = 0$, one argues that the terms of $d^2$ cancel in pairs.
    For this, the only nontrivial point is to show that 
    \begin{equation*}
        \rho_{\sigma_{n - 2}, \sigma_{n - 1}} \circ \rho_{\sigma_{n - 1}, \sigma_n} = \rho_{\sigma_{n - 2}, \sigma_{n}},
    \end{equation*}
    which follows from Proposition~\ref{prop:existence}.

    Similarly, we define the \emph{subdivision complex} $\Sd(X, D; \rF)$, which is the subcomplex of $\Sd^+(X, D; \rF)$ given in degree $n$ by 
    \[
        \Sd_n(X, D; \rF) = \bigoplus_{\bsigma \in \Sd_n(\rP)} \rF(\bsigma).
    \]
    While the extended subdivision complex $\Sd^+(X, D; \rF)$ is unbounded, the subdivision complex $\Sd(X, D; \rF)$ is bounded. 
\end{definition}

 Our next goal is to show that the two complexes are homotopic; to that end, we consider the degenerate chains. 
    For $n \geq 0$, define
    \[
        \Sd^{deg}_n(X, D; \rF) = \bigoplus_{\bsigma} \rF(\bsigma),
    \]
    as $\bsigma$ runs over the degenerate chains in $\Sd_n^+(\rP)$, i.e., the chains with repetitions.
    The following lemma shows that the objects $\Sd^{deg}_n(X, D; \rF)$ form a subcomplex.

\begin{lemma}
Let $(X, D)$ be a toroidal embedding over $R$ or $k$.
\label{lem:inclusion-of-nondegenerate}
    \begin{enumerate} 
        \item The objects $\Sd^{deg}_n(X, D; \rF) \subset \Sd_n^+(X, D; \rF)$, $n \geq 0$ form a subcomplex of $\Sd^+(X, D; \rF)$.
        \item There is a splitting $\Sd^+(X, D; \rF) = \Sd(X, D; \rF) \oplus \Sd^{deg}(X, D; \rF)$, 
        \item The inclusion $\Sd(X, D; \rF) \to \Sd^+(X, D; \rF)$ is a homotopy equivalence, with homotopy inverse given by the projection away from $\Sd^{deg}(X, D; \rF)$.
    \end{enumerate}
\end{lemma}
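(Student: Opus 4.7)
The plan is to tackle the three items in order; the first two are quick, while the third contains the real content.

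For item (1), the subtlety is that a face map $d_j$ applied to a degenerate chain can yield a nondegenerate chain, so subcomplex-hood is not automatic. I would check this cannot destabilize the degenerate subcomplex by showing the nondegenerate contributions cancel. Let $\bsigma = (\sigma_0 \leq \cdots \leq \sigma_n)$ be degenerate. If $\bsigma$ has two distinct adjacent repeated pairs, or a run of at least three equal consecutive entries, then every $d_j \bsigma$ is still degenerate and nothing need be checked. Otherwise $\bsigma$ has a unique pair $\sigma_i = \sigma_{i+1}$; then only $d_i \bsigma$ and $d_{i+1} \bsigma$ are nondegenerate, and they coincide as chains. On the level of $\rF$, both face maps are the identity when $i+1 < n$, while when $i+1=n$ we use $\sigma_{n-1}=\sigma_n$ to conclude $d_n^* = \rho_{\sigma_n,\sigma_n} = \mathrm{id}$ via Item~(1) of Proposition~\ref{prop:existence}. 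The signs $(-1)^i$ and $(-1)^{i+1}$ are opposite, so the contributions cancel.

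Item (2) is then immediate: $\Sd(X,D;\rF)$ is tautologically a subcomplex since face maps preserve strict chains, and the set-level decomposition $\Sd^+_n(\rP) = \Sd_n(\rP) \sqcup \Sd^{deg}_n(\rP)$ combines with (1) to give a direct sum decomposition as chain complexes. For item (3), the splitting in (2) implies the homotopy equivalence claim is equivalent to showing $\Sd^{deg}(X,D;\rF)$ is chain contractible, since then $(\mathrm{incl})\circ(\mathrm{proj})$ differs from the identity exactly by the projection onto a contractible subcomplex.

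My approach to the contractibility is to recognize $\Sd^+(X, D; \rF)$ as the Moore complex of a simplicial object $Y_\bullet$ in $\cA$ with $Y_n = \bigoplus_{\bsigma \in \Sd^+_n(\rP)} \rF(\bsigma)$; the simplicial identities at the level of $\rF$ reduce to the composition law $\rho_{\sigma_0,\sigma_1} \circ \rho_{\sigma_1,\sigma_2} = \rho_{\sigma_0,\sigma_2}$ supplied by Item~(2) of Proposition~\ref{prop:existence}, together with the fact that all degeneracies in $Y_\bullet$ act by identity morphisms. Under this identification, $\Sd(X, D; \rF)$ is the normalized Moore complex, and contractibility of the degenerate subcomplex is the classical Eilenberg--Zilber/Dold--Kan lemma for simplicial objects in additive categories (due to Dold--Puppe). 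The main obstacle is making this last step watertight without an abelian hypothesis on $\cA$; if the citation route is unavailable, I would instead give an explicit contracting homotopy via the standard filtration of $\Sd^{deg}_n$ by last-used degeneracy, inducting across the filtration and exploiting that the simplicial identities $d_j s_n = s_{n-1} d_j$ for $j < n$ and $d_n s_n = d_{n+1} s_n = \mathrm{id}$ hold on the nose in $\cA$.
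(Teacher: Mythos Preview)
Your treatment of items~(1) and~(2) is correct and essentially identical to the paper's: the paper also argues that for a degenerate $\bsigma$ the nondegenerate faces $d_j\bsigma$ come in sign-cancelling pairs (from $\sigma_{j-1}=\sigma_j$ versus $\sigma_j=\sigma_{j+1}$), and deduces the splitting from that. Your extra care in checking the $j=n$ case via $\rho_{\sigma_n,\sigma_n}=\id$ is a detail the paper leaves implicit.

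For item~(3) the routes differ. The paper asserts in one line that $h=\sum_i(-1)^i s_i$ furnishes a homotopy between $\iota\circ\pi$ and the identity. You instead recognize $\Sd^+(X,D;\rF)$ as the unnormalized Moore complex of a simplicial object in $\cA$ and reduce to contractibility of the degenerate subcomplex, either by citing Dold--Puppe or by the standard filtration by last-used degeneracy. Your approach is the robust one: the filtration argument uses only the simplicial identities and goes through in any additive category, addressing exactly the concern you raise. By contrast, the paper's formula does not appear to work as written. Already for $\rP=\{a<b\}$ and the degenerate $1$-chain $\bsigma=(a\le a)$ one has $s_0\bsigma=s_1\bsigma=(a\le a\le a)$, so $h_1|_{\rF(\bsigma)}=0$; similarly $d_0\bsigma=d_1\bsigma=(a)$ with both face maps equal to $\id$, so $d|_{\rF(\bsigma)}=0$; hence $(dh+hd)|_{\rF(\bsigma)}=0$ whereas $(\id-\iota\pi)|_{\rF(\bsigma)}=\id$. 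So the paper's line should be read as a pointer to the degeneracy maps rather than a finished argument, and your plan fills the gap.
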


\begin{proof}
    For simplicity, we write $\Sd^{deg} = \Sd^{deg}(X, D; \rF)$, etc. 
    We check that the differential $d$ carries $\Sd^{deg}_n$ to $\Sd^{deg}_{n - 1}$. 
    If $\bsigma = (\sigma_0 \leq \cdots )$ is degenerate, then $d_j \bsigma$ is degenerate unless $\sigma_j$ appears exactly twice: Either $\sigma_{j - 1} = \sigma_j$ or $\sigma_j = \sigma_{j + 1}$.
    Hence, the nondegenerate terms of $d = (-1)^j d_j$ cancel in pairs.
    It follows that $\Sd^+(X, D; \rF)$ admits the desired splitting.

    For the second claim, the only nontrivial point is to give a homotopy between $\iota \circ \pi$ and the identity, where $\pi$ and $\iota$ are the projection and inclusion of $\Sd$.
    The homotopy is given by the sum $\sum (-1)^i s_i$, where $s_i$ are the degeneracy maps defined above.
\end{proof}

\begin{definition}[Morphisms between subdivision complexes]
\label{def:morphism-between-subdivision}
    Let $f\col (Y, E) \to (X, D)$ be a proper morphism of vertical toroidal embeddings over $R$ or $k$, or a morphism in $\TorMod_{\infty}$.
    We write $\rQ = \rP(Y, E)$, $\rP = \rP(X, D)$.
    Then is a morphism of complexes
    \[
        \Sd^+(f_*) : \Sd^+(Y, E; \rF) \to \Sd^+(X, D; \rF),
    \]
    given as follows: 
    For $\bsigma = (\sigma_0 \leq \cdots \leq \sigma_n) \in \Sd^+_n(\rQ)$, we set 
    \[
        f_* \bsigma = (f_* \sigma_0 \leq \cdots \leq f_* \sigma_n) \in \Sd_n^+(\rP).
    \]
    Then the desired morphism $\Sd^+(f_*)$ is given on the summand $\rF(\bsigma)$ by the map
    \[
        \rF(\bsigma) = \rF(\sigma_n) \to \rF(f_* \sigma_n) = \rF(f_* \bsigma),
    \]
    where the middle morphism is the vertical morphism from Remark~\ref{rem:vertical-morphisms}

    Similarly, we define
    \[
        \Sd(f_*) : \Sd(Y, E; \rF) \to \Sd(X, D; \rF),
    \]
    as $\pi \circ \Sd^+(f_*) \circ \iota$, where $\iota \col \Sd \to \Sd^+$ is the inclusion and $\pi \col \Sd^+ \to \Sd$ is the projection.
\end{definition}

\begin{lemma}
\label{lem:functoriality-of-sd}
    Let $\cC$ be one of the following categories:
    \begin{enumerate} 
        \item Toroidal embeddings over $k$, with proper morphisms of pairs.
        \item Vertical toroidal embeddings over $R$, with proper morphisms of pairs.
        \item The category of proper toroidal models, $\TorMod_{\infty}$.
    \end{enumerate}
    Then there is functor $\cC \to \Ch_{\geq 0} \cA$, given by $(X, D) \mapsto \Sd^+(X, D; \rF)$, $f \mapsto \Sd^+(f_*)$.
    Similarly, there is a functor given by sending $(X, D)$ to $\Sd(X, D; \rF)$, and $f \mapsto \Sd(f_*)$.
\end{lemma}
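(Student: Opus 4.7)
The plan is to verify functoriality of $\Sd^+$ first and then deduce it for $\Sd$ using the direct sum decomposition from Lemma~\ref{lem:inclusion-of-nondegenerate}. Throughout, the three cases listed can be handled uniformly, since the only facts we need are (i) that $f \mapsto f_*$ is functorial on strata (Lemma~\ref{lemma:functoriality_of_strata}), (ii) that the vertical morphism $\rF(\bsigma) \to \rF(f_* \bsigma)$ is well-defined (Remark~\ref{rem:vertical-morphisms}), and (iii) that these morphisms compose correctly (Lemma~\ref{lem:indofchoice}) and commute with the $\rho$'s (Proposition~\ref{prop:existence}(\ref{item:vert-functoriality})).

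First, I would check that $\Sd^+(f_*)$ is a chain map. The face map $d_j$ on the summand $\rF(\bsigma)$ with $\bsigma = (\sigma_0 \le \cdots \le \sigma_n)$ is the identity for $j < n$ and is $\rho_{\sigma_{n-1}, \sigma_n}$ for $j = n$. For $j < n$ the commutativity with $\Sd^+(f_*)$ is trivial, since both horizontal arrows in the relevant square are identities on $\rF(\sigma_n)$ and $\rF(f_*\sigma_n)$. For $j = n$, the commuting square is exactly the one supplied by Proposition~\ref{prop:existence}(\ref{item:vert-functoriality}), applied to the adjacent pair $\sigma_{n-1} \le \sigma_n$. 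Summing with signs, $\Sd^+(f_*)$ commutes with the differential.

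Next, I would verify multiplicativity under composition. Given composable morphisms $f \col (Y,E) \to (X,D)$ and $g \col (X,D) \to (W,B)$, Lemma~\ref{lemma:functoriality_of_strata} yields $(g \circ f)_* \bsigma = g_*(f_* \bsigma)$ for every chain $\bsigma$, so the simplicial part matches. On each summand $\rF(\bsigma) = \rF(\sigma_n)$, the map induced by $g\circ f$ is $\rF((g\circ f)|_{E_{\sigma_n}})$, while the composition $\Sd^+(g_*)\circ \Sd^+(f_*)$ is $\rF(g|_{D_{f_*\sigma_n}}) \circ \rF(f|_{E_{\sigma_n}})$. Since both $f|_{E_{\sigma_n}}$ and $g|_{D_{f_*\sigma_n}}$ satisfy the condition \eqref{eqn:singcondition} (their images meet the nonsingular locus of each target stratum by Remark~\ref{rem:vertical-morphisms}), and the same is true of their composite, Lemma~\ref{lem:indofchoice} identifies the two. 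Identity morphisms clearly go to identities, so $\Sd^+$ is a functor.

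Finally, to descend to $\Sd$, I would observe that $\Sd^+(f_*)$ preserves the subcomplex $\Sd^{deg}$: if $\bsigma$ contains a repetition $\sigma_i = \sigma_{i+1}$, then $f_*\bsigma$ contains the repetition $f_*\sigma_i = f_*\sigma_{i+1}$. Hence, under the splitting $\Sd^+ = \Sd \oplus \Sd^{deg}$ of Lemma~\ref{lem:inclusion-of-nondegenerate}, $\Sd^+(f_*)$ is lower-triangular, and its diagonal block on $\Sd$ is precisely $\Sd(f_*) = \pi \circ \Sd^+(f_*) \circ \iota$. Composition of lower-triangular morphisms multiplies the diagonal blocks, which gives $\Sd((g \circ f)_*) = \Sd(g_*) \circ \Sd(f_*)$ from the functoriality of $\Sd^+$. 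The main (minor) subtlety is this last point about the splitting not being preserved on the first summand; everything else is a straightforward bookkeeping consequence of the results already established.
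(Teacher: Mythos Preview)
Your proposal is correct and follows essentially the same approach as the paper, which simply writes ``These all boil down to Proposition~\ref{prop:existence}.'' Your argument is a careful unpacking of that one sentence: the chain-map check is precisely Proposition~\ref{prop:existence}(\ref{item:vert-functoriality}), compatibility with composition uses Lemma~\ref{lemma:functoriality_of_strata} and Lemma~\ref{lem:indofchoice} as you say, and the lower-triangular observation correctly handles the passage from $\Sd^+$ to $\Sd$.
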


\begin{proof}
    These all boil down to Proposition~\ref{prop:existence}.
\end{proof}

\subsection{Orderings and the \v{C}ech complex}

For a toroidal embedding $(X, D)$ over $R$ or $k$, we write $\rP = \rP(X, D)$, and $\rP_n \subset \rP$ for the set of strata of codimension $n + 1$ in $X$. 
For example, $\rP_0$ is the set of irreducible components of $D$.
For a stratum $\sigma$, we write $\vertex(\sigma) \subset \rP_0$ for the subset of divisors containing $D_{\sigma}$.

\begin{definition}
\label{def:ordering-on-simplicial-toroidal-embedding}
    Let $(X, D)$ be a simplicial toroidal embedding over $R$ or $k$.
    An \emph{ordering} on $(X, D)$ is a partial ordering on $\rP_0$ which restricts to a linear ordering on $\vertex(\sigma)$ for all $\sigma \in \rP$.
    For an ordered simplicial toroidal embedding, 
    we usually write elements of $\rP_0$ using the symbols $i, j, k$, etc., and the partial ordering as $i \preceq j$, etc.
\end{definition}

Any choice of linear ordering $\preceq$ on $\rP_0$ gives an ordering on $(X, D)$; we refer to such an $(X, D)$ as a \emph{linearly ordered} toroidal embedding.
The reason for considering more general (nonlinear) orderings is specifically to cover the example of a barycentric subdivision $(X', D') \to (X, D)$ of an arbitrary (not necessarily simplicial) toroidal model $(X, D)$. Indeed, $(X', D')$ has a natural ordering which is not globally linear, as is explained below in \S \ref{ssec:barycentric-sub}.

\begin{situation}
\label{sit:ordered-simplicial}
    Throughout, we let $(X, D)$ be an ordered, simplicial, vertical toroidal embedding over $R$ or $k$.
    As above, we write $\rP = \rP(X, D)$, and denote by $\rP_n$ the set of strata of codimension $n$.
\end{situation}

\begin{definition}
\label{def:cech-cpx}
    In Situation~\ref{sit:ordered-simplicial}, the \emph{\v{C}ech complex} $\cech(X, D; \rF)$ is the bounded chain complex
    \[
        \begin{tikzcd}
            0 \ar[r] & \cdots \ar[r] & \displaystyle{\bigoplus_{i \prec j \prec k} \rF(D_{ijk})} \ar[r] & \displaystyle{\bigoplus_{i \prec j} \rF(D_{ij}) } \ar[r] & \displaystyle{\bigoplus_{i} \rF(D_i) } \ar[r] & 0,
        \end{tikzcd}
    \]
    where, by definition, $\rF(D_{I}) = \bigoplus \rF(D_{\sigma})$ as $D_{\sigma}$ runs over the connected components of the intersection $D_{I} = \bigcap_{i \in I} D_i$, and the direct sums run over the linearly ordered subsets $I \subset \rP_0$.
    The differential is the usual differential in \v{C}ech theory; just as with the subdivision complex, the fact that $d^2 = 0$ follows from Proposition~\ref{prop:existence}.
   
    The \emph{extended \v{C}ech complex} $\cech^+(X, D, \rF)$ is the unbounded chain complex given by 
    \[
        \begin{tikzcd}
             \cdots \ar[r] & \displaystyle{\bigoplus_{i \preceq j \preceq k} \rF(D_{ijk})} \ar[r] & \displaystyle{\bigoplus_{i \preceq j} \rF(D_{ij}) } \ar[r] & \displaystyle{\bigoplus_{i} \rF(D_i) } \ar[r] & 0,
        \end{tikzcd}
    \]
    where the direct sums run over the linearly ordered subsets of $\rP_0$ with repetitions. 
\end{definition}

We frequently need to expand the summands $\rF(D_I)$ above into the direct sums $\bigoplus \rF(D_\sigma)$; 
to do so, we introduce a more precise notation, which keeps track of both the linearly ordered set $I$ and the connected component $D_\sigma$.

\begin{notation}[Coordinates]
\label{notation:coordinates}
    In Situation~\ref{sit:ordered-simplicial}, for any stratum of codimension $n$, $\vertex(\sigma)$ may be identified with a linearly ordered subset, say $I = \{p_0 \prec \dots \prec p_n\}$, of $\rP_0$.
    Moreover, any $\tau \leq \sigma$ is uniquely determined by a subset $I' = \{p_{\ell_1}, \dots, p_{\ell_k}\}$ of $\{p_0, \dots, p_n\}$; 
    indeed, $D_\tau$ is the unique connected component of $D_{I'}$ containing $D_{\sigma}$.
    We express the situation notationally by writing
    \[
        \tau = [p_{\ell_0}, \dots, p_{\ell_k}]_{\sigma}.
    \]
    In this notation, the differential $d$ of the \v{C}ech complex is given by 
    \[
        d = (-1)^j d_j : \rF(\sigma) \to \rF(d_j \sigma),
    \]
    where, if $\vertex(\sigma) = \{p_0 \prec \dots \prec p_n\} \subset \rP_0$, then 
    \[
        d_j \sigma = [p_0, \dots, \widehat{p}_j, \dots, p_n]_{\sigma}.
    \]
\end{notation}

The coordinates described above are useful for computations with the summands $\rF(D_{\sigma})$ of the \v{C}ech complex. 
We also require a way to work with the summands of the extended \v{C}ech complex. 
To that end, consider a linearly ordered subset $I = \{p_0 \preceq \cdots \preceq p_n\}$ of $\rP_0$, possibly with repetitions. 
We may equivalently regard $I$ as a poset map $[n] \to \rP_0$, where $[n]$ denotes the linearly ordered set $\{0, \dots, n\}$.
We write $I^\circ$ for the image of $I$, which is a linearly ordered subset $I^\circ \subset \rP_0$, and we set $D_I = D_{I^\circ}$.
The set $I^\circ$ is called the \emph{nondegenerate reduction} of $I$.

The point of introducing these notions is that if $\cech^+_n(X, D; \rF)$ is the degree $n$ term of the extended \v{C}ech complex, then 
\[
    \cech^+_n(X, D; \rF) = \bigoplus_{I\col[n] \to \rP_0} \rF(D_I), \quad \rF(D_I) = \bigoplus_{\sigma^\circ} \rF(D_{\sigma^\circ}),
\]
as $D_{\sigma^\circ}$ runs over the connected components of $D_{I^\circ}$.
The following definition is convenient:

\begin{definition}
\label{def:extended-strata}
    In Situation~\ref{sit:ordered-simplicial}, we define the set of \emph{extended strata} $\rP_n^+$ as the set of pairs $\sigma = (\sigma^\circ, I \col [n] \twoheadrightarrow \vertex(\sigma^\circ))$ such that $D_{\sigma^\circ}$ is a connected component of $D_{I^\circ}$.
\end{definition}

The extended strata index the summands of the extended \v{C}ech complex:
\[
    \cech_n^+(X, D; \rF) = \bigoplus_{\sigma \in \rP_n^+} \rF(D_{\sigma}) ,
\]
where we have set $\rF(D_{\sigma}) := \rF(\sigma^\circ)$.
The stratum $\sigma^\circ$ is called the \emph{nondegenerate reduction} of the extended stratum $\sigma$.

\begin{notation}[Coordinates for extended strata]
    An extended stratum $\tau = (\tau^\circ, I)$ can be specified by allowing repetitions in the coordinate scheme introduced above (Notation~\ref{notation:coordinates}).
    In other words, for a chain $p_0 \preceq \cdots \preceq p_n$ in $\rP_0$, the expression
    \[
        \tau = [p_0, \dots, p_n]_{\sigma}
    \]
    refers to the extended stratum $\tau = (\tau^\circ, I)$, where $I:[n] \to \rP_0$ is a poset map whose image is contained in $\vertex(\sigma)$.
    The image of $I$ determines a stratum $\tau^\circ \leq \sigma$ such that $I$ surjects onto $\vertex(\tau^\circ)$.
\end{notation}

An extended stratum $\sigma = (\sigma^\circ, I)$ is \emph{degenerate} if $I \col [n] \to \rP_0$ is not injective.
We write $\cech^{deg}_n(X, D; \rF) \subset \cech_n^+(X, D; \rF)$ for the subobject spanned by $\rF(D_{\sigma})$, as $\sigma$ ranges over the degenerate extended strata. 

\begin{lemma}
\label{lem:e-equals-c-plus-d} 
Let $(X, D)$ be as in Situation~\ref{sit:ordered-simplicial}.
\begin{enumerate} 
    \item The subobjects $\cech^{deg}_n(X, D; \rF) \subset \cech^+_n(X, D; \rF)$, $n \geq 0$, form a subcomplex $\cech^{deg}(X, D; \rF)$ of $\cech^+(X, D; \rF)$.
    \item There is an isomorphism of chain complexes
    \[
        \cech^+(X, D; \rF) = \cech(X, D; \rF) \oplus \cech^{deg}(X, D; \rF).
    \]
    \item The inclusion $\iota\col\cech(X, D; \rF) \to \cech^+(X, D; \rF)$ is a homotopy equivalence, with homotopy inverse given by the projection $\pi\col\cech^+(X, D; \rF) \to \cech(X, D; \rF)$. 
\end{enumerate}
\end{lemma}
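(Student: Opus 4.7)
The plan is to mimic the proof of Lemma~\ref{lem:inclusion-of-nondegenerate}, transporting it from chains in $\rP$ to extended strata in $\rP_\bullet^+$. The first step is to introduce face and degeneracy operations: in the coordinates of Notation~\ref{notation:coordinates}, write an extended stratum as $\tau = [p_0, \dots, p_n]_{\sigma^\circ}$ and set $d_j \tau = [p_0, \dots, \widehat{p_j}, \dots, p_n]_{\sigma^\circ}$ and $s_i \tau = [p_0, \dots, p_i, p_i, \dots, p_n]_{\sigma^\circ}$. Both produce well-defined extended strata; their nondegenerate reductions coincide with $\sigma^\circ$ except in the case that $d_j$ removes the last copy of some $p_j$, when the reduction enlarges to a stratum $\tau^\circ \geq \sigma^\circ$. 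On $\rF$-values, $s_i$ acts as the identity and $d_j$ acts either as the identity or as $\rho_{\tau^\circ, \sigma^\circ}$ from Proposition~\ref{prop:existence}, so $d = \sum_j (-1)^j d_j$ is a well-defined differential on $\cech^+(X, D; \rF)$ whose restriction to $\cech(X, D; \rF)$ is the \v{C}ech differential.

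For part~(1), I would verify that $d$ preserves $\cech^{deg}(X, D; \rF)$ by the pair-cancellation argument from Lemma~\ref{lem:inclusion-of-nondegenerate}: if $\tau$ has a unique consecutive repetition $p_\ell = p_{\ell+1}$, then $d_\ell \tau$ and $d_{\ell+1} \tau$ give the same nondegenerate extended stratum, carry identical $\rF$-components, and appear with opposite signs in $d\tau$; any further repetition in $\tau$ already forces each $d_j\tau$ to remain degenerate. Part~(2) is then immediate from the partition of $\rP_n^+$ into nondegenerate and degenerate extended strata, together with the direct sum definition of $\cech^+(X, D; \rF)$; in particular $\iota$ and $\pi$ are automatically chain maps.

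For part~(3), the plan is to exhibit the standard simplicial prism operator $h = \sum_{i=0}^{n} (-1)^i s_i$ as a chain homotopy between $\id$ and $\iota \circ \pi$, verifying $dh + hd = \id - \iota \circ \pi$ via the usual simplicial identities $d_i s_j = s_{j-1} d_i$ for $i<j$, $d_j s_j = d_{j+1} s_j = \id$, and $d_i s_j = s_j d_{i-1}$ for $i > j+1$. These identities are immediate on coordinate tuples, and they lift to $\rF$-values because each face or degeneracy is an identity or one of the $\rho$-morphisms, whose functoriality is provided by Proposition~\ref{prop:existence}. The main obstacle, which is purely notational, is to track the interaction between removing a repeated entry and enlarging the nondegenerate reduction along $\rho_{\tau^\circ, \sigma^\circ}$; since this behavior is already built into the \v{C}ech differential, no new input beyond Lemma~\ref{lem:inclusion-of-nondegenerate} is required.
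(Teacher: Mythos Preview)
Your approach is the same as the paper's: both reduce to the argument of Lemma~\ref{lem:inclusion-of-nondegenerate} after introducing degeneracy maps $s_i$ that repeat a coordinate, and your treatment of parts~(1) and~(2) is correct.

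One caution on part~(3): the operator $h = \sum_{i=0}^n (-1)^i s_i$ does \emph{not} yield the homotopy relation $dh + hd = \id - \iota\pi$. On the degenerate extended stratum $\tau = [p,p]_\sigma$ one has $s_0\tau = s_1\tau = [p,p,p]_\sigma$, so $h(\tau) = 0$; similarly $d_0\tau = d_1\tau = [p]_\sigma$ gives $d(\tau) = 0$, whence $(dh + hd)(\tau) = 0 \neq \tau = (\id - \iota\pi)(\tau)$. The paper's proof of Lemma~\ref{lem:inclusion-of-nondegenerate} records the same formula, so you are faithfully following its sketch; but if you actually attempt the verification via simplicial identities that you outline, it will not close up. The standard repair is to filter the degenerate subcomplex by $F^p_n = \sum_{i \le p} s_i(\cech^+_{n-1})$ and contract each graded piece, or simply to invoke the normalization theorem for simplicial objects in additive categories. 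Either way no new geometric input is needed, so your overall plan is sound.
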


\begin{proof}
    The proof is analogous to the proof of Lemma~\ref{lem:inclusion-of-nondegenerate}.
    For instance, one defines degeneracy maps $s_i$ by taking $[p_0, \dots, p_n]_{\sigma}$ to $[p_0, \dots, p_i, p_i, \dots, p_n]_{\sigma}.$
\end{proof}

Now we relate the extended \v{C}ech complex with $\Sd^+(X, D; \rF)$. 
To do so, we first explain the relation between $\Sd^+_n(\rP)$ and $\rP^+$.
There is a map 
\[
    \max : \rP_n \to \rP_0,
\]
given by sending $\sigma \in \rP$ to the maximal element of $\vertex(\sigma)$ with respect to the chosen ordering on $\rP_0$. 
It is clear that $\max$ is a poset map,
and so for each $n \geq 0$, one has the \emph{last vertex map}
\[
    \lambda^+:\Sd_n^+(\rP) \to \rP_n^+, \quad (\sigma_0 \leq \cdots \leq \sigma_n) \mapsto [\max(\sigma_0), \dots, \max(\sigma_n)]_{\sigma_n}.
\]
The last vertex map upgrades to a map of complexes:

\begin{definition}
\label{def:last-vertex-map}
    In Situation~\ref{sit:ordered-simplicial},
    the \emph{last vertex map}
    \[
        \lambda^+:\Sd^+(X, D; \rF) \to \cech^+(X, D; \rF)
    \]
    is given on each summand $\rF(\bsigma)$ by the natural map $\rF(\bsigma) = \rF(\sigma_n) \to \rF(\lambda(\bsigma))$, which comes from applying $\rF$ to the relation $\lambda(\bsigma) \leq \sigma_n$.

    Similarly, we define
    \[
        \lambda:\Sd(X, D; \rF) \to \cech(X, D; \rF)
    \]
    for the composition of the inclusion $\Sd \to \Sd^+$, $\lambda^+ \col \Sd^+ \to \cech^+$, and the projection $\cech^+ \to \cech$.
\end{definition}

\begin{definition}
\label{def:subdivision-map}
    Let $\fS_{n + 1}$ be the symmetric group on $n + 1$ letters, which acts on $[n]$ via permutation. 
    For each $g \in \fS_{n + 1}$ and each $\sigma \in \rP_n^+$ with associated $n$-chain $p_0 \preceq \cdots \preceq p_n$ in $\rP_0$,
    we define $\sd(\sigma, g)$ as the $n$-chain
    \[
        \left( [p_{g(0)}]_{\sigma}^\circ \leq [p_{g(0)}, p_{g(1)}]_{\sigma}^{\circ} \leq \cdots \leq [p_{g(0)}, \dots, p_{g(n)}]^{\circ}_{\sigma} \right),
    \]
    where we write $(-)^{\circ}$ for the nondegenerate reduction.
    Note that the last term of the chain $\sd(\sigma, g)$ is simply $\sigma^\circ$, so $\rF(\sd(\sigma, g)) = \rF(\sigma^\circ)$ by definition.

    The \emph{subdivision map}
    \[
        \sd^+:\cech^+(X, D; \rF) \to \Sd^+(X, D; \rF)
    \]
    is given on $\rF(\sigma)$ by
    \[
        \sum_{g \in \fS_{n + 1}} (-1)^g \cdot \left[\rF(\sigma) \to \rF(\sd(\sigma, g))\right],
    \]
    where $\rF(\sigma) \to \rF(\sd(\sigma, g)) = \rF(\sigma^\circ)$ is the isomorphism.
    It is straightforward to check that $\sd$ is indeed a map of complexes.

    We write
    \[
        \sd:\cech(X, D; \rF) \to \Sd(X, D; \rF)
    \]
    for the composition of the inclusion $\cech \to \cech^+$, $\sd^+ \col \cech^+ \to \Sd^+$, and the projection $\Sd^+ \to \Sd$.
\end{definition}

\begin{theorem}[\v{C}ech comparison]
\label{thm:cech-comparison}
    Let $(X, D)$ be an ordered, vertical simplicial toroidal embedding over $R$ or $k$.
    Then 
    \[
        \lambda : \Sd(X, D; \rF) \to \cech(X, D; \rF)
    \]
    is a homotopy equivalence, with homotopy inverse given by $\sd$.
\end{theorem}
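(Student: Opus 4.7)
The plan is in two steps: first, verify $\lambda \circ \sd = \mathrm{id}_{\cech(X, D; \rF)}$ by direct computation; second, construct an explicit chain homotopy establishing $\sd \circ \lambda \simeq \mathrm{id}_{\Sd(X, D; \rF)}$. The second step will be the main obstacle.

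For the first step, I fix a generator of $\cech_n$ corresponding to $\sigma \in \rP_n$ with vertex set $\vertex(\sigma^\circ) = \{p_0 \prec \cdots \prec p_n\}$. Since the $p_i$ are distinct, each chain $\sd(\sigma, g)$ is strictly increasing and hence lies in $\Sd_n$, so no projection is needed in the definition of $\sd(\sigma)$. A short computation gives $\lambda(\sd(\sigma, g)) = [q_0^g, \ldots, q_n^g]_{\sigma^\circ}$, where $q_i^g = \max\{p_{g(0)}, \ldots, p_{g(i)}\}$. This extended stratum is nondegenerate exactly when $g(i) > \max\{g(0), \ldots, g(i-1)\}$ for every $i > 0$, which forces $g$ to be the identity permutation; in this case the output is $\sigma$ itself. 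All remaining summands in the signed sum lie in $\cech^{deg}$ and are annihilated by the projection $\pi\col \cech^+ \to \cech$, giving $\lambda \sd(\sigma) = \sigma$.

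For the second step, I construct a chain homotopy $h\col \Sd_n \to \Sd_{n+1}$ with $\partial h + h \partial = \mathrm{id} - \sd \circ \lambda$ by adapting the classical simplicial prism decomposition. The natural setting for the construction is the extended complex $\Sd^+$: for each $\bsigma = (\sigma_0 < \cdots < \sigma_n)$, the homotopy $h(\bsigma)$ is a signed sum of chains of length $n + 2$ in $\Sd^+$, each of which interpolates between $\bsigma$ and one of the terms of $\sd \lambda(\bsigma)$. A representative prism term is the chain whose first $i + 1$ entries are $[\max(\sigma_0), \ldots, \max(\sigma_j)]_{\sigma_n}^\circ$ for $j = 0, \ldots, i$ (with the convention that $[\,\cdot\,]_{\sigma_n}^\circ$ denotes the nondegenerate reduction) and whose remaining entries are $\sigma_i, \sigma_{i+1}, \ldots, \sigma_n$; the full homotopy is a signed average of analogous terms taken over both the prism index $i$ and the permutation index used to define $\sd$. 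Descending to $\Sd$ via Lemma~\ref{lem:inclusion-of-nondegenerate} then gives the desired chain homotopy.

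The main obstacle is the verification of the chain-homotopy identity in Step 2. The combinatorial pattern mirrors the classical proof that barycentric subdivision is chain-homotopic to the identity, but transferring it to our additive-category setting demands careful bookkeeping on two fronts: tracking signs arising from both the prism position and the permutation index, and ensuring that the face maps interact correctly with the structure morphisms $\rho_{\sigma, \tau}$ from Proposition~\ref{prop:existence}. Working at the level of $\Sd^+$ rather than $\Sd$ directly is what allows the classical simplicial formalism to be applied without modification, at the cost of having to handle degenerate chains throughout the verification before projecting to the non-extended complex.
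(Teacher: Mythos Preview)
Your proposal is correct and follows essentially the same approach as the paper. Both arguments verify $\lambda \circ \sd = \id$ by observing that $\lambda^+(\sd(\sigma,g))$ is degenerate unless $g = \id$, and both construct the homotopy for $\sd^+ \circ \lambda^+ \simeq \id$ on $\Sd^+$ via exactly the prism-type chains you describe: the paper's formula is $h|_{\rF(\bsigma)} = \sum_{i=0}^n \sum_{g \in \fS_{i+1}} (-1)^i(-1)^g [\rF(\bsigma) \to \rF(\sd(\sigma,g) \leq_i \bsigma)]$, which matches your outline (with the permutation ranging over $\fS_{i+1}$ rather than $\fS_{n+1}$, a point your phrase ``permutation index used to define $\sd$'' leaves slightly ambiguous).
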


\subsection{Proof of the comparison}

We begin by showing that 
\[
    \lambda \circ \sigma
\]
is equal to the identity. This follows from the observation that if $\sigma \in \Sd^+_n(\rP)$ is nondegenerate, then $\lambda^+(\sd(\sigma, g))$ is degenerate unless $g = 1 \in \fS_{n + 1}$.

It remains to show that $\sd \circ \lambda$ is homotopic to the identity. By Lemma~\ref{lem:inclusion-of-nondegenerate} and Lemma~\ref{lem:e-equals-c-plus-d}, we reduce to showing that $\sd^+ \circ \lambda^+$ is homotopic to the identity.

\begin{notation}
    Given two $n$-chains $\bsigma = (\sigma_0 \leq \dots)$ and $\bm{\tau} = (\tau_0 \leq \dots)$ such that $\sigma_i \leq \tau_i$, we write 
    \[
        \bsigma \leq_i \bm{\tau} = (\sigma_0 \leq \cdots \leq \sigma_i \leq \tau_i \leq \cdots \leq \tau_n),
    \]
    which is an $(n + 1)$-chain.
\end{notation}

Let $\bsigma = \left(\sigma_0 \leq \cdots \leq \sigma_n\right)$ be an $n$-chain. Let $\sigma = \lambda^+(\bsigma)$. In what follows, given $0 \leq i \leq n$ and $g \in \fS_{i + 1}$, we regard $g$ as an element of $\fS_{n + 1}$ which fixes $i + 1, \dots, n$, so that the notation $\sd(\sigma, g)$ is defined. 

\begin{definition}
\label{def:sd_lambda_homotopy}
    For each $n \geq 0$, we define a morphism 
    \[
        h:\Sd_{n}(X, D; \rF) \to \Sd_{n + 1}(X, D; \rF)
    \]
    as follows. 
    Given $\bsigma = (\sigma_0 \leq \cdots \leq \sigma_n) \in \Sd_{n}(\rP)$, and we write $\sigma = \lambda^+(\bsigma)$ as in the previous paragraph.
    Then $h$ is given on $\rF(\bsigma)$ by the map
    \[
        h|_{\rF(\bsigma)} = \sum_{i = 0}^n \sum_{g \in \fS_{i + 1}} (-1)^i (-1)^g \left[ \rF(\bsigma) \to \rF(\sd(\sigma, g) \leq_i \bsigma) \right].
    \]
    Here, both $\rF(\bsigma)$ and $\rF(\sd(\sigma, g) \leq_i \bsigma)$ are, by definition, $\rF(\sigma_n)$, and the map on the right-hand side is the identity map. 
\end{definition}

We show that $h$ is a homotopy between $\sd^+ \circ \lambda^+$ and the identity, i.e., that
\begin{equation} \label{eq:homotopy_relation}
    \partial \circ h + h \circ \partial = \id - \sd^+ \circ \lambda^+.
\end{equation}
It suffices to check \eqref{eq:homotopy_relation} on each summand $\rF(\bsigma)$ of $\Sd^+_{n}(X, D; \rF)$. 

We first compute $\partial \circ h$:
\begin{equation} \label{eq:partial_of_homotopy}
    \partial \circ h|_{\rF(\bsigma)} = \sum_{i = 0}^n \sum_{j = 0}^{n + 1} \sum_{g \in \fS_{i + 1}} (-1)^{i + j} (-1)^g d_j \circ \left[ \rF(\bsigma) \to \rF(\sd(\sigma, g) \leq_i \bsigma) \right]
\end{equation}
Regarding a term of \eqref{eq:partial_of_homotopy} as being specified by a triple $(i, j, g)$, we break up the sum into six parts:
\[
\begin{array}{rlcrl}
    A: & i = 0, j = 0 & & B: & i = n, j = n + 1  \\
    C: & i > j & & D: &  j \geq i + 2 \\
    UD: & 1 \leq j = i + 1 & & LD: & 1 \leq i = j \leq n
\end{array}
\]
Next, we compute $h \circ \partial$:
\begin{equation} \label{eq:homotopy_of_partial}
    h \circ \partial|_{\rF(\bsigma)} = \sum_{i = 0}^{n - 1} \sum_{j = 0}^n \sum_{g \in \fS_{i + 1}} (-1)^{i + j} (-1)^g \left[ \rF(d_j\bsigma) \to \rF(\sd(d_j\sigma, g) \leq_i d_j\bsigma) \right]
\end{equation}
We break \eqref{eq:homotopy_of_partial} into two parts:
\begin{align*}
    LT &: i \geq j \\
    UT &: j \geq i + 1
\end{align*}

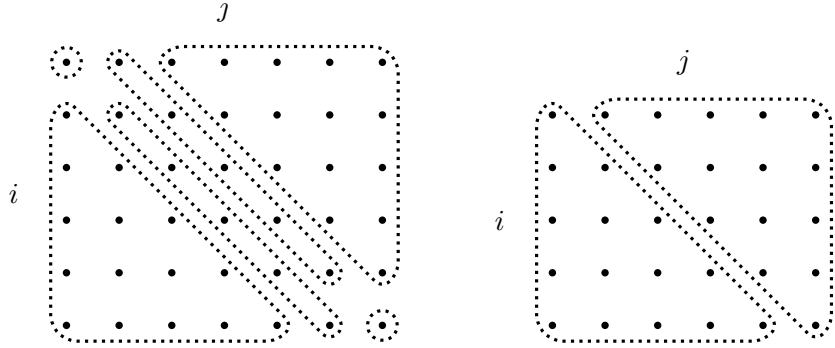
\begin{figure}
\usetikzlibrary{patterns}

	\begin{tikzpicture}[scale=0.7]
  \def\nrows{6}
  \def\ncols{7}
  
  \coordinate (top_left) at (1.7,5.0); 
  \coordinate (top_right) at (2,5.3); 
  \coordinate (bottom_left) at (6,0.7); 
  \coordinate (bottom_right) at (6.3,1); 

  \coordinate (top_left_2) at (1.7,6.0); 
  \coordinate (top_right_2) at (2,6.3); 
  \coordinate (bottom_left_2) at (6,1.7); 
  \coordinate (bottom_right_2) at (6.3,2); 
  
  \draw[very thick, dotted, rounded corners=4pt] (top_left) -- (top_right) -- (bottom_right) -- (bottom_left) -- cycle;

  \draw[very thick, dotted, rounded corners=4pt] (top_left_2) -- (top_right_2) -- (bottom_right_2) -- (bottom_left_2) -- cycle;

  \draw[very thick, dotted] (1,6) circle (8pt); 

  \draw[very thick, dotted] (7,1) circle (8pt); 

  \coordinate (top_left_3) at (0.7,5.0); 
  \coordinate (top_right_3) at (1,5.3); 

  \coordinate (bottom_left_3) at (5,0.7); 
  \coordinate (bottom_right_3) at (5.3,1); 

  \coordinate (corner_left_3) at (0.7,1); 
  \coordinate (corner_right_3) at (1,0.7); 

  \draw[very thick, dotted, rounded corners=4pt] (top_left_3) -- (top_right_3) -- (bottom_right_3) -- (bottom_left_3) -- (corner_right_3) -- (corner_left_3) -- cycle;

   \coordinate (top_left_4) at (2.7,6.0); 
  \coordinate (top_right_4) at (3,6.3); 

  \coordinate (bottom_left_4) at (7,1.7); 
  \coordinate (bottom_right_4) at (7.3,2); 

  \coordinate (corner_left_4) at (7.3,6); 
  \coordinate (corner_right_4) at (7,6.3); 

  \draw[very thick, dotted, rounded corners=4pt] (top_right_4) -- (top_left_4) -- (bottom_left_4) -- (bottom_right_4) -- (corner_left_4) -- (corner_right_4) -- cycle;

   \node at (4,7) {$j$};
  \node at (0,3.5) {$i$};

  \foreach \row in {1,2,3,4,5,6} {
    \foreach \col in {1,2,3,4,5,6,7} {
      \fill (\col,\row) circle (2pt); 
    }
  }

\end{tikzpicture} \hspace{25pt}  \begin{tikzpicture}[scale=0.7] 

\coordinate (top_left_3) at (0.7,5.0); 
  \coordinate (top_right_3) at (1,5.3); 

  \coordinate (bottom_left_3) at (5,0.7); 
  \coordinate (bottom_right_3) at (5.3,1); 

  \coordinate (corner_left_3) at (0.7,1); 
  \coordinate (corner_right_3) at (1,0.7); 

  \draw[very thick, dotted, rounded corners = 4 pt] (top_left_3) -- (top_right_3) -- (bottom_right_3) -- (bottom_left_3) -- (corner_right_3) -- (corner_left_3) -- cycle;

   \coordinate (top_left_4) at (1.7,5.0); 
  \coordinate (top_right_4) at (2,5.3); 

  \coordinate (bottom_left_4) at (6,0.7); 
  \coordinate (bottom_right_4) at (6.3,1); 

  \coordinate (corner_left_4) at (6.3,5); 
  \coordinate (corner_right_4) at (6,5.3); 

  \draw[very thick, dotted, rounded corners = 4 pt] (top_right_4) -- (top_left_4) -- (bottom_left_4) -- (bottom_right_4) -- (corner_left_4) -- (corner_right_4) -- cycle;

  \node at (3.5,6) {$j$};
  \node at (0,3) {$i$};

  \foreach \row in {1,2,3,4,5} {
    \foreach \col in {1,2,3,4,5,6} {
      \fill (\col,\row) circle (2pt); 
    }
  }

\end{tikzpicture}

\caption{\label{figure}
On the left, the decomposition of $\partial \circ h|_{\rF(\bsigma)}$ into $A, B, C, D, UD$, and $LD$. On the right, the decomposition of $h \circ \partial|_{\rF(\bsigma)}$ into $LT$ and $UT$. The case $n = 6$ is shown.}
\end{figure}
The situation is rendered in Figure~\ref{figure}. To verify the homotopy relation \eqref{eq:homotopy_relation}, and hence to complete the proof of Theorem~\ref{thm:cech-comparison}, it suffices to prove the following:

\begin{lemma}
\label{lem:identities}
    With notation as above, the following identities hold:
    \begin{align}
        A &= \id \label{eq:a_relation} \\ 
        B &= - \sd^+ \circ \lambda^+ \label{eq:b_relation} \\
        C &= 0 \label{eq:c_relation} \\
        D + UT &= 0 \label{eq:dut_relation} \\
        UD + LD + LT &= 0 \label{big_relation}
    \end{align}
\end{lemma}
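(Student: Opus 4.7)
The plan is to verify each identity by direct computation of chains and signs, using pairing strategies of varying complexity. Identities \eqref{eq:a_relation} and \eqref{eq:b_relation} reduce to bookkeeping: the $A$ sum has a unique term, with $(i, j, g) = (0, 0, 1)$, where the face $d_0$ strips the leading entry $[p_0]^\circ_\sigma$ from $\sd(\sigma, 1) \leq_0 \bsigma$, recovering $\bsigma$ with the identity on $\rF$. For $B$, the face $d_{n+1}$ removes $\sigma_n$ from the chain $\sd(\sigma, g) \leq_n \bsigma = (\tau_0, \dots, \tau_n, \sigma_n)$ and contributes the map $\rho_{\sigma^\circ, \sigma_n}$, where $\sigma^\circ$ is the nondegenerate reduction of $\sigma = \lambda^+(\bsigma)$. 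Summing over $g \in \fS_{n+1}$ with signs $(-1)^{n+(n+1)+g} = -(-1)^g$ reproduces $-\sd^+ \circ \lambda^+$.

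For \eqref{eq:c_relation}, I will pair each $g \in \fS_{i+1}$ with $g \cdot (j, j+1)$, which is well-defined because $j < i$ implies $j+1 \leq i$. The face $d_j$ removes $\tau_j$ from $\sd(\sigma, g) \leq_i \bsigma$, leaving $\tau_{j-1}$ and $\tau_{j+1}$ as the neighbors of the gap; these depend only on the sets $\{g(0), \dots, g(j-1)\}$ and $\{g(0), \dots, g(j+1)\}$, each unchanged by swapping $g(j)$ and $g(j+1)$. The resulting chains and $\rF$-maps agree (since $d_j$ is the identity on $\rF$ for $j < n+1$), while the signs of the two permutations differ by $-1$. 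For \eqref{eq:dut_relation}, I pair $(i, j, g) \in D$ with $(i, j-1, g) \in UT$; unwinding the definitions yields
\[
    d_j\bigl(\sd(\sigma, g) \leq_i \bsigma\bigr) = \sd(d_{j-1}\sigma, g) \leq_i d_{j-1}\bsigma,
\]
with matching $\rF$-maps (in particular, when $j = n+1$ both sides involve $\rho_{\sigma_{n-1}, \sigma_n}$), and the signs $(-1)^{i+j+g}$ and $(-1)^{i+(j-1)+g}$ cancel.

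For the most elaborate identity \eqref{big_relation}, I will partition the $LD$ terms by the value $k = g'(i')$ of $g' \in \fS_{i'+1}$. The terms with $k = i'$ pair with $UD$: given $g \in \fS_{i+1}$ (where $i = i' - 1$), the extension $\tilde g \in \fS_{i+2}$ with $\tilde g(i+1) = i+1$ produces an $LD$ term whose chain $(\tau_0, \dots, \tau_i, \sigma_{i+1}, \dots, \sigma_n)$ agrees with that of the $UD$ term, since each $\tau_j$ depends only on the underlying set $\{g(0), \dots, g(j)\}$; the signs $-(-1)^g$ and $(-1)^{\tilde g} = (-1)^g$ cancel. The terms with $k < i'$ pair with $LT$: setting $g'' = \phi \circ g'|_{\{0, \dots, i\}} \in \fS_{i+1}$ for the order-preserving bijection $\phi \colon \{0, \dots, i'\} \setminus \{k\} \xrightarrow{\sim} \{0, \dots, i\}$, the triple $(i, k, g'')$ with $i = i' - 1$ is the matching $LT$ index. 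The chains match by the same set-dependence of the $\tau_j$; the signs cancel once one verifies $(-1)^{g'} = (-1)^{i'-k} (-1)^{g''}$, which follows from the factorization $g' = \kappa \circ \tilde{g''}$ with $\kappa = (k, k+1, \dots, i')$ a cycle of length $i' + 1 - k$.

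The main obstacle is the sign bookkeeping in the final identity, particularly relating $(-1)^{g'}$ to $(-1)^{g''}$ via the cyclic decomposition; the remaining identities become routine once the correct pairings are identified.
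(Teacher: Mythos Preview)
Your proposal is correct and follows essentially the same approach as the paper's own proof: the same pairings are used for each identity (the transposition $(j,j+1)$ for $C$, the shift $(i,j)\mapsto(i,j-1)$ for $D+UT$, and the split of $LD$ according to whether $g(i)=i$ for the final identity), and your sign computation $(-1)^{g'}=(-1)^{i'-k}(-1)^{g''}$ via the cycle $\kappa$ is exactly the paper's computation $(-1)^{g'}=(-1)^g\cdot(-1)^{i-g(i)}$ in different notation. The only cosmetic difference is that you describe some of the bijections in the reverse direction (starting from $UD$ or $LT$ and extending, rather than starting from $LD$ and restricting), which is immaterial.
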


\begin{proof}[Proof]
    Throughout the proof, we will frequently use the following observation: 
    The maps \eqref{eq:partial_of_homotopy} and \eqref{eq:homotopy_of_partial} are sums of face maps $d_j$, 
    since each map 
    \[
        \rF(\bsigma) \to \rF(\sd(\sigma, g) \leq_i \bsigma) 
    \]
    appearing in the definition of $h$ is simply the identity map on the object $\rF(\bsigma_n)$, as explained above. 
    It follows, for example, that in the expression $\partial \circ h + h \circ \partial$, a term $(-1)^r[\rF(\bsigma) \to \rF(\bm{\tau})]$ of the sum \eqref{eq:partial_of_homotopy} cancels with a term $(-1)^s[\rF(\bsigma) \to \rF(\bm{\tau}')]$ of the sum \eqref{eq:homotopy_relation} if and only if $\tau = \tau'$, and $r + s \equiv 0 \mod 2$. 
    (The same observation applies also to the case when both terms are from \eqref{eq:homotopy_of_partial} or from \eqref{eq:partial_of_homotopy}.) 
    We use this without comment in what follows.

    The first two identities \eqref{eq:a_relation} and \eqref{eq:b_relation} are from unwinding definitions. For \eqref{eq:c_relation}, we observe that the map
    \[
        (i, j, g) \mapsto (i, j, g') \quad g' = g \circ (j \;\; j + 1)
    \]
    is an involution of the set of terms of the sum $C$. We claim that the terms of $C$ cancel in the pairs given by the involution. 

    The coefficient of the $(i, j, g)$ term has opposite sign to the coefficient of the $(i, j, g')$ term, so by the discussion at the beginning of the proof, it suffices to show that at the level of $n$-chains, 
    \[
        d_j\left(\sd(\sigma, g) \leq_i \bsigma\right) = d_j\left(\sd(\sigma, g') \leq_i \bsigma \right),
    \]
    which we now check. Writing $\vertex(\sigma) = \{p_0 \preceq \cdots \preceq p_n\}$, 
    \begin{align*}
        d_j(\sd(\sigma, g) \leq_i \bsigma) &= 
        d_j( \cdots \underbrace{\leq [p_{g(0)}, \dots, p_{g(j)}]_{\sigma}^{\circ}}_{j} \leq [p_{g(0)}, \dots, p_{g(j + 1)}]^{\circ}_{\sigma} \leq \cdots \leq \bsigma_i \leq \cdots ) \\
            &= d_j( \cdots \leq \underbrace{[p_{g(0)}, \dots, p_{g(j - 1)}, p_{g(j + 1)}]^{\circ}_{\sigma}}_{j} \leq [p_{g(0)}, \dots, p_{g(j + 1)}]^{\circ}_{\sigma} \leq \cdots \leq \bsigma_i \leq \cdots ) \\
            &= d_j(\sd(\sigma, g') \leq_i \bsigma).
    \end{align*}
    Here, the subtext indicates the position in the chain.

    Next, we prove \eqref{eq:dut_relation}. There is a bijection from the terms of $D$ to the terms of $UT$ given by 
    \[
        (i, j, g) \mapsto (i, j - 1, g).
    \]
    We claim that each term of $D$ cancels with the term of $UT$ given by its image under the bijection. As above, one reduces to showing that, on the level of $n$-chains, 
    \[
        d_j(\sd(\sigma, g) \leq_i \bsigma) = \sd(d_{j - 1}\sigma, g) \leq_i d_{j - 1}(\bsigma).
    \]
    This is done by direct computation:
    \begin{align*}
        d_j(\sd(\sigma, g) \leq_i \bsigma) &= d_j(\cdots \leq [p_{g(0)}, \dots, p_{g(i)}]_{\sigma}^{\circ} \leq \bsigma_i \leq \cdots \leq \bsigma_{j - 1} \leq \cdots) \\
            &= (\cdots \leq [p_{g(0)}, \dots, p_{g(i)}]_{\sigma}^{\circ} \leq \bsigma_i \leq \cdots \leq \bsigma_{j - 2} \leq \bsigma_{j} \leq \cdots) \\
            &= (\cdots \leq [p_{g(0)}, \dots, p_{g(i)}]_{d_{j - 1}\sigma}^{\circ} \leq (d_{j - 1}\bsigma)_i \leq \cdots \leq (d_{j - 1}\bsigma)_{j - 2} \leq (d_{j - 1}\bsigma)_{j - 1}  \leq \cdots) \\
            &= \sd(d_{j - 1}\sigma, g) \leq_i d_{j - 1}(\bsigma).
    \end{align*}

    Finally, we turn to \eqref{big_relation}. We begin by breaking $LD$ into two parts: 
    \begin{align*}
        LD_I &= \{(i, i, g) \mid \quad g(i) = i\}, \\
        LD_{II} &= \{(i, i, g) \mid \quad g(i) \neq i\}.
    \end{align*}
    We show that $LD_I + UD = 0$ and $LD_{II} + LT = 0$. 

    First, let us show that $LD_I + UD = 0$. There is a bijection from the set of terms of $LD_I$ to the set of terms of $UD$ given by 
    \[
        (i, i, g) \mapsto (i - 1, i, g'),
    \]
    where $g' \in \Sigma_{i}$ is determined by observing that, by definition of $LD_I$, $g$ lies in the subgroup $\Sigma_i \subset \Sigma_{i + 1}$ of elements fixing $i$. 

    The fact that the terms cancel follows from the observation that their coefficients have opposite sign, along with the calculation
    \begin{align*}
        d_i\left(\sd(\sigma, g) \leq_i \bsigma\right) &= d_i(\cdots \leq [p_{g(0)}, \dots, p_{g(i - 1)}]_{\sigma}^{\circ} \leq \underbrace{[p_{g(0)}, \dots, p_{g(i)}]_{\sigma}^{\circ}}_{i} \leq  \bsigma_i \leq \cdots) \\
            &= d_i(\cdots \leq [p_{g(0)}, \dots, p_{g(i - 1)}]_{\sigma}^{\circ} \leq \underbrace{\bsigma_{i - 1}}_{i} \leq \bsigma_i \leq \cdots) \\
            &= d_{i}\left(\sd(\sigma, g') \leq_{i - 1} \bsigma\right).
    \end{align*}
    Note that the condition that $g(i) = i$ guarantees that $\{p_{g(0)}, \dots, p_{g(i - 1)}\}$ is a subset of $\vertex(\bsigma_{i - 1})$, so that the stated inequality between the two holds.

    Second, we check that $LD_{II} + LT = 0$. We consider a bijection from the terms of $LD_{II}$ to the terms of $LT$ given by 
    \[
        (i, i, g) \mapsto \left(i - 1, g(i), g'\right), 
    \]
    where $g'$ is given as follows: First, consider the composition 
    \[
        g'' =  \left( i \;\; \cdots \;\; g(i) \right) \circ g
    \]
    of $g$ with the cycle $\left( i \;\; \cdots \;\; g(i) \right)$. Then $g''$ fixes $i$, and the restriction of $g''$ to an element of $\fS_{i}$ is the permutation $g'$. Since
    \[
        (-1)^{g'} = (-1)^g \cdot (-1)^{i - g(i)},
    \]
    the coefficient of the $(i - 1, g(i), g')$ term of $LT$, which is $(-1)^{i - 1 +  g(i)} (-1)^{g'}$, has the opposite sign as the coefficient of the $(i, i, g)$ term of $LD_{II}$, which is $(-1)^{2i}(-1)^g = (-1)^g$.

    As in the previous cases, it remains to check equality of $n$-chains. Writing $\sigma' = d_{g(i)} \sigma$ for simplicity, we see that for any $0 \leq k \leq i - 1$,
    \[
        [p_{g(0)}, \dots, p_{g(k)}]_{\sigma} = [p_{g'(0)}, \dots, p_{g'(k)}]_{\sigma'} \in \rP^+_{i - 1}.
    \]
    To show equality of $n$-chains, we calculate:
    \begin{align*}
        \sd(\sigma', g') \leq_{i - 1} d_{g(i)}(\bsigma) &= ([p_{g'(0)}]^{\circ}_{\sigma'} \leq \cdots \leq [p_{g'(0)}, \dots, p_{g'(i - 1)}]^{\circ}_{\sigma'}  \leq \bsigma_{i} \leq \cdots ) \\
        &= ([p_{g(0)}]^{\circ}_{\sigma} \leq \cdots \leq [p_{g(0)}, \dots, p_{g(i - 1)}]^{\circ}_{\sigma}  \leq \bsigma_{i} \leq \cdots) \\ 
        &= d_i([p_{g(0)}]^{\circ}_{\sigma} \leq \cdots \leq [p_{g(0)}, \dots, p_{g(i)}]^{\circ}_{\sigma}  \leq \bsigma_{i} \leq \cdots).
    \end{align*}
    This concludes the proof of Lemma~\ref{lem:identities}, and also of Theorem~\ref{thm:cech-comparison}.
\end{proof}

\subsection{Barycentric subdivision}
\label{ssec:barycentric-sub}

\begin{proposition}
\label{prop:barycentric-subdivision}
    Let $(X, D)$ be a vertical toroidal embedding over $R$ or $k$, and consider the barycentric subdivision $f\col(X', D') \to (X, D)$ defined in \S\ref{sec:toroidal-embeddings}.
    Then 
    \[
        \Sd(f_*) : \Sd(X', D'; \rF) \to \Sd(X, D; \rF)
    \]
    is a homotopy equivalence.
\end{proposition}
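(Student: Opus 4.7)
The plan is to express $\Sd(f_*)$ as the composition of a homotopy equivalence with an isomorphism of chain complexes, by passing through the \v{C}ech complex of $(X',D')$. The key observation is that the strata of $(X',D')$ correspond (via Lemma~\ref{lem:barycentric-is-simplicial}) to nondegenerate chains in $\rP(X,D)$, so the \v{C}ech complex of $(X',D')$ will identify canonically with $\Sd(X,D;\rF)$.

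First I verify that $(X',D')$ is an ordered simplicial toroidal embedding. Simpliciality is Lemma~\ref{lem:barycentric-subdivision}, and $(X',D')$ inherits an ordering in the sense of Definition~\ref{def:ordering-on-simplicial-toroidal-embedding} as follows: the divisors of $D'$ are in bijection with $\rP(X,D)$ (each ray of $\Sigma(X',D')$ is the barycenter of a cone of $\Sigma(X,D)$), and by Lemma~\ref{lem:barycentric-is-simplicial} a stratum corresponding to the chain $\sigma_0<\cdots<\sigma_k$ has vertex set $\{\sigma_0,\dots,\sigma_k\}$, which is linearly ordered by the $\rP(X,D)$-ordering. Theorem~\ref{thm:cech-comparison} then produces a homotopy equivalence $\lambda\colon \Sd(X',D';\rF)\xrightarrow{\simeq}\cech(X',D';\rF)$.

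Next I will construct an isomorphism of chain complexes $\Phi\colon\cech(X',D';\rF)\xrightarrow{\cong}\Sd(X,D;\rF)$. The linearly ordered subsets of $\rP_0(X',D')$ are precisely the elements of $\Sd_\bullet(\rP(X,D))$, and for such a chain $\bsigma=(\sigma_0<\cdots<\sigma_n)$, Lemma~\ref{lem:barycentric-is-simplicial} shows $D'_\bsigma$ is a single stratum $\tau_\bsigma$ with $f_*\tau_\bsigma=\sigma_n$, while Lemma~\ref{lem:fibers-of-toroidal-resolution}(\ref{item:proj-bundle-stratum}) shows $\tau_\bsigma\to D_{\sigma_n}$ is birationally a projective bundle. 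Stable birational invariance then gives an isomorphism $\rF(f|_{\tau_\bsigma})\colon\rF(\tau_\bsigma)\xrightarrow{\sim}\rF(\sigma_n)$, and assembling these summand-wise defines $\Phi$. The fact that $\Phi$ intertwines the differentials is a direct application of Proposition~\ref{prop:existence}(\ref{item:vert-functoriality}), using that $f_*\tau_{d_j\bsigma}=\sigma_n$ when $j<n$ and $f_*\tau_{d_n\bsigma}=\sigma_{n-1}$ when $j=n$.

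The proof will be completed by showing $\Sd(f_*)=\Phi\circ\lambda$ on the nose. For $\btau=(\tau_0<\cdots<\tau_n)\in\Sd_n(\rP(X',D'))$, write $f_*\btau=(f_*\tau_0,\dots,f_*\tau_n)\in\Sd_n^+(\rP)$. If $f_*\btau$ is degenerate, both sides vanish after projecting away from the degenerate subcomplex. If $f_*\btau$ is nondegenerate, then at the summand indexed by $f_*\btau$ the map $\Phi\circ\lambda$ becomes the composite $\rF(f|_{\tau_{f_*\btau}})\circ\rho_{\tau_{f_*\btau},\tau_n}$, while $\Sd(f_*)$ is $\rF(f|_{\tau_n})$; these coincide by Proposition~\ref{prop:existence}(\ref{item:vert-functoriality}), since $f_*\tau_{f_*\btau}=f_*\tau_n$ makes the outer $\rho$ the identity. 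Since $\lambda$ is a homotopy equivalence and $\Phi$ an isomorphism, $\Sd(f_*)$ will be a homotopy equivalence. The main obstacle is the bookkeeping in the above steps; every commutativity ultimately reduces to a careful application of Proposition~\ref{prop:existence}.
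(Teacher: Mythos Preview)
Your proposal is correct and follows essentially the same approach as the paper: identify $\cech(X',D';\rF)$ with $\Sd(X,D;\rF)$ via an isomorphism $\Phi$ built from the bijection of Lemma~\ref{lem:barycentric-is-simplicial}, then factor $\Sd(f_*)$ as $\Phi\circ\lambda$ and invoke Theorem~\ref{thm:cech-comparison}. Your write-up is in fact more careful than the paper's about the ordering on $(X',D')$, the chain-map property of $\Phi$, and the verification of $\Sd(f_*)=\Phi\circ\lambda$ via Proposition~\ref{prop:existence}(\ref{item:vert-functoriality}).
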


\begin{proof}

From Lemma~\ref{lem:barycentric-is-simplicial}, $(X', D')$ is simplicial. 
For $\rP' = \rP(X', D')$ and $\rP = \rP(X, D)$, Lemma~\ref{lem:barycentric-subdivision} gives a bijection
\[
    \varphi:\rP' \to \Sd(\rP),
\]
commuting with the degeneracy maps on each side. 
One feature of $\varphi$ is that it sends $\sigma' \in \rP'_n$ to a chain $\sigma_0 \leq \dots \leq \sigma_n$, where $\sigma_n = f_* \sigma'$.
In particular, there is a natural map
\[
     \rF(\sigma') \to \rF(f_* \sigma') = \rF(\varphi_*\sigma'),
 \] 
 which is an isomorphism: Since $f$ is a toroidal modification, $D'_{\sigma'_n}$ is birationally a projective bundle over $D_{\varphi_* f'_n}$ by Lemma~\ref{lem:fibers-of-toroidal-resolution}.

There is an isomorphism of complexes
    \[
        \Phi : \cech(X', D'; \rF) \to \Sd(X, D; \rF), 
    \]
given on the summand $\rF(\sigma')$ by the isomorphism $\rF(\sigma') \to \rF(\varphi(\sigma))$ described above.

Moreover, the following diagram commutes:
\[
\begin{tikzcd}
    \Sd(X, D; \rF) \ar[d, "\lambda"'] \ar[dr, "\Sd(f_*)"] \\ 
    \cech(X, D; \rF) \ar[r, "\Phi"', "\simeq"] & \Sd(X^0, D^0; \rF)
\end{tikzcd}
\]
By Theorem~\ref{thm:cech-comparison}, $\lambda$ is a homotopy equivalence, so $\Sd(f_*)$ is a homotopy equivalence.
\end{proof}

\section{Star subdivision}
\label{sec:star-subdivision}

The goal of this section is the prove the following general result:

\begin{theorem}
\label{thm:star-subdivision-at-a-stratum}
    Let $\mu\col(X', D') \to (X, D)$ be a star subdivision of a vertical simplicial toroidal embedding over $R$ or $k$. 
    Then $\Sd(\mu_*):\Sd(X', D'; \rF) \to \Sd(X, D; \rF)$ is a homotopy equivalence.
\end{theorem}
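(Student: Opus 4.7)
Plan: I would proceed by constructing an explicit chain-level homotopy inverse to $\Sd(\mu_*)$, in the spirit of the proofs of Theorem~\ref{thm:cech-comparison} and Proposition~\ref{prop:barycentric-subdivision}. The key input is a detailed understanding of the map $\mu_* \colon \rP(X', D') \to \rP(X, D)$, which behaves very explicitly under a star subdivision.

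First, using the simplicial hypothesis on $(X, D)$ and the description of the star subdivision at the stratum $\sigma$ with new ray $v$, I would write $\rP(X', D') = (\rP(X, D) \setminus \Star(\sigma)) \sqcup \{v + \rho : \rho \in \Link(\sigma)\}$. The map $\mu_*$ is the identity on the first factor and sends $v + \rho \mapsto \sigma + \rho$ on the second; notably it is \emph{not} injective, because different $\rho \in \Link(\sigma)$ which share vertices with $\sigma$ can map to the same $\sigma + \rho \in \Star(\sigma)$. By Lemma~\ref{lem:fibers-of-toroidal-resolution}, each stratum morphism $D'_{\sigma'} \to D_{\mu_* \sigma'}$ is birationally a projective bundle, so applying the stable birational invariant $\rF$ yields an isomorphism in $\cA$. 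In particular, $\Sd(\mu_*)$ acts by isomorphisms on individual summands.

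Next, I would define a section $s \colon \Sd(X, D; \rF) \to \Sd(X', D'; \rF)$. For a chain $\bsigma = (\sigma_0 < \cdots < \sigma_n)$ in $\rP(X, D)$, let $k$ be the largest index with $\sigma_k \notin \Star(\sigma)$ (possibly $k = -1$). Write $\sigma_i = \sigma + \rho_i$ for $i > k$, and let $\alpha_k$ denote the common face $\sigma_k \cap \sigma$ in the simplicial sense. Setting $s(\bsigma) = (\sigma_0, \ldots, \sigma_k, v + \alpha_k + \rho_{k+1}, \ldots, v + \alpha_k + \rho_n)$ gives a well-defined chain in $\rP(X', D')$ lifting $\bsigma$; this particular choice of $\alpha_k$ is forced by the requirement that $\sigma_k$ be a face of the next term in $\rP(X', D')$.

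The hard part is constructing a chain homotopy witnessing $s \circ \Sd(\mu_*) \simeq \id$ and $\Sd(\mu_*) \circ s \simeq \id$, since $s$ does not commute with the differential on the nose: removing an element from $\bsigma$ may shift the ``break point'' $k$ and thereby change the face $\alpha_k$ used in the lift. I would correct this by an explicit homotopy $h$ built from intermediate chains involving auxiliary strata $v + \alpha + \rho$ interpolating between the old and new choices of $\alpha_k$. Checking the homotopy relations would be a combinatorial argument analogous to Lemma~\ref{lem:identities} (the proof of the \v{C}ech comparison), where terms of $\partial h + h \partial$ are partitioned into classes and cancelled pairwise.

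The main obstacle is exactly this verification of the homotopy relations, which is delicate due to the non-injectivity of $\mu_*$ on strata. A cleaner but more abstract alternative would be to observe that the poset $\rP(X', D')$ has a global minimum element (the zero cone, corresponding to $X'$ itself) which is sent under $\mu_*$ to the corresponding minimum of $\rP(X, D)$; hence for any $\tau \in \rP(X, D)$ the slice $\mu_*^{-1}(\rP(X, D)_{\leq \tau})$ has contractible order complex (it has a minimum). A chain-complex version of Quillen's Theorem A, combined with the natural isomorphism $\rF_{\rP'} \cong \rF_{\rP} \circ \mu_*$ from the first step, would then give the desired homotopy equivalence; the trade-off is replacing an explicit construction with more abstract machinery that must be justified in the additive category $\cA$.
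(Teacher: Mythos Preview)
Your proposal has two genuine gaps.

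\textbf{The section $s$ is not a chain map.} You correctly observe that removing $\sigma_k$ from a chain $\bsigma$ shifts the break point and changes the face $\alpha_k$ used in the lift, so $ds \neq sd$. But then the expressions ``$s \circ \Sd(\mu_*) \simeq \id$'' and ``$\Sd(\mu_*) \circ s \simeq \id$'' are not meaningful: a chain homotopy only relates \emph{chain maps}, and neither composite is a chain map unless $s$ is. A homotopy cannot repair this defect. What is actually needed is a different definition of the inverse as a chain map from the start, and for that a single lift per chain will not suffice. The paper handles this by first transferring the problem to the \v{C}ech complexes via Theorem~\ref{thm:cech-comparison} (so one works with strata rather than chains of strata), and then defining the inverse $\gamma$ on a stratum $\tau \in \Star(\sigma_0)$ as an \emph{alternating sum} $\sum_k (-1)^{n+k}\gamma_{\tau,k}$ over a family of ``section strata'' $\sec(\tau,k)$ in $(X',D')$, one for each vertex of $\sigma_0$. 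It is precisely this alternating sum that makes $\gamma$ commute with the \v{C}ech differential (Lemma~\ref{lemma:inverse-map-is-complex-map}); the homotopy $h$ is then a short, explicit formula built from a single map $\epsilon_{\tau'} \colon \rF(\tau') \to \rF(\tau' \wedge b)$.

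\textbf{The Quillen-type alternative rests on a false premise.} The poset $\rP(X',D')$ does \emph{not} have a global minimum: its minimal elements are the irreducible components of $D'$, of which there are several. There is no ``zero cone'' in the cone complex of a toroidal embedding with nonempty boundary (see Construction~\ref{constr:cone-of-stratum}: the smallest cones are the rays, of dimension $1$). So the argument that $\mu_*^{-1}(\rP_{\leq \tau})$ has contractible order complex because it contains a minimum fails. One could try to salvage a fiberwise-contractibility statement by other means, but this would still require a chain-level argument in the additive category $\cA$ (not just for simplicial sets), and at that point you are essentially back to constructing an explicit homotopy inverse.
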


The statement refers implicitly to the star subdivision at a chosen primitive integral vector $v$ in the interior of the cone $|\sigma_0|$ in the cone complex of $(X, D)$, where $\sigma_0$ is a stratum.
When $(X, D)$ is an snc pair, the standard stellar subdivision at a stratum $\sigma \in \rP(X, D)$ is simply the blowup at $D_{\sigma}$. 
Hence, we obtain:

\begin{corollary}	
\label{cor:blowup-of-snc-in-stratum}
    Let $\mu\col(X', D') \to (X, D)$ be the blowup of a vertical snc pair over $R$ or $k$ in a stratum. 
    Then $\Sd(\mu_*):\Sd(X', D'; \rF) \to \Sd(X, D; \rF)$ is a homotopy equivalence.
\end{corollary}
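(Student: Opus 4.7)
The strategy is closely modeled on the proof of Proposition~\ref{prop:barycentric-subdivision}: I would reduce via the \v{C}ech comparison (Theorem~\ref{thm:cech-comparison}) to a chain-level computation that depends only on the local combinatorics of the star subdivision.

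First, since $(X, D)$ is simplicial and star subdivisions preserve simpliciality, $(X', D')$ is simplicial as well. I would fix a linear order $\preceq$ on $\rP(X, D)_0$ and extend it to $\rP(X', D')_0 = \rP(X, D)_0 \sqcup \{v\}$ by taking $v$ to be the maximum element, making both pairs into linearly ordered simplicial vertical toroidal embeddings. Theorem~\ref{thm:cech-comparison} then identifies $\Sd(X, D; \rF) \simeq \cech(X, D; \rF)$ and $\Sd(X', D'; \rF) \simeq \cech(X', D'; \rF)$, so it suffices to show that the composite $\cech(X', D'; \rF) \simeq \Sd(X', D'; \rF) \xrightarrow{\Sd(\mu_*)} \Sd(X, D; \rF) \simeq \cech(X, D; \rF)$ is a homotopy equivalence.

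Next I would exploit the standard combinatorics of a star subdivision of a simplicial cone complex: every stratum of $(X', D')$ is either (i) an old stratum $\sigma$ of $(X, D)$ with $\vertex(\sigma) \not\supseteq \vertex(\sigma_0)$, on which $\mu$ is a local isomorphism; or (ii) a new stratum with vertex set $\{v\} \cup T \cup S$, where $T \subsetneq \vertex(\sigma_0)$ and $\vertex(\sigma_0) \cup S$ is a simplex of $\rP(X, D)$. By Lemma~\ref{lem:fibers-of-toroidal-resolution}, any type-(ii) stratum maps birationally onto the stratum with vertex set $\vertex(\sigma_0) \cup S$, and so $\rF$ carries this morphism to an isomorphism. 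I would filter each \v{C}ech complex accordingly---on the $(X', D')$ side by whether the indexing tuple contains $v$, and on the $(X, D)$ side by whether it contains all of $\vertex(\sigma_0)$. The ``outer'' parts of the two filtrations record the strata unaffected by the subdivision, and a direct check shows that $\Sd(\mu_*)$ induces an isomorphism between them. By a long-exact-sequence argument it is enough to compare the ``inner'' parts.

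The inner pieces are local: each depends only on the star subdivision of the single simplicial cone $\sigma_0$ together with the link data. I would construct explicit chain contractions of both sides---on the $(X', D')$ side using the cone structure with apex $v$, and on the $(X, D)$ side using (for instance) the minimum vertex of $\vertex(\sigma_0)$ under $\preceq$. In spirit this comes down to the standard simplicial fact that the reduced chain complex of a simplex is contractible, now implemented with coefficients in the $\rF$-valued system of Section~\ref{sec:chain-complexes}.

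The main obstacle, where I expect most of the work to lie, is assembling these two local contractions into a single global chain homotopy with consistent signs, compatible with the face and degeneracy maps used in the proof of Theorem~\ref{thm:cech-comparison}, so that the resulting map actually realizes $\Sd(\mu_*)$. A secondary subtlety is the degree shift introduced by $\mu_*$ sending the codimension-one stratum $v$ to a stratum $\sigma_0$ of arbitrary codimension; this shift must be absorbed into the explicit homotopy rather than accommodated by a simple degreewise matching.
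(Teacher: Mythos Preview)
The corollary in the paper is deduced in one line from Theorem~\ref{thm:star-subdivision-at-a-stratum}: for an snc pair, the blowup at a stratum is precisely the standard star subdivision at that stratum, so the general result applies. Your proposal is therefore really an attempt to (re)prove Theorem~\ref{thm:star-subdivision-at-a-stratum}, and it should be compared with the paper's proof of that theorem rather than of the corollary.

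Your filtration idea---separating the strata unaffected by the subdivision from those that are---is sound, and the claim that $\cech(\mu_*)$ restricts to an isomorphism on the ``outer'' subcomplexes is correct. However, the assertion that both ``inner'' quotients are \emph{contractible} is false. Take the simplest case $D = D_1 + D_2$ with $\sigma_0 = D_{12}$: on the $(X, D)$ side the inner quotient is the single term $\rF(D_{12})$ in degree $1$, which is not contractible; on the $(X', D')$ side the inner quotient is $\rF(D'_{1v}) \oplus \rF(D'_{2v}) \to \rF(D'_v)$, which is homotopy equivalent to $\rF(D_{12})[1]$, again not contractible. The simplicial fact you invoke (a cone or simplex has acyclic reduced chains) uses constant coefficients; here the coefficients $\rF(D_\sigma)$ genuinely vary with $\sigma$, and neither ``cone off $v$'' nor ``cone off the minimum vertex of $\vertex(\sigma_0)$'' gives a null-homotopy. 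What \emph{is} true is that the induced map between the two inner quotients is a homotopy equivalence, but establishing this is essentially the whole content of the theorem and is not easier than the original statement. (The ``degree shift'' you flag as a secondary subtlety is exactly the reason these quotients are shifted copies of something, rather than contractible.)

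The paper's approach to Theorem~\ref{thm:star-subdivision-at-a-stratum} is quite different from a filtration argument: it writes down an explicit candidate inverse $\gamma$ for $\cech(\mu_*)$, built from the ``section'' strata $\sec(\tau,k)$ of the exceptional divisor, checks $\cech(\mu_*) \circ \gamma = \id$ on the nose, and then constructs an explicit homotopy $h$ (via the operation $\tau' \mapsto \tau' \wedge b$) between $\gamma \circ \cech(\mu_*)$ and the identity. As a minor aside, your claim that every type-(ii) stratum maps \emph{birationally} onto a stratum of $(X,D)$ is not quite right---Lemma~\ref{lem:fibers-of-toroidal-resolution} only gives a birational projective-bundle structure---though this is still enough for $\rF$ to produce an isomorphism.
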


For our final application, we observe that any toroidal embedding has a toroidal resolution, which is a combination of the barycentric subdivision and a sequence of star subdivisions. 
Hence:

\begin{corollary}    
\label{cor:toroidal-resolution}
    Let $(X, D)$ be a vertical toroidal embedding over $R$ or $k$. 
    Then there is a toroidal resolution
    \[
        f\col(X', D') \to (X, D)
    \]
    such that $\Sd(f_*):\Sd(X', D'; \rF) \to \Sd(X, D; \rF)$ is a homotopy equivalence.
\end{corollary}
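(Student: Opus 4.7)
The plan is to build the desired resolution in two stages, reducing to the simplicial case first, then to the snc case.

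\textbf{Stage 1: Barycentric subdivision.} First I would take the barycentric subdivision $g:(X^0, D^0) \to (X, D)$ as constructed in \S\ref{sec:toroidal-embeddings}. By Lemma~\ref{lem:barycentric-subdivision}, $(X^0, D^0)$ is simplicial, and by Proposition~\ref{prop:barycentric-subdivision}, the induced morphism
\[
\Sd(g_*): \Sd(X^0, D^0; \rF) \to \Sd(X, D; \rF)
\]
is a homotopy equivalence.

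\textbf{Stage 2: Iterated star subdivisions.} Starting from the simplicial toroidal embedding $(X^0, D^0)$, I would invoke the toroidal resolution procedure from Theorem~\ref{thm:toroidal-resolution-of-singularities}, which (following the classical toric construction of \cite{fulton}) is realized as a finite sequence of star subdivisions
\[
(X^0, D^0) \longleftarrow (X^1, D^1) \longleftarrow \cdots \longleftarrow (X^N, D^N) = (X', D'),
\]
each one a star subdivision at a primitive integral vector in the interior of some cone (equivalently, at a stratum of the preceding toroidal embedding). The key combinatorial fact that makes this stage work is that star subdivision at an interior ray preserves simpliciality: if a simplicial cone $\sigma$ is subdivided at an interior primitive vector $v$, each new cone is of the form $v + \tau$ for a proper face $\tau$ of $\sigma$, hence remains simplicial. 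Therefore each intermediate $(X^i, D^i)$ is simplicial, so Theorem~\ref{thm:star-subdivision-at-a-stratum} applies at every step and yields that each morphism $\Sd((X^{i+1}, D^{i+1}) \to (X^i, D^i))$ is a homotopy equivalence of subdivision complexes.

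\textbf{Stage 3: Compose and conclude.} The composite $f:(X', D') \to (X, D)$ is by construction a toroidal modification whose target $(X', D')$ is an snc pair, i.e., a toroidal resolution. By the functoriality of $\Sd$ recorded in Lemma~\ref{lem:functoriality-of-sd}, the induced map $\Sd(f_*)$ is the composition of the induced maps at each stage, all of which are homotopy equivalences by Stages 1 and 2. Hence $\Sd(f_*)$ is a homotopy equivalence, proving the corollary.

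\textbf{Main obstacle.} The technical content is entirely absorbed into the two inputs (Proposition~\ref{prop:barycentric-subdivision} and Theorem~\ref{thm:star-subdivision-at-a-stratum}); the only thing left to verify is that the standard toric resolution algorithm can be carried out through star subdivisions of strata while maintaining simpliciality throughout. This is the mild combinatorial check above, and it is the reason the barycentric subdivision is done first: without it, Theorem~\ref{thm:star-subdivision-at-a-stratum} cannot be applied at the first step because its hypothesis requires a simplicial input.
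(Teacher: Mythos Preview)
Your proposal is correct and follows exactly the paper's approach: first apply the barycentric subdivision (Proposition~\ref{prop:barycentric-subdivision}) to reduce to the simplicial case, then resolve by a finite sequence of star subdivisions, invoking Theorem~\ref{thm:star-subdivision-at-a-stratum} at each step. The paper's proof is two sentences and leaves implicit both the functoriality of $\Sd$ under composition and the preservation of simpliciality under star subdivision, which you spell out.
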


\begin{proof}
    By Proposition~\ref{prop:barycentric-subdivision}, we may replace $(X, D)$ with its barycentric subdivision; in particular, we may suppose that $(X, D)$ is simplicial.
    Any simplicial toroidal embedding has a resolution by star subdivisions.
\end{proof}

\subsection{A morphism between \v{C}ech complexes}
Throughout, we write $\sigma_0 \in \rP(X, D)$ for the stratum whose associated cone, $|\sigma_0|$, contains the primitive vector where we are starring.

\begin{convention}[Ordering of divisors]
	We choose an ordering 
\[
	D = D_0 + \cdots + D_a + D_{a + 1} + \cdots + D_b
\]
of the irreducible components of $D$ so that $\{D_{a + 1}, \dots, D_{b}\}$ are the divisors containing $\sigma_0$.
We order the irreducible components of $D'$ as
\[
	D' = D'_0 + \cdots + D'_a + D'_{a + 1} + \cdots + D'_b + D'_{b + 1}, 
\]
where $D'_i$ is the strict transform of $D_i$ for $0 \leq i \leq b$, and $D'_{b + 1}$ is the exceptional divisor, i.e., the divisor corresponding to the ray through $v \in |\sigma_0|$ in the star subdivision.
\end{convention}

\begin{definition}
\label{def:cech-map}
    We define a map between \v{C}ech complexes (with respect to the orderings defined above)
    \[
        \cech(f_*) : \cech(X', D'; \rF) \to \cech(X, D; \rF)
    \]
    as the composition
    \[
        \begin{tikzcd}[column sep=large]
            \cech(X', D'; \rF) \ar[r, "\sd"] & \Sd(X', D'; \rF) \ar[d, "\Sd(\mu_*)"] \\
                & \Sd(X, D; \rF) \ar[r, "\lambda"] & \cech(X, D; \rF).
        \end{tikzcd}
    \]
    By Theorem~\ref{thm:cech-comparison}, $\Sd(f_*)$ is a homotopy equivalence if and only if $\cech(f_*)$ is a homotopy equivalence.
\end{definition}

\begin{notation}[Bar notation]
\label{notation:bar}
    Given $\tau = [i_0, \dots, i_n]_{\tau} \in \rP(X, D)$ in Notation~\ref{notation:coordinates}, we shall use bar notation
	\[
	 	\tau = [i_0, \dots \mid i_{\ell + 1}, \dots, i_n]_{\tau}
	 \] 
	to indicate that $\{D_{i_0}, \dots, D_{i_{\ell}}\} \subset \{D_0, \dots, D_a\}$ and $\{D_{\ell + 1}, \dots, D_{n}\} \subset \{D_{a + 1}, \dots, D_b\}$.
	Of course, it may happen that none of the $D_0,\dots, D_a$ contain $D_{\tau}$, which corresponds to the case $\ell = -1$.

	We also use the same notation to specify faces of $\tau$. 
	For example, 
	\[
		d_j \tau = \begin{cases}
			[i_0, \dots, \hat{i}_j, \dots \mid i_{\ell + 1}, \dots, i_n]_{\tau} & 0 \leq j \leq \ell \\
			[i_0, \dots \mid i_{\ell + 1}, \dots \hat{i}_j, \dots, i_n]_{\tau} & \ell + 1 \leq j \leq n .
		\end{cases}
	\]
\end{notation}

\begin{notation}[Double bar notation]
\label{notation:double-bar}
	Given $\tau' \in \rP(X', D')$, we shall use the double bar notation, which depends on whether $D'_{\tau'}$ is contained in the exceptional divisor or not.
    \begin{enumerate} 
        \item  If $D'_{\tau'}$ is contained in the exceptional divisor $D'_{b + 1}$, we write 
    \[
        \tau' = [i_0, \dots \mid i_{\ell + 1}, \dots, i_{n - 1} \mid b + 1]_{\tau'},  
    \]
    where $\{D'_{i_0}, \dots, D'_{i_{\ell}}\} \subset \{D'_0, \dots, D'_a\}$, $\{D'_{\ell + 1}, \dots, D'_{n - 1}\} \subset \{D'_{a + 1}, \dots, D'_b\}$.
    The image of $\tau'$ under $\mu_*$ is given by
    \[
            \mu_* \tau' = [i_0, \dots, i_n \mid a + 1, \dots, b]_{\mu_* \tau'}.
        \]
        \item If $D'_{\tau'}$ is not contained in the exceptional divisor, we write
    \[ 
        \tau' = [i_0, \dots \mid i_{\ell + 1}, \dots, i_n \mid -]_{\tau'} .
    \]
    where $\{D'_{i_0}, \dots, D'_{i_{\ell}}\} \subset \{D'_0, \dots, D'_a\}$ and $\{D'_{\ell + 1}, \dots, D'_{n}\} \subset \{D'_{a + 1}, \dots, D'_b\}$.
    The image of $\tau'$ under $\mu_*$ is given by 
    \[
            \mu_* \tau' = [i_0, \dots \mid i_{\ell + 1}, \dots, i_n]_{\mu_* \tau'}.
        \]
    \end{enumerate}
\end{notation}

\begin{lemma}
\label{lem:compute-cech-map}
Let $\tau' = [i_0, \dots, i_n]$ be a stratum of $(X', D')$. 
\begin{enumerate} 
	\item \label{item:not-in-exceptional}
    If $D_{\tau'} \not \subset D_{b + 1}$, then the restriction of $\cech(\mu_*)$ to $\rF(\tau')$ is given by the vertical morphism
	\[
		 \rF(\tau') \to \rF(\mu_* \tau').
	\]
	\item \label{item:in-exceptional}
    If $D_{\tau'} \subset D_{b + 1}$ and $D_{\tau'} \not \subset D'_b$, then, writing $\tau' = [i_0, \dots \mid i_{\ell + 1}, \dots i_{n - 1} \mid b + 1]_{\tau'}$, $\cech(\mu_*)$ is given on $\rF(\tau')$ as the natural map
	\[
	  	\rF(\tau') \to \rF([i_0, \dots \mid i_{\ell + 1}, \dots, i_{n - 1}, b]_{\mu_* \tau'}),
	  \]  
	which is the composition of $\rF(\tau') \to \rF(\mu_* \tau')$, with the inclusion from $\mu_* \tau'$ to the stratum 
	$[i_0, \dots \mid i_{\ell + 1}, \dots, i_{n - 1}, b]_{\mu_* \tau'}$ of $(X, D)$.
	\item \label{item:weird-case}
    Otherwise (i.e., if $D_{\tau'}$ is contained in both $D'_b$ and $D'_{b + 1}$), the restriction of $\cech(\mu_*)$ to $\rF(\tau')$ is $0$.
\end{enumerate}
\end{lemma}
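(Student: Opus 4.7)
The plan is to unwind the defining composition of $\cech(\mu_*)$ from Definition~\ref{def:cech-map}, namely $\cech(\mu_*) = \pi \circ \lambda^+ \circ \Sd(\mu_*) \circ \sd^+ \circ \iota$. Fix a stratum $\tau' \in \rP_n(X', D')$ with ordered vertices $p_0 \prec \cdots \prec p_n$. On the summand $\rF(\tau')$, the subdivision map produces $\sum_{g \in \fS_{n+1}} (-1)^g [\rF(\tau') \to \rF(\sd(\tau', g))]$, where $\sd(\tau', g)$ is the chain whose $k$th term is $\sigma'_k = [p_{g(0)}, \ldots, p_{g(k)}]^\circ_{\tau'}$. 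Applying $\Sd(\mu_*)$ and then $\lambda^+$ replaces this chain by the extended stratum $[m_0(g), \ldots, m_n(g)]_{\mu_*\tau'}$, where $m_k(g) := \max \vertex(\mu_*\sigma'_k)$, and the induced morphism on the top summand factors as the vertical morphism $\rF(\tau') \to \rF(\mu_*\tau')$ of Remark~\ref{rem:vertical-morphisms} post-composed with a $\rho$-map from Proposition~\ref{prop:existence}. The problem is therefore reduced to identifying the permutations $g$ for which the sequence $m_\bullet(g)$ is strictly increasing, since the remaining terms are killed by $\pi$.

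The key combinatorial observation is that $\mu$ is an isomorphism away from the exceptional divisor $D'_{b+1}$, and $\mu(D'_{b+1})$ surjects onto $D_{\sigma_0} = D_{a+1} \cap \cdots \cap D_b$. Hence any stratum $\sigma'$ of $(X', D')$ satisfies
\[
\vertex(\mu_*\sigma') = \begin{cases} \vertex(\sigma'), & b+1 \notin \vertex(\sigma'),\\ (\vertex(\sigma')\setminus\{b+1\}) \cup \{a+1, \ldots, b\}, & b+1 \in \vertex(\sigma'). \end{cases}
\]
In our chosen ordering, this means $m_k(g) = \max\{p_{g(0)}, \ldots, p_{g(k)}\}$ so long as $b+1 \notin \{p_{g(0)}, \ldots, p_{g(k)}\}$, and $m_k(g) = b$ from the moment $b+1$ is introduced.

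Now I carry out a case analysis based on which of $b$ and $b+1$ lie in $\vertex(\tau')$. In Case~\ref{item:not-in-exceptional} ($b+1 \notin \vertex(\tau')$), the sequence $m_\bullet$ is simply $\max\{p_{g(0)}, \ldots, p_{g(k)}\}$, so strict monotonicity forces $g = \id$; the resulting extended stratum is $\mu_*\tau'$ itself, and the total map coincides with the vertical morphism. In Case~\ref{item:in-exceptional} ($p_n = b+1$ and $p_{n-1} < b$), any $g$ with $g(n) \neq n$ introduces $b+1$ at some $k^* < n$, after which $m_{k^*} = m_{k^*+1} = b$, so only $g = \id$ contributes; the associated sequence is $p_0, \ldots, p_{n-1}, b$, the final jump being strict because $p_{n-1} < b$, and this yields exactly the extended stratum $[p_0, \ldots, p_{n-1}, b]_{\mu_*\tau'}$ together with the claimed composition of the vertical morphism with the inclusion-induced $\rho$-map. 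In Case~\ref{item:weird-case} ($p_{n-1} = b$ and $p_n = b+1$), both $g^{-1}(n-1)$ and $g^{-1}(n)$ pin the max at $b$ once they occur, and since these positions are distinct the sequence $m_\bullet(g)$ plateaus at two consecutive indices for every $g$, so every term is degenerate and $\cech(\mu_*)|_{\rF(\tau')} = 0$.

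The main obstacle is the uniform vanishing in Case~\ref{item:weird-case}, where one must verify that \emph{every} permutation forces a plateau. The argument handles both subcases $g^{-1}(n-1) < g^{-1}(n)$ and $g^{-1}(n-1) > g^{-1}(n)$ uniformly via the pigeonhole that each of the vertices $b, b+1$, once included in the running subset, forces the running max to equal $b$. Once the surviving $g$'s are pinned down, the identification of the resulting map with the explicit composition in Cases~\ref{item:not-in-exceptional} and \ref{item:in-exceptional} is immediate from Definitions~\ref{def:morphism-between-subdivision} and \ref{def:last-vertex-map}, using that the $\rho$-map between $\mu_*\tau'$ and $[p_0,\ldots,p_{n-1},b]_{\mu_*\tau'}$ is the morphism induced by the inclusion of the smaller-vertex stratum.
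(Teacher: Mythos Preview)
Your proof is correct and follows essentially the same approach as the paper's: both arguments reduce to determining for which $g \in \fS_{n+1}$ the extended stratum $\lambda^+(\mu_* \sd(\tau', g))$ is nondegenerate, via the key observation that $\mu_*$ replaces the vertex $b+1$ by the full set $\{a+1,\dots,b\}$ and hence caps the running maximum at $b$. Your write-up is more detailed than the paper's sketch (which only states the key observation in each case), but the logic is identical.
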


\begin{proof}
    This is an explicit computation, but let us indicate the key points in each of the three cases.
    In each case, let $g \in \fS_{n + 1}$; we consider the expression $\mu_*\sd(\tau', g)$ which appears in the definition of $\Sd^+(f_*)$.
    The key observation is that, in Case~\ref{item:not-in-exceptional} and Case~\ref{item:in-exceptional}, $\lambda^+(\mu_* \sd(\tau', g))$ is degenerate unless $g = \id$.
    In Case~\ref{item:weird-case}, $\lambda^+(\mu_* \sd(\tau', g))$ is always degenerate, because $D_{\sigma_0}$ (which is the image of $D'_{b + 1}$) and $D_b$ have the same maximal vertex.
\end{proof}

\subsection{The homotopy inverse}

If $\tau \in \Star(\sigma_0)$, then we may write
\[
	\tau = [i_0, \dots, i_\ell \mid a + 1, \dots, b]_{\tau},
\]
since any divisor containing $D_{\sigma_0}$ also contains $D_{\tau}$. 
The preimage $\mu^{-1}(D_{\tau})$ is a birationally a projective bundle over $D_\tau$, but the following lemma produces a natural set of sections.

\begin{lemma}
\label{lem:section-lemma}
    For $\tau = [i_0, \dots, i_\ell \mid a + 1, \dots, b]_{\tau}$ be an element of $\Star(\sigma_0)$. For each $a + 1 \leq k \leq b$, 
    there is a unique connected component, denoted $\sec(\tau, k)$, of 
    \[
    	[i_0, \dots, i_\ell \mid a + 1, \dots, \hat{k}, \dots, b \mid b + 1]
    \]
    whose image contains $D_{\tau}$.
    The induced map $D'_{\sec(\tau, k)} \to D_{\tau}$ is birational. 
\end{lemma}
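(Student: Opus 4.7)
The plan is to identify $\sec(\tau, k)$ via the corresponding cone in the subdivided cone complex. Let $\Sigma$ and $\Sigma'$ be the cone complexes of $(X, D)$ and $(X', D')$, with $v \in |\sigma_0|^\circ$ the primitive integral vector where the star subdivision is performed and $D'_{b+1}$ the corresponding exceptional divisor. Since $(X, D)$ is simplicial, each cone of $\Sigma$ is determined by its rays: for $\tau = [i_0, \dots, i_\ell \mid a+1, \dots, b]_\tau$ in $\Star(\sigma_0)$, $|\tau|$ has rays $\{D_{i_0}, \dots, D_{i_\ell}, D_{a+1}, \dots, D_b\}$, and $|\sigma_0|$ is its face with rays $\{D_{a+1}, \dots, D_b\}$. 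For each $a+1 \leq k \leq b$, let $\tau'_k$ be the face of $|\tau|$ with rays $\{D_{i_0}, \dots, D_{i_\ell}, D_{a+1}, \dots, \widehat{D_k}, \dots, D_b\}$; since $\tau'_k$ misses the ray $D_k$ of $|\sigma_0|$, $\tau'_k \in \Link(\sigma_0)$, so $\join_{|\tau|}(v, \tau'_k)$ is a cone of $\Sigma'$. I define $\sec(\tau, k)$ to be the corresponding stratum of $(X', D')$.

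The rays of $\join_{|\tau|}(v, \tau'_k)$ are $\{D'_{b+1}, D'_{i_0}, \dots, D'_{i_\ell}, D'_{a+1}, \dots, \widehat{D'_k}, \dots, D'_b\}$, so $D'_{\sec(\tau, k)}$ is a connected component of the stated intersection. To verify that its image contains $D_\tau$, recall that $\mu_* \sec(\tau, k)$ is the smallest cone of $\Sigma$ containing $\join_{|\tau|}(v, \tau'_k)$ as a subset of the underlying space; since $v \in |\sigma_0|^\circ$, this is the smallest cone containing $|\sigma_0|$ and $\tau'_k$, which by simpliciality is $|\tau|$. Hence $\mu(D'_{\sec(\tau, k)}) = D_\tau$. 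For uniqueness, any connected component $D'_{\sigma'}$ of the intersection has $\vertex(\sigma')$ equal to (not merely containing) the listed divisors by maximality, so $|\sigma'| = \join(v, \tau''')$ for some cone $\tau'''$ of $\Sigma$ sharing its rays with $\tau'_k$. The condition $\mu(D'_{\sigma'}) \supset D_\tau$ forces $|\sigma'| \subset |\tau|$, hence $\tau''' \subset |\tau|$, making $\tau'''$ a face of $|\tau|$ with the given rays; by simpliciality, $\tau''' = \tau'_k$ and $\sigma' = \sec(\tau, k)$.

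Finally, both $D_\tau$ and $D'_{\sec(\tau, k)}$ have codimension $\ell + b - a + 1$ in $X$ and $X'$ respectively, so they have the same dimension. By Lemma~\ref{lem:fibers-of-toroidal-resolution}(\ref{item:proj-bundle-stratum}), the map $D'_{\sec(\tau, k)} \to D_\tau$ induces a purely transcendental extension of function fields; since the transcendence degrees agree, the extension is trivial and the map is birational. The main obstacle is combinatorial bookkeeping: matching the bar notation for strata with the star-subdivision description of $\Sigma'$ and confirming that no other cone of $\Sigma'$ has the same set of rays as $\join_{|\tau|}(v, \tau'_k)$, both of which are handled by the simplicial hypothesis.
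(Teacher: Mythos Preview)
Your proof is correct and follows essentially the same approach as the paper: identify $\sec(\tau,k)$ combinatorially as the cone $\join_{|\tau|}(v,\tau'_k)$ in the star subdivision, where $\tau'_k = [i_0,\dots,i_\ell \mid a+1,\dots,\hat k,\dots,b]_\tau \in \Link(\sigma_0)$, and deduce birationality from the equality of dimensions together with Lemma~\ref{lem:fibers-of-toroidal-resolution}. The paper's own proof is extremely terse and leaves both the verification that $\mu_*\sec(\tau,k)=\tau$ and the uniqueness of the component implicit; you have simply filled in those details, using the simplicial hypothesis in exactly the way it is needed.
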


\begin{definition}
\label{def:section-stratum}
    We write 
    \[
    	\sec(\tau, k) = [i_0, \dots, i_\ell \mid a + 1, \dots, \hat{k}, \dots, b \mid b + 1]_{\tau} .
    \]
    for the connected component described by Lemma~\ref{lem:section-lemma}.
    The subscript $[\dots]_\tau$ indicates that the connected component is determined by the property that it maps into $D_{\tau}$.
\end{definition}

\begin{proof}[Proof of Lemma~\ref{lem:section-lemma}]
    Let $v \in |\sigma_0|$ be the primitive integral vector at which we are starring. 
    In combinatorial terms, the strata $\sec(\tau, k)$ correspond to the join of $v$ in $|\tau|$ with the elements 
    \[
    	[i_0, \dots, i_\ell \mid a + 1, \dots, \hat{k}, \dots, b]_{\tau}
    \]
    of $\Link(\sigma_0)$.
    The fact that $D'_{\sec{\tau, k}} \to D_{\tau}$ is birational follows from the fact that they have the same dimension, and Lemma~\ref{lem:fibers-of-toroidal-resolution}.
\end{proof}

\begin{definition}
\label{def:section-map}
\begin{enumerate} 
	\item If $\tau \notin \Star(\sigma_0)$, then we write $\tau'$ for the strict transform of $\tau$, and we define
	\[
		\gamma_{\tau}:\rF(\tau) \to \rF(\tau')
	\]
	as the inverse of the isomorphism $\rF(\tau') \to \rF(\tau)$ induced by applying $\rF$ to the birational morphism $D'_{\tau'} \to D_{\tau}$.
	\item If $\tau \in \Star(\sigma_0)$, then for each $a + 1 \leq k \leq b$, we define
    \[
    	\gamma_{\tau,k}:\rF(\tau) \to \rF(\sec(\tau, k))
    \]
    to be the inverse of the isomorphism $\rF(\sec(\tau, k)) \to \rF(\tau)$ induced by applying $\rF$ to the birational morphism $D'_{\sec{\tau, k}} \to D_{\tau}$.
\end{enumerate}
 
\end{definition}

\begin{lemma}
\label{lemma:inverse-map-is-complex-map}
There is a map of chain complexes $\gamma\col \cech(X,D) \ra \cech(X',D')$ determined by the following condition: For a stratum $\tau \in \rP(X, D)$ of codimension $n + 1$, the restriction of $\gamma$ to the summand $\rF(\tau)$ of $\cech(X, D)$ is given by
\[
    \gamma =
        \begin{cases}
            \gamma_{\tau} & \tau \notin \Star(\sigma_0) \\
            \displaystyle{\sum_{k=\ell+1}^n (-1)^{n+k}} \gamma_{\tau,k} & \tau \in \Star(\sigma_0).
        \end{cases}
\]
\end{lemma}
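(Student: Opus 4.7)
The plan is to check the chain-map identity $d_{X',D'} \circ \gamma = \gamma \circ d_{X,D}$ on each summand $\rF(\tau)$ of $\cech(X,D;\rF)$. The verification splits naturally according to whether $\tau$ belongs to $\Star(\sigma_0)$.

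In the first case, $\tau \notin \Star(\sigma_0)$: since $D_{d_j\tau} \supset D_\tau$ and $D_\tau \not\subset D_{\sigma_0}$, every face $d_j\tau$ also lies outside $\Star(\sigma_0)$. Thus $\gamma$ is given by strict-transform isomorphisms ($\gamma_\tau$ on $\rF(\tau)$, $\gamma_{d_j\tau}$ on $\rF(d_j\tau)$). Writing $\tau'$ for the strict transform of $\tau$, the face $d_j\tau'$ is the strict transform of $d_j\tau$, i.e., $\mu_*(d_j\tau') = d_j\tau$ and $\mu_*(\tau') = \tau$. The chain-map identity is then exactly Proposition~\ref{prop:existence}, Item~\ref{item:vert-functoriality}, applied to $\mu\colon (X',D') \to (X,D)$ along the chain $\tau' \leq d_j\tau'$, after inverting the (isomorphism) vertical maps $\rF(\mu|_{D'_{\tau'}})$ and $\rF(\mu|_{D'_{d_j\tau'}})$ to obtain $\gamma_\tau$ and $\gamma_{d_j\tau}$.

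The main case is $\tau \in \Star(\sigma_0)$. Writing $\tau = [i_0, \dots, i_\ell \mid a+1, \dots, b]_\tau$ (so $n = \ell + b - a$), the faces split: $d_j\tau$ for $0 \leq j \leq \ell$ remain in $\Star(\sigma_0)$ (the left block shrinks by one), while $d_j\tau$ for $\ell+1 \leq j \leq n$ lie in $\Link(\sigma_0) \setminus \Star(\sigma_0)$. Expanding both sides in bar notation, three groups of terms must be reconciled:
\begin{enumerate}
\item[(i)] For $j \leq \ell$ and $k \in \{\ell+1,\dots,n\}$, one has the identification $d_j\sec(\tau,k) = \sec(d_j\tau,\, k-1)$ (the left block contracts, shifting the middle block down by one); the corresponding signs $(-1)^{n+k}(-1)^j$ from $d \circ \gamma$ and $(-1)^j(-1)^{(n-1)+(k-1)}$ from $\gamma \circ d$ agree, giving a term-by-term match.
\item[(ii)] For $j = n$, the face $d_n \sec(\tau,k)$ (removing the exceptional-divisor slot) is the strict transform of $d_k\tau \in \Link(\sigma_0)$, which matches the $\gamma_{d_k\tau}$ contribution in $\gamma \circ d$; both carry sign $(-1)^k$.
\item[(iii)] For $\ell+1 \leq j \leq n-1$, the faces $d_j\sec(\tau,k)$ yield ``extra'' strata not visible in $\gamma \circ d$. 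These cancel in pairs under the involution swapping the two removed middle-block vertices: for $\ell+1 \leq k < k'' \leq n$, the terms $d_{k''-1}\circ\gamma_{\tau,k}$ and $d_k\circ\gamma_{\tau,k''}$ have the same target but cancelling signs $(-1)^{n+k}(-1)^{k''-1} + (-1)^{n+k''}(-1)^k = 0$.
\end{enumerate}

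The main obstacle is the bookkeeping in group (iii): recording the position-shift rule precisely (removing the $k$-th vertex from $\sec(\tau,k'')$ versus the $(k''-1)$-st vertex from $\sec(\tau,k)$ lands on the same stratum, because the middle-block positions shift by one when the smaller-indexed vertex is removed first) and tracking the resulting $-1$ in the sign. Once the coordinate rules for $d_j \sec(\tau,k)$ in bar notation are established case-by-case on the range of $j$, all three matches and cancellations reduce to direct calculation.
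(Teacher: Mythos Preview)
Your proposal is correct and follows essentially the same three-way term-matching as the paper's proof (your groups (i), (ii), (iii) are exactly the paper's three bullet points, with the same sign checks). The only point to make more explicit is that in each group, matching codomains and signs is not by itself enough: one must also verify that the paired maps are \emph{equal} (e.g., $d_{k''-1}\circ\gamma_{\tau,k} = d_k\circ\gamma_{\tau,k''}$ in (iii), and similarly in (i) and (ii)), which the paper carries out via commutative squares and Proposition~\ref{prop:existence}; your closing remark that everything ``reduces to direct calculation'' suggests you have this in mind, but it should be stated rather than left implicit.
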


\begin{proof}
    For each stratum $\tau$ in $(X,D)$, we show that $d\circ \gamma = \gamma\circ d$ on $\rF(\tau)$. 
    We first treat the simpler case when $\tau \notin \Star(\sigma_0)$.
    If $\tau = [i_0, \dots \mid i_{\ell + 1}, \dots i_n]_{\tau}$, then the strict transform $\tau'$ is given in double bar notation
    \[
    	\tau' = [i_0, \dots \mid i_{\ell + 1}, \dots, i_n \mid -]_{\tau}.
    \]
    As before, the subscript $[\dots]_{\tau}$ refers to the unique connected component of the given intersection of divisors whose image in $X$ contains $D_\tau$.
    It is clear that taking the strict transform commutes with the face maps $d_j$, and it follows easily that $\gamma$ commutes with $d$.

    Now, suppose that $\tau \in \Star(\sigma_0)$.
    We write
    \[
    	\tau = [i_0, \dots \mid i_{\ell + 1}, \dots, i_n]_{\tau}.
    \]
    By assumption, $\{i_{\ell + 1}, \dots, i_n\} = \{a + 1, \dots, b\}$, but we retain the former notation because it determines the signs in the differential. 
    On $\rF(\tau)$, we have
    \[
        d\circ \gamma = \sum\limits_{k=\ell+1}^n \sum\limits_{j=0}^n (-1)^{n+k+j}d_j\circ \gamma_{\tau,k}.
    \]
    Similarly, on $\rF(\tau)$, we have
    \begin{align*}
        \gamma\circ d|_{F(\tau)} &= \sum\limits_{j=0}^n (-1)^j\gamma\circ d_j \\
        &=\left(\sum\limits_{k=\ell}^{n-1}\sum\limits_{j=0}^\ell  (-1)^{n+k+j-1} \gamma_{d_j\tau,k}\circ d_j \right) + \left(\sum\limits_{j=\ell+1}^n  (-1)^{j} \gamma_{d_j\tau}\circ d_j \right).
    \end{align*}
    The first set of terms in the second line has the faces of $\tau$ which remain in $\Star(\sigma_0)$, while the second set of terms has the faces which are contained in $\Link(\sigma_0)$.

	We refer to Table~\ref{table:gamma-circ-diff} and Table~\ref{table:diff-circ-gamma} for the computation of the terms of $\gamma \circ d$ and $d \circ \gamma$ on $\rF(\tau)$.
	We match terms:
    \begin{itemize}
    	\item For $\ell+1\le p\le n$, there are two maps with codomain
    \[
        [i_0, \dots \mid i_{\ell + 1}, \dots, \hat{i}_p, \dots \mid - ]_{\tau}.
    \]
    	The first is $\gamma_{d_p \tau} \circ d_p$, and the second is $d_n \circ \gamma_{\tau, p}$. Both appear with sign $(-1)^p$.
    	\item For $0\le p \le \ell$ and $\ell+1\le q \le n$ there are two maps with the codomain
    \[
        [i_0, \dots, \hat{i}_p, \dots \mid i_{\ell + 1}, \dots, \hat{i}_q, \dots \mid b + 1]_{\tau} .
    \]
    	The first is $\gamma_{d_p \tau, q - 1} \circ d_p$, and the second is $d_p \circ \gamma_{\tau, q}$. 
    	Both appear with sign $(-1)^{n + p + q}$.
    	\item For $\ell+1\le p< q\le m$, there are two maps with codomain
    \[
        [i_0, \dots \mid i_{\ell + 1}, \dots, \hat{i}_p, \dots, \hat{i}_q, \dots \mid b + 1]_{\tau} .
    \]
    	In this case, both maps come from $d \circ \gamma$, and are $d_p \circ \gamma_{\tau, q}$, and $d_{q - 1} \circ \gamma_{\tau, p}$.
    	They appear with opposite signs.
    \end{itemize}

    \begin{table}[ht]
    \caption{The terms of $\gamma \circ d$ on $\rF(\tau)$}
    \label{table:gamma-circ-diff}
    	$\begin{array}{c|c|c}
                    \text{Map} & \text{Codomain} & \text{Sign}\\
                    \hline
                    \begin{array}{c}
                        \gamma_{d_j\tau,k}\circ d_j \\
                        0\le j\le \ell\le k \le n-1
                    \end{array}
                    & [i_0, \dots, \hat{i}_j, \dots \mid i_{\ell + 1}, \dots, \hat{i}_{k + 1}, \dots \mid b + 1]_{\tau}
                    & (-1)^{n+k+j-1}\\
                    \hline
                    \begin{array}{c}
                        \gamma_{d_j\tau}\circ d_j \\
                        \ell+1 \le j \le n
                    \end{array}
                    & [i_0, \dots \mid i_{\ell + 1}, \dots, \hat{i}_j, \dots \mid -]_{\tau} & (-1)^{j}\\
                \end{array}$
    \end{table}
        \begin{table}[ht]
        \caption{The terms of $d \circ \gamma$ on $\rF(\tau)$}
        \label{table:diff-circ-gamma}
        $\begin{array}{c|c|c}
            \text{Map} & \text{Codomain} & \text{Sign}\\
            \hline
            \begin{array}{c}
                d_j\circ \gamma_{\tau,k}\\
                \ell+1\le k \le n, 0\le j\le \ell
            \end{array} 
            & [i_0, \dots, \hat{i}_j, \dots \mid i_{\ell + 1}, \dots, \hat{i}_{k}, \dots \mid b + 1]_{\tau}
            & (-1)^{n+k+j}\\

            \hline

            \begin{array}{c}
                d_j\circ \gamma_{\tau,k}\\
                \ell+1\le k \le n, \ell+1\le j < k
            \end{array} 
            & [i_0, \dots \mid i_{\ell + 1}, \dots, \hat{i}_j, \dots, \hat{i}_{k}, \dots \mid b + 1]_{\tau}
            & (-1)^{n+k+j}\\

            \hline
            
            \begin{array}{c}
                d_j\circ \gamma_{\tau,k}\\ 
                \ell+1\le k \le n, k\le j \le n-1
            \end{array}
            & [i_0, \dots \mid i_{\ell + 1}, \dots, \hat{i}_{k}, \dots, \hat{i}_{j + 1}, \dots \mid b + 1]_{\tau}
            & (-1)^{n+k+j}\\
            \hline

            \begin{array}{c}
                d_j\circ \gamma_{\tau,k}\\
                \ell+1\le k \le n, j =n
            \end{array}
            & [i_0, \dots \mid i_{\ell + 1}, \dots, \hat{i}_{k}, \dots \mid -]_{\tau}
            & (-1)^{2n+k}
        \end{array}$
        \end{table}

    The terms listed above account for all of the terms of $d \circ \gamma$ and $\gamma \circ d$ on $\rF(\tau)$, and we will be done as long as we can show that the matched maps are equal to each other.
    In other words, we need to show the identities
    \begin{align*}
    	\gamma_{d_p \tau} \circ d_p &= d_n \circ \gamma_{\tau, p} \\ 
    	\gamma_{d_p \tau, q - 1} \circ d_p &= d_p \circ \gamma_{\tau, q} \\
    	d_p \circ \gamma_{\tau, q} &= d_{q - 1} \circ \gamma_{\tau, p}
    \end{align*}
    in the ranges for $p,q$ indicated above.

    To prove the first one, observe that there is a commutative diagram
    \[
    	\begin{tikzcd}
    		D'_{\sec(\tau, p)} \ar[d, "f"] \ar[r, "d_n", hook] & D'_{d_n\sec(\tau, p)} = D'_{d_p{\tau}} \ar[d, "g"] \\
    		D_{\tau} \ar[r, "d_p", hook] & D_{d_p \tau},
    	\end{tikzcd}
    \]
    where (abusing notation) $D'_{d_p \tau}$ is the strict transform of $d_p \tau$, and $f, g$ are the restrictions of $\mu$.
    Then Proposition~\ref{prop:existence} implies that
    \[
    	\rF(d_p) \circ \rF(f) = \rF(g) \circ \rF(d_n).
    \]
    The identity follows by rearranging terms, since $\gamma_{d_p \tau} = \rF(g)^{-1}$ and $\gamma_{\tau, p} = \rF(f)^{-1}$.
    A similar argument handles the other two identities. 
\end{proof}

\subsection{The easy direction of Theorem~\ref{thm:star-subdivision-at-a-stratum}}

As we mentioned above, to prove Theorem~\ref{thm:star-subdivision-at-a-stratum}, we just need to show that $\cech(\mu_*)$ is a homotopy equivalence because of Theorem~\ref{thm:cech-comparison}.
In fact, we will show that $\gamma$ is a homotopy inverse of $\cech(\mu_*)$, and in this section, we prove the elementary result that $\cech(\mu_*) \circ \gamma$ is equal (not just homotopic) to the identity. 
First, we compute the composition:
\begin{itemize}
	\item If $\tau \notin \Star(\sigma_0)$, then $\cech(\mu_*) \circ \gamma$ is the identity. 
	\item If $\tau \in \Star(\sigma_0)$, then 
	\[
		\cech(\mu_*) \circ \gamma |_{\rF(\tau)} = \sum_{k = \ell + 1}^n (-1)^{n + k} \cech(\mu_*) \circ \gamma_{\tau, k}
	\]
\end{itemize}
We observe that $\cech(\mu_*) \circ \gamma_{\tau, k} = 0$ unless $k = n$. Indeed, if $k < n$, then $\gamma(\tau, k)$ belongs to the third case of Lemam~\ref{lem:compute-cech-map}.

When $k = n$, 
the codomain of $\gamma_{\tau, n}$ is 
\[
	\sec(\tau, n) = [i_0, \dots \mid a + 1, \dots, b - 1 \mid b + 1]_{\tau}.
\] 
In particular, $\sec(\tau, n)$ belongs to the second case of Lemma~\ref{lem:compute-cech-map}, so $\cech(\mu_*)$ has the form
\[
	\rF([i_0, \dots \mid a + 1, \dots, b - 1 \mid b + 1]_{\tau}) \to \rF([i_0, \dots \mid a + 1, \dots, b]_{\tau}).
\]
This is nothing other than the natural map
\[
	\rF(\sec(\tau, n)) \to \rF(\tau),
\]
whose inverse is $\gamma_{\tau, n}$ by definition.
This concludes the proof that $\cech(\mu_*) \circ \gamma = \id$. 

\subsection{Constructing the homotopy}

We next turn to the more difficult direction of Theorem~\ref{thm:star-subdivision-at-a-stratum}.

\begin{definition}
\label{def:wedge-product}
    Let $\tau' = [i_0, \dots \mid i_{\ell + 1}, \dots i_{n - 1} \mid b + 1]_{\tau} \in \rP(X', D')$ be a stratum contained in $D'_{b + 1}$, with $\tau = \mu_* \tau'$.
    We assume that:
    \begin{enumerate} 
    	\item
    	\label{item:pos-rel-dim}
    	$D'_{\tau'} \to D_{\tau}$ has positive relative dimension.
    	Concretely, this means that the complement of $\{i_0, \dots, i_{n - 1}\} \subset \{a + 1, \dots, b\}$ has at least two elements.
    	\item 
    	\label{item:not-contained-in-last-div}
    	$D'_{\tau}$ is not contained in $D'_b$, i.e., $i_{n - 1} < b$.
    \end{enumerate}
    Then we define $\tau' \wedge b$ as a connected component of the intersection of $D_{\tau'} \cap D'_b$:
    \[
    	\tau' \wedge b = [i_0, \dots \mid i_{\ell + 1}, \dots, i_{n - 1}, b \mid b + 1]_{\tau}. 
    \]
    As usual, the subscript $[\dots]_{\tau}$ indicates that there is a unique connected component whose image in $X$ contains $D_{\tau}$. 
\end{definition}

\begin{remark}
	The stratum $D'_{\tau' \wedge b}$ is nonempty.
    Indeed, if $v \in |\sigma_0|$ is the primitive integral vector at which we are starring,
	then the cone $|\tau' \wedge b|$ can be described in combinatorial terms as the join of $v$ inside $|\tau|$ with the cone
	\[
		[i_0, \dots, i_\ell \mid i_{\ell + 1}, \dots, i_{n - 1}, b]_{\tau} ,
	\]
	which lies in $\Link(\sigma_0)$.
\end{remark}

Let $\tau'$ satisfy the assumptions of Definition~\ref{def:wedge-product}. 
Then 
\[
	d_n \col \rF(\tau' \wedge b) \to \rF(\tau')
\]
is an isomorphism, since both strata are birationally projective bundles over their common image, $\tau = \mu_* \tau' = \mu_* \tau' \wedge b$.
Define
\[
	\epsilon_{\tau'} = d_n^{-1}:\rF(\tau') \to \rF(\tau' \wedge b)
\]
to be the inverse morphism.
We collect some identities involing $\epsilon_{\tau'}$ for later use.

\begin{lemma}[Identities for $\epsilon$]
\label{lem:epsilon-identities}
    Suppose that $\tau'$ satisfies the assumptions of Definition~\ref{def:wedge-product}.
    \begin{enumerate} 
    	\item \label{item:epsilon-id-one}
        If $\ell + 1 \leq k \leq n$, then $d_k \circ \epsilon_{\tau'}$ is an isomorphism.
    	\item \label{item:epsilon-id-two}
        When $k = n$, $d_n \circ \epsilon_{\tau'} = \id \col \rF(\tau') \to \rF(\tau')$.
    	\item \label{item:epsilon-id-three}
        If $\ell + 1 \leq k \leq n - 1$, then $d_k \tau'$ satisfies the assumptions of Definition~\ref{def:wedge-product}, and
    	\[
    	 	d_k \circ \epsilon_{\tau'} = \epsilon_{d_k \tau'} \circ d_k .
    	 \] 
    \end{enumerate}
\end{lemma}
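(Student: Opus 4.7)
The plan is to establish the three identities in the order (2), (3), (1), since (1) will be an easy consequence of the other two. The main tool throughout is Proposition~\ref{prop:existence}: the horizontal functoriality of $\rho$ (Item~\ref{item:hor-functoriality}) handles (3), while the vertical functoriality (Item~\ref{item:vert-functoriality}) applied to $\mu$ handles (1).

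Identity (2) is just unwinding the definition: by construction, $\epsilon_{\tau'} = d_n^{-1}$, so $d_n \circ \epsilon_{\tau'} = \id$.

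For (3), I first need to verify that $d_k\tau'$ satisfies the two assumptions of Definition~\ref{def:wedge-product} for $\ell+1 \le k \le n-1$, so that $\tau'\wedge b$, $(d_k\tau')\wedge b$, and $\epsilon_{d_k\tau'}$ are all defined; this is immediate since removing $i_k$ from $\{i_{\ell+1},\dots,i_{n-1}\}$ only enlarges its complement in $\{a+1,\dots,b\}$, and $b$ remains outside the list. Moreover, $d_k(\tau'\wedge b) = (d_k\tau')\wedge b$ directly from the definitions. Both of the compositions $d_k \circ \epsilon_{\tau'}$ and $\epsilon_{d_k\tau'}\circ d_k$ are maps $\rF(\tau') \to \rF((d_k\tau')\wedge b)$; rewriting each face map and each $\epsilon$ as a (possibly inverted) $\rho$, the desired equality becomes, after rearrangement,
\[
    \rho_{d_k\tau',\,(d_k\tau')\wedge b}\circ \rho_{(d_k\tau')\wedge b,\,\tau'\wedge b} \;=\; \rho_{d_k\tau',\,\tau'}\circ \rho_{\tau',\,\tau'\wedge b}.
\]
By Proposition~\ref{prop:existence}(\ref{item:hor-functoriality}) both sides collapse to $\rho_{d_k\tau',\,\tau'\wedge b}$, giving (3). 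This identification of both sides as the $\rho$ along a common 3-chain is the key step, and I expect it to be the most delicate part of the argument (mainly bookkeeping in the bar-notation).

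For (1), the case $k=n$ is just (2). For $\ell+1 \le k \le n-1$, (3) reduces (1) to showing that $d_k\col \rF(\tau')\to \rF(d_k\tau')$ is an isomorphism, since $\epsilon_{d_k\tau'}$ is one by construction. Here I use that $\mu_*\tau' = \mu_*(d_k\tau') = \tau$ (removing an index $i_k \in \{a+1,\dots,b\}$ does not change the image, since that divisor is already forced by the exceptional vertex $b+1$). By Lemma~\ref{lem:fibers-of-toroidal-resolution}, both $D'_{\tau'}\to D_\tau$ and $D'_{d_k\tau'}\to D_\tau$ are birationally projective bundles, so $\rF$ applied to each is an isomorphism. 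Applying vertical functoriality (Proposition~\ref{prop:existence}(\ref{item:vert-functoriality})) to $\mu$ and the pair $d_k\tau' \le \tau'$ yields a commutative square whose bottom row is $\rho_{\tau,\tau}=\id$ and whose verticals are isomorphisms, forcing the top row $\rho_{d_k\tau',\,\tau'}=d_k$ to be an isomorphism as well, completing (1).
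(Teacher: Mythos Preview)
Your proof is correct and follows essentially the same approach as the paper: (2) is by definition, (3) is the simplicial identity $d_{n-1}\circ d_k = d_k\circ d_n$ on $\rF(\tau'\wedge b)$ (which, as you observe, is just horizontal functoriality of $\rho$ collapsing both sides to $\rho_{d_k\tau',\,\tau'\wedge b}$), and (1) follows from the fact that all relevant strata are birationally projective bundles over $\tau = \mu_*\tau'$, combined with vertical functoriality of $\rho$. The only organizational difference is that the paper proves (1) directly by noting that $\tau'$, $\tau'\wedge b$, and $d_k(\tau'\wedge b)$ are all projective bundles over $\tau$, whereas you route through (3) first and then show $d_k\colon\rF(\tau')\to\rF(d_k\tau')$ is an isomorphism; both come to the same vertical-functoriality square.
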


\begin{proof}
    For Item~\ref{item:epsilon-id-one}, all three strata involved (that is, $\tau', \tau' \wedge b$, and $d_k(\tau' \wedge b)$) are birationally projective bundles over $\tau = \mu_* \tau'$.
    Item~\ref{item:epsilon-id-two} is by definition.
    Item~\ref{item:epsilon-id-three} comes from rearranging 
    \[
        d_{n - 1} \circ d_k = d_k \circ d_n,
    \]
    which is an identity of maps $\rF(\tau' \wedge b) \to \rF(d_k \tau')$.
\end{proof}

\begin{definition}
\label{def:homotopy-for-star}
    For each $n \geq 0$, we define a map
    \[
    	h_n: \cech_n(X', D'; \rF) \to \cech(X, D; \rF)
    \]
    as follows: For a stratum $\tau'$, $h_n$ is given on $\rF(\tau')$ by $(-1)^n \epsilon_{\tau'}$ if $\tau'$ satisfies the assumptions of Definition~\ref{def:wedge-product}, and $0$ otherwise.
\end{definition}

\subsection{Conclusion of the proof}

To conclude Theorem~\ref{thm:star-subdivision-at-a-stratum}, we show that the maps $(h_n)$ give a homotopy between $\gamma \circ \cech(\mu_*)$ and the identity.
In other words, we show that for each $\tau'$, we have
\[
	h_{n - 1} \circ d + d \circ h_n = \id - \gamma \circ \cech(\mu_*)
\]
on $\rF(\tau')$.

The simplest case occurs if $D'_{\tau'} \notin D'_{b + 1}$.
In this case, $\mu_* \tau'$ does not lie in $\Star(\sigma_0)$, so $\gamma \circ \cech(\mu_*) = \id$, and we are left to prove that
\[
	h_{n - 1} \circ d + d \circ h_n = 0 
\]
on $\rF(\tau')$.
Since we have assumed that $D'_{\tau'} \notin D'_{b + 1}$, $h_n$ is identically $0$ on $\rF(\tau')$.
Similarly, $D'_{d_j \tau'} \notin D'_{b + 1}$, so $h_{n - 1}$ vanishes on the codomain of $d|_{\rF(\tau')}$, and we are done.

We now turn to the case when $D'_{\tau'}$ is contained in $D'_{b + 1}$.
With $\tau = \mu_* \tau'$, we may write
\begin{align*}
	\tau' &= [i_0, \dots i_\ell \mid i_{\ell + 1}, \dots, i_{n - 1} \mid b + 1]_{\tau} \\
	\tau &= [i_0, \dots i_{\ell} \mid a + 1, \dots, b]_{\tau}.
\end{align*}
A computation shows that
\begin{equation}
\label{eq:formula-for-gamma-circ-cech}
	\gamma \circ \cech(\mu_*) = \begin{cases}
		d_{n + 1} \circ \epsilon_{\tau'} & D_{\tau'} \notin D'_b, \tau' \neq \sec(\tau, b) \\
		\displaystyle{\sum_{k = \ell + 1}^n} (-1)^{n + k} \gamma_{\tau, i} \circ \rF(D'_{\tau'} \to D_{\tau}) & \tau' = \sec(\tau, b) \\
		0 & \textrm{else, i.e., } D'_{\tau'} \subset D'_b . 
	\end{cases}
\end{equation}
Note that the conditions defining the first case are exactly the assumptions of Definition~\ref{def:wedge-product}.
We will analyze each of the three cases in turn. 

\emph{First case:} Here, $\tau' \notin D'_b$, $\tau' \neq \sec(\tau, b)$: 
    \begin{align*}
        h_{n-1}\circ d + d\circ h_n
        &= \left[ \sum\limits_{j=0}^n (-1)^j h_{n-1}\circ d_j \right] 
        + \left[\sum\limits_{j=0}^{n+1} (-1)^j d_j\circ h_{n} \right] \\
        &= \left[ \sum\limits_{j=0}^{n-1} (-1)^{j+n-1} \epsilon_{d_j\tau'} \circ d_j \right]
        + \left[ \sum\limits_{j=0}^{n+1} (-1)^{j+n} d_j\circ \epsilon_{\tau'} \right] \\
        &= \sum\limits_{j=0}^{n-1} (-1)^{j+n} \left(d_j\circ \epsilon_{\tau'} -\epsilon_{d_j \tau'} \circ d_j \right)\\
        &\quad\quad + d_n\circ \epsilon_{\tau'} - d_{n+1}\circ \epsilon_{\tau'}
    \end{align*}
    In the second line, we have used that $h_{n - 1} \circ d_n = 0$, since $d_n \tau'$ no longer satisfies the assumptions of Definition~\ref{def:wedge-product}.
    According to Lemma~\ref{lem:epsilon-identities}, we have the identities
    \begin{align*}
        d_j\circ \epsilon_{\tau'}|_{F(\tau')} &= \epsilon_{d_j \tau'} \circ d_j|_{F(\tau')},\text{ (for }0\le j \le n-1\text{)},\text{ and}\\
        d_n\circ \epsilon_{\tau'}|_{F(\tau')} &= \id,
    \end{align*}
    so we are done.

\emph{Second case:} Here, $\tau' = \sec(\tau,b)$, and
in this case, $h_n$ vanishes on $\rF(\tau')$:
    \begin{align*}
        h_{n-1}\circ d + d\circ h_n|_{F(\tau')} &= \left( \sum\limits_{j=0}^n (-1)^j h_{n-1}\circ d_j \right) + \left( \sum\limits_{j=0}^{n+1} (-1)^j d_j\circ h_{n}  \right)\\
        &= \sum\limits_{j=\ell+1}^{n-1} (-1)^{j+n-1} \epsilon_{d_j\tau'} \circ d_j .
    \end{align*}
    We expand $\id - \gamma \circ \cech(\mu_*)$:
    \begin{align*}
        \id -\gamma\circ \rC(\mu_*) 
        &=  \id - \left(\sum\limits_{k=\ell+1}^{n} (-1)^{n+k}\gamma_{\tau,k}\circ \rF(D'_{\tau'} \to D_{\tau})\right)\\
        &= \id - (-1)^{2n} \gamma_{\tau,n}\circ \rF(D'_{\tau'} \to D_{\tau})
        + \left(\sum\limits_{k=\ell+1}^{n-1} (-1)^{n+k-1}\gamma_{\tau,k}\circ \rF(D'_{\tau'} \to D_{\tau}) \right)\\
        &= \sum\limits_{k=\ell+1}^{n-1} (-1)^{n+k-1}\gamma_{\tau,k}\circ \rF(D'_{\tau'} \to D_{\tau}).
    \end{align*}
    Going from the second line to the third, we have used that $\gamma_{\tau, n}$ is the inverse of $\rF(D'_{\tau'} \to D_{\tau})$ by definition.

    To conclude, we match up terms via $k = j$; we need to show that
    \begin{equation}
    \label{eq:case-two-last-identity}
    	\gamma_{\tau,j}\circ \rF(D'_{\tau'} \to D_\tau) = \epsilon_{d_j\tau'}\circ d_j, \quad \ell + 1 \leq j \leq n - 1
    \end{equation}
    on $\rF(\tau')$.
    This follows from considering the diagram
    \[
    	\begin{tikzcd}
    		\rF(d_j \tau' \wedge b) \ar[dr, "a"'] \ar[r, "d_n"] & \rF(d_j \tau') \ar[d] & \rF(\tau') \ar[dl] \ar[l, "d_j"'] \\
    		& \rF(\tau),
    	\end{tikzcd}
    \]
    where the horizontal arrows come from inclusions between strata, and the vertical arrows come from the map $\mu$. 
    Moreover, the vertical arrows are isomorphisms, since the strata upstairs are generically projective bundles over $\tau$.
    The diagram is commutative by Proposition~\ref{prop:existence}.
    We observe that $d_j \tau' \wedge b = \sec(\tau, j)$, so that $a^{-1} = \epsilon_{d_j \tau'}$. 
    Then \eqref{eq:case-two-last-identity} follows from a diagram chase.

    \emph{Third case:} When $D'_{\tau'} \subset D'_b$. 
    In this case, $h_n$ vanishes on $\rF(\tau')$, and we compute:
    \begin{align*}
        h_{n-1}\circ d|_{F(\tau')}+d\circ h_n &= \left( \sum\limits_{j=0}^n (-1)^j h_{n-1}\circ d_j \right)
        +\left( \sum\limits_{j=0}^{n+1} (-1)^j d_j\circ h_{n} \right)\\
        &= (-1)^{n-1} h_{n-1}\circ d_{n-1} \quad +\quad \left( 0 \right)\\
        &= (-1)^{n-1} (-1)^{n-1} \epsilon_{(d_{n-1} \tau')} \circ d_{n-1} \\
        &= \epsilon_{(d_{n-1} \tau')} \circ d_{n-1} \\
        &= \id
    \end{align*}
    The last line follows from Lemma~\ref{lem:epsilon-identities} applied to $d_{n - 1} \tau'$, which satisfies the assumptions of Definition~\ref{def:wedge-product}.
    This concludes the proof of Theorem~\ref{thm:star-subdivision-at-a-stratum}.
    \qed

\section{Blowing up subvarieties of snc pairs}
\label{sec:blowing-up}

\begin{definition}
\label{def:transverse-subvariety}
    Let $(X, D)$ be an snc pair over $R$ or $k$. 
    A regular subscheme $Z \subset X$ has \emph{simple normal crossings} with $D$ if, for any $x \in X$, there are local coordinates $z_1, \dots, z_k, \dots, z_d$ in $\fm_x \subset \cO_{X, x}$ so that $z_1, \dots, z_k$ are local equations for the components of $D$ containing $x$,
     and $\{z_{i_1}, \dots, z_{i_\ell}\}$ cut out $Z$ for some $\{i_1, \dots, i_\ell\} \subset \{1, \dots, d\}$.
\end{definition}

The effect on $(X, D)$ of blowing up a subvariety $Z$ having simple normal crossings with $D$ depends on whether $Z$ is properly contained in a stratum, is equal to a stratum, or is not contained in a stratum. 

\begin{lemma}
\label{lem:contains-implies-equals}
    Let $(X, D)$ be a vertical snc pair over $R$ or $k$, and let $Z \subset X$ be a connected subvariety which has simple normal crossings with $D$.
    If $Z$ contains a stratum of $(X, D)$, then $Z$ is equal to a stratum.
\end{lemma}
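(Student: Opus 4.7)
The plan is to show that $Z$ locally coincides with an intersection of components of $D$ near the given stratum, and then to use connectedness of $Z$ to upgrade this local coincidence to a global equality with a connected component of such an intersection.

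First, I would observe that since $Z$ is regular (hence locally irreducible) and connected, $Z$ is irreducible. Let $S \subset Z$ be the stratum in the hypothesis, and define $J = \{j : D_j \supseteq Z\}$, so that $Z$ is contained in the scheme-theoretic intersection $D_J = \bigcap_{j \in J} D_j$. The main step is to show that $Z$ coincides with $D_J$ in an open neighborhood of any point $x \in S$.

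To prove this local statement, pick $x \in S$ and choose local coordinates $z_1, \dots, z_d$ at $x$ as in Definition~\ref{def:transverse-subvariety}: the components of $D$ through $x$ are cut out by $z_1, \dots, z_k$, and $Z$ is cut out locally by some subset $\{z_{i_1}, \dots, z_{i_\ell}\}$. The stratum $S$ through $x$ is cut out by a subset $\{z_{k_1}, \dots, z_{k_m}\} \subseteq \{z_1, \dots, z_k\}$. The containment $S \subseteq Z$ gives $(z_{i_1}, \dots, z_{i_\ell}) \subseteq (z_{k_1}, \dots, z_{k_m})$, and since both ideals are generated by parts of a regular system of parameters, passing to $\mathfrak{m}/\mathfrak{m}^2$ forces $\{i_1, \dots, i_\ell\} \subseteq \{k_1, \dots, k_m\} \subseteq \{1, \dots, k\}$. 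In particular, each $z_{i_a}$ is a local equation for a component of $D$ through $x$ that contains $Z$ locally at $x$; since $Z$ is irreducible, such a component contains all of $Z$, so its index lies in $J$. Conversely, any $j \in J$ with $D_j$ passing through $x$ must appear among the $i_a$. Hence near $x$, $Z$ agrees scheme-theoretically with $D_J$.

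Finally, let $S'$ be the connected component of $D_J$ containing $x$. Since $Z$ is irreducible and contained in $D_J$, it lies in $S'$. The local computation gives $Z = S'$ on an open neighborhood of $x$, so $Z$ is open in $S'$; it is also closed in $S'$ because $Z$ is a closed subscheme of $X$. Connectedness of $S'$ then yields $Z = S'$, which exhibits $Z$ as a stratum of $(X, D)$. The only step requiring care is the local identification in step two; once that is in place, the connectedness argument is essentially formal.
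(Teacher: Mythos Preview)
Your proof is correct and follows essentially the same approach as the paper: a local coordinate computation showing that near a point of the given stratum, $Z$ coincides with an intersection of components of $D$, followed by an irreducibility/connectedness argument to globalize. The only differences are in presentation: the paper works at a \emph{general} point of $D_\sigma$ (so that all of $z_1,\dots,z_k$ cut out $D_\sigma$ there) and concludes in one line by taking Zariski closure, whereas you work at an arbitrary point of $S$, explicitly identify the index set $J$, and finish with an open-closed argument in the connected component of $D_J$.
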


\begin{proof}
    Let $D_{\sigma}$ be a stratum contained in $Z$.
    At a general point $x$ of $D_{\sigma}$, we have
    \[
    	\{i_1, \dots, i_{\ell}\} \subset \{1, \dots, k\}
    \]
    in the notation of Definition~\ref{prop:transverse-subvariety}.
    Hence, in an open neighborhood $U$ of $x$, $Z \cap U = D_{\tau} \cap U$ for some stratum $D_{\tau}$.
    After Zariski closure, $Z = D_{\tau}$.
\end{proof}

Let $(X, D)$ be an snc pair over $R$ or $k$.
If $Z$ has simple normal crossings with $D$, then the blowup $X' = \Bl_Z X$ is an snc model over $R$ or $k$, with reduced special fiber $D'$.
There is an induced morphism of snc pairs
\[
	\mu\col(X', D') \to (X, D),
\]
and the goal of this section is to show that the corresponding map from $\Sd(X', D'; \rF)$ to $\Sd(X, D; \rF)$ is a homotopy equivalence.
We first treat the simplest case.

\begin{lemma}
\label{lem:not-contained-in-stratum}
	Let $Z$ be a connected subvariety of $X$ which has simple normal crossings with $D$, and suppose that $Z$ is not contained in a stratum.
	If $\mu\col (X', D') \to (X, D)$ is the blowup of $Z$, then 
	\[
		\Sd(\mu_*)\col \Sd(X', D', \rF) \to \Sd(X, D; \rF)
	\]
	is an isomorphism of chain complexes.
\end{lemma}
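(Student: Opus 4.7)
The plan is to show that $\mu_\ast$ induces a bijection of posets $\rP(X', D') \to \rP(X, D)$, and that on each summand $\rF(\sigma') \to \rF(\mu_\ast \sigma')$ of the subdivision complex, the induced map is an isomorphism, so that $\Sd(\mu_\ast)$ is an isomorphism of chain complexes on the nose. The point is that since $Z$ is not contained in a stratum, blowing up $Z$ affects only the strata transversally and introduces no new divisor into $D$.

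First, I would observe that the hypothesis forces $Z \not\subset D$: since $Z$ is irreducible and each irreducible component $D_i$ is itself a stratum, $Z \subset D = \bigcup D_i$ would give $Z \subset D_i$ for some $i$. In particular, over $R$ we conclude $Z \not\subset X_0$, so the pullback of $X_0$ under the blowup $\mu\col X' \to X$ contains no multiple of the exceptional divisor $E$. Consequently $X'_0$ is the strict transform of $X_0$, the reduced special fiber $D' = X'_{0,\mathrm{red}}$ coincides with the strict transform of $D$, and its irreducible components are precisely the strict transforms $D'_i$ of the $D_i$. The same conclusion is immediate over $k$, since we have already arranged $Z \not\subset D$.

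Next, using the local model from Definition~\ref{def:transverse-subvariety}, one checks that any intersection $D'_I := \bigcap_{i \in I} D'_i$ coincides with $\Bl_{Z \cap D_I}(D_I)$, so that the strict transform yields a bijection $\mu_\ast\col \rP(X', D') \to \rP(X, D)$ preserving the partial ordering. In particular, the induced map $\Sd_n(\mu_\ast)$ on the set of nondegenerate chains is a bijection for each $n$. Moreover, for each stratum $\sigma$, the intersection $Z \cap D_\sigma$ is (locally, a coordinate subvariety and hence) regular, has simple normal crossings with $D|_{D_\sigma}$, and has positive codimension in $D_\sigma$, since at least one of the local equations for $Z$ involves a coordinate not cutting out any $D_i$.

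It follows that $D'_{\sigma'} \to D_{\mu_\ast \sigma'}$ is the blowup of a smooth variety along a smooth closed subvariety of positive codimension, hence a proper birational morphism between smooth varieties. Since $\rF$ is a stable birational invariant (Theorem~\ref{thm:colliot-thelene}), the induced map $\rF(D'_{\sigma'}) \to \rF(D_{\mu_\ast \sigma'})$ is an isomorphism, and this is precisely the vertical morphism that defines $\Sd(\mu_\ast)$ on the summand indexed by the chain through $\sigma'$. Since $\Sd(\mu_\ast)$ is a bijection of summands given by isomorphisms compatibly with the differentials (by Proposition~\ref{prop:existence}), it is an isomorphism of chain complexes. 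The main thing to verify with care is that the blowup of each stratum is along a \emph{proper} smooth subvariety, for which the irreducibility of $Z$ together with the ``not contained in a stratum'' hypothesis is exactly what is needed.
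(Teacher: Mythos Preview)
Your proof is correct and follows the same strategy as the paper: establish that $\mu_*$ is a bijection of posets with birational maps on strata, whence $\Sd(\mu_*)$ is an isomorphism. The paper's version is shorter because it invokes Lemma~\ref{lem:contains-implies-equals} (applied to both $Z$ and the exceptional divisor) in place of your explicit local computation; conversely, your identification $D'_I = \Bl_{Z\cap D_I}(D_I)$ makes the bijection and birationality completely transparent.

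Two small remarks. First, the implication ``$Z\not\subset D \Rightarrow Z\not\subset X_0$'' uses $D = X_{0,\mathrm{red}}$, i.e., the snc \emph{model} hypothesis; for a general vertical snc pair you should instead argue directly that $Z\not\subset D$ forces the exceptional divisor out of $D' = \mu^{-1}(D)_{\mathrm{red}}$, which is all that is needed. Second, your observation that ``at least one'' local equation for $Z$ lies outside $\{z_1,\dots,z_k\}$ is true but understates what you have actually shown: since $Z\not\subset D_i$ for every $i$, \emph{all} local equations for $Z$ lie among $z_{k+1},\dots,z_d$, and it is this stronger fact that makes the identity $\bigcap_{i\in I} D'_i = \Bl_{Z\cap D_I}(D_I)$ immediate in each chart.
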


\begin{proof}
   	It follows from Lemma~\ref{lem:contains-implies-equals} that neither $Z$ nor the exceptional divisor (which has simple normal crossings with $D'$) contain any strata.
        Therefore, $\mu_*\col \rP(X', D') \to \rP(X, D)$ is a bijection (with inverse given by strict transform), and  $\mu$ maps each $D'_{\tau'}$ birationally onto $D_{\mu_*\tau}$.
   	These two properties imply that $\Sd(\mu_*)$ is an isomorphism.
\end{proof}

\begin{proposition}
\label{prop:transverse-subvariety}
    Let $\mu\col(X', D') \to (X, D)$ be the blowup of a vertical snc pair over $R$ or $k$ in a regular subscheme $Z \subset X$ which has simple normal crossings with $D$. 
    Then 
    \[
    	\Sd(\mu_*):\Sd(X', D'; \rF) \to \Sd(X, D; \rF)
    \]
    is a homotopy equivalence.
\end{proposition}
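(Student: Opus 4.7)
The argument is by case analysis on the position of $Z$ relative to the strata of $(X, D)$. When $Z$ is not contained in any stratum, Lemma~\ref{lem:not-contained-in-stratum} gives that $\Sd(\mu_*)$ is an isomorphism. When $Z$ coincides with a stratum $D_{\sigma_0}$, the map $\mu$ is a standard star subdivision of $(X,D)$ at $\sigma_0$, and Corollary~\ref{cor:blowup-of-snc-in-stratum} applies directly. The remaining case---in which $Z$ is properly contained in the smallest stratum $D_{\sigma_0}$ containing it---is where the real work lies, and the plan is to handle it by inductive reduction to the two easier cases above.

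For the inductive step, I would factor $\mu$ through a common refinement with the star subdivision at $\sigma_0$. Let $\pi\col Y = \Bl_{D_{\sigma_0}} X \to X$ be that star subdivision, and set $W \eqdef \Bl_{\tilde D_{\sigma_0}} X'$, where $\tilde D_{\sigma_0} \subset X'$ is the strict transform of $D_{\sigma_0}$. Since $Z \subsetneq D_{\sigma_0}$ has simple normal crossings with $D$, one checks that $\tilde D_{\sigma_0}$ is the relevant connected component of $\bigcap_{i \in \vertex(\sigma_0)} \tilde D_i$, and hence is a stratum of the snc pair $(X', D')$. By Corollary~\ref{cor:blowup-of-snc-in-stratum}, both $W \to X'$ and $\pi$ then induce homotopy equivalences on $\Sd$. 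A local toric calculation identifies the remaining morphism $W \to Y$ with the blowup of $\pi^{-1}(Z) \subset Y$ (this is the standard commutativity of blowups along $Z \subset D_{\sigma_0}$), and $\pi^{-1}(Z)$ is a regular subscheme of $Y$ with simple normal crossings with $D_Y$. Applying the proposition inductively to $W \to Y$ then completes the argument via the commutative square
\[
\begin{tikzcd}
    W \ar[r] \ar[d] & X' \ar[d, "\mu"] \\
    Y \ar[r, "\pi"] & X.
\end{tikzcd}
\]

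The main obstacle is choosing the induction variable so that this step strictly reduces complexity. A naive induction on $\codim_{D_{\sigma_0}} Z$ fails, since the smallest stratum of $(Y, D_Y)$ containing $\pi^{-1}(Z)$ is the exceptional divisor $E_{\sigma_0}$, and $\codim_{E_{\sigma_0}} \pi^{-1}(Z) = \codim_{D_{\sigma_0}} Z$. I plan instead to induct on $\codim_X Z$, noting that when $\codim_X D_{\sigma_0} \geq 2$ one has $\codim_Y \pi^{-1}(Z) = \codim_X Z - \codim_X D_{\sigma_0} + 1 < \codim_X Z$, so the inductive hypothesis applies. The delicate edge case is $\codim_X D_{\sigma_0} = 1$---when $D_{\sigma_0} = D_i$ is a single divisor and the above star subdivision is the identity---where this factorization yields no progress. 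In that case I expect to argue separately, either by constructing an explicit homotopy at the level of \v{C}ech complexes via Theorem~\ref{thm:cech-comparison} (in the spirit of the proof of Theorem~\ref{thm:star-subdivision-at-a-stratum}), or by leveraging the fact that $Z$ is not contained in any stratum of the sub-snc pair $(D_i, D|_{D_i})$, so that the restriction of $\mu$ to strata inside $D_i$ falls within the scope of Lemma~\ref{lem:not-contained-in-stratum}.
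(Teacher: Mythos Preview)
Your case split and the two easy cases are fine, and the factorization through the star subdivision $\pi\colon Y=\Bl_{D_{\sigma_0}}X\to X$ is a reasonable idea. The problem is that your induction collapses into the unresolved edge case after a single step. Once you pass to $Y$, the smallest stratum of $(Y,D_Y)$ containing $\pi^{-1}(Z)$ is the exceptional divisor $E_{\sigma_0}$ itself: $\pi^{-1}(Z)=E_{\sigma_0}|_Z$ is the full $\bP^{c-1}$-bundle over $Z$, so it is not contained in any $\tilde D_j\cap E_{\sigma_0}$ (which are proper sub-bundles for $j\in\vertex(\sigma_0)$, and restrictions over $D_{\sigma_0}\cap D_j\not\supset Z$ for $j\notin\vertex(\sigma_0)$). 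Thus the ``smaller'' instance $W\to Y$ already has its minimal stratum of codimension~$1$, and your inductive machinery hands you back exactly the case you declared delicate. Your option~(b) for that case is not a proof: knowing that $Z$ avoids the strata of $(D_i,D|_{D_i})$ does not let you invoke Lemma~\ref{lem:not-contained-in-stratum} for the ambient pair, since the exceptional divisor $D'_{b+1}$ creates new strata that are genuinely contained in the strict transform of $D_i$.

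The paper instead handles the case $Z\subsetneq D_{\sigma_0}$ directly and uniformly in $\codim D_{\sigma_0}$, by writing down an explicit homotopy inverse $\gamma$ to $\cech(\mu_*)$ at the level of \v{C}ech complexes. The map $\gamma$ is simply ``take strict transform'' on every summand (every stratum of $(X,D)$ has a strict transform in $(X',D')$ to which it is birational), and one checks $\cech(\mu_*)\circ\gamma=\id$ on the nose; the homotopy between $\gamma\circ\cech(\mu_*)$ and $\id$ is built from the maps $\epsilon_{\tau'}\colon\rF(\tau')\to\rF(\tau'\wedge b)$ exactly as in the star-subdivision argument. This is essentially your option~(a), but carried out once and for all rather than only in the edge case; doing it this way makes the inductive reduction unnecessary, and also avoids the local verifications (the blowup-commutativity $W\cong\Bl_{\pi^{-1}(Z)}Y$ and the snc property of $\pi^{-1}(Z)$ in $Y$) that your square would still require.
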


\begin{proof}

By treating the connected components of $Z$ separately, we may assume that $Z$ is connected.
After Corollary~\ref{cor:blowup-of-snc-in-stratum} and Lemma~\ref{lem:not-contained-in-stratum}, the only remaining case of Proposition~\ref{prop:transverse-subvariety} is when $Z$ is properly contained in a stratum.
We will restrict ourselves to this case.

First, we introduce some notation. 
Let $\sigma_0$ be the smallest stratum of $(X, D)$ which contains $Z$.
We adopt similar conventions as in the proof of Theorem~\ref{thm:star-subdivision-at-a-stratum}
For instance, we write
\begin{align*}
	D &= D_0 + \cdots + D_a + D_{a + 1} + \cdots + D_b, \\
	D' &= D'_0 + \cdots + D'_a + D'_{a + 1} + \cdots + D'_b + D'_{b + 1} 
\end{align*}
where $D_{a + 1}, \dots, D_b$ are the divisors containing $D_{\sigma_0}$, $D'_i$ is the strict transform of $D_i$, and $D'_{b + 1}$ is the exceptional divisor.

Proceeding as in the proof of Theorem~\ref{thm:star-subdivision-at-a-stratum}, 
it is enough to show that 
\[
	\cech(\mu_*) \col \cech(X, D; \rF) \to \cech(X, D; \rF)
\]
is a homotopy equivalence, where the \v{C}ech complexes are defined with respect to the orderings chosen above, and $\cech(\mu_*)$ just as in Definition~\ref{def:cech-map}.
The computation of $\cech(\mu_*)$ from Lemma~\ref{lem:compute-cech-map} holds verbatim.

We repeatedly use the observation that a stratum $\tau'$ of $(X', D')$ is the strict transform of a stratum in $(X, D)$ (i.e., maps birationally to a stratum in $(X, D)$) if and only if it is not contained in the exceptional divisor $D'_{b + 1}$.

\begin{definition}
\label{def:maps}
    \begin{enumerate} 
    	\item For any stratum $\tau$ in $(X, D)$ with strict transform $\tau'$, we define 
    	\[
    		\gamma_{\tau} \col \rF(\tau) \to \rF(\tau')
    	\]
    	to be the inverse of the isomorphism $\rF(\tau') \to \rF(\tau)$ induced by $\mu$.
    	\item For $\tau'$ such that $D'_{\tau'} \subset D'_{b + 1}$ and $D'_{\tau'} \not \subset D'_b$, we define
    	\[
    		\epsilon_{\tau'}:\rF(\tau') \to \rF(\tau' \wedge b)
    	\]
    	as the inverse of the isomorphism
    	\[
    		d_n \col \rF(\tau' \wedge b) \to \rF(\tau').
    	\]
    	Here, $\tau' \wedge b$ is the unique connected component of $D'_{\tau'} \cap D'_b$ whose image contains $D_{\tau}$,
    	and $d_n$ is an isomorphism since both $D'_{\tau' \wedge b}$ and $D'_{\tau'}$ are projective bundles over $D_{\mu_*\tau}$; cf. the proof of Theorem~\ref{thm:star-subdivision-at-a-stratum}.
    \end{enumerate}
\end{definition}

It is straightforward to check that there is a morphism of complexes
\[
	\gamma \col \cech(X, D; \rF) \to \cech(X', D'; \rF),
\]
given by $\gamma_{\tau}$ on each summand $\rF(\tau)$.
In fact, $\gamma$ is an isomorphism onto the subcomplex of $\cech(X', D'; \rF)$ spanned by the summands $\rF(\tau')$ such that $\tau' \not \subset D'_{b + 1}$. 

It follows from the definitions that $\cech(\mu_*) \circ \gamma$ is equal to the identity, so we show that $\gamma \circ \cech(\mu_*)$ is homotopic to the identity.
Using Lemma~\ref{lem:compute-cech-map} (which holds verbatim in our present situation), we compute $\gamma \circ \cech(\mu_*)$ on $\rF(\tau')$:
\begin{equation}
\label{eq:gamma-circ-cech-subvar}
    \gamma \circ \rC(\mu_*) = 
    \begin{cases}
        \id & D'_{\tau'} \not \subset D'_{b + 1} \\
        d_{n+1} \circ \epsilon_{\tau'} & D'_{\tau'} \subset D'_{b+1} \text{ and } D'_{\tau'} \not\subset D'_{b} \\
        0&\text{else}.
    \end{cases}
\end{equation}
Define a homotopy $h_n$ on $\rF(\tau')$ as follows:
\[
    h_n =
    \begin{cases}
        (-1)^n\epsilon_{\tau'} &D'_{\tau'} \subset D'_{b+1} \text{ and } D'_{\tau'} \not\subset D'_{b} \\
        0 &\text{else}.
    \end{cases}
\]
Our goal is to show the homotopy relation
\begin{equation}
\label{eq:homotopy-rel-subvar}
	d \circ h_n + h_{n - 1} \circ d = \id - \gamma \circ \cech(\mu_*)
\end{equation}
on the summand $\rF(\tau')$.
We divide into the three cases indicated by Equation~\ref{eq:gamma-circ-cech-subvar}.

\emph{First case:} $D'_{\tau'} \not \subset D'_{b + 1}$.
It is clear that $\id = \gamma \circ \cech(\mu_*)$ from the definitions, so we show that the right-hand side of Equation~\eqref{eq:homotopy-rel-subvar} is $0$.
Clearly, $h_n = 0$ on $\rF(\tau')$. Moreover, $h_{n - 1} \circ d = 0$ on $\rF(\tau')$ since the strata $d_j \tau'$ are not contained in $D'_{b + 1}$, so we are done.

\emph{Second case:} Here, $D'_{\tau'} \subset D'_{b+1}$ and $D'_{\tau'} \not\subset D'_{b}$. We compute:
    \begin{align*}
        d\circ h_n + h_{n-1}\circ d 
        &= \left(\sum\limits_{j=0}^{n+1}(-1)^{j+n} d_j \circ \epsilon_{\tau'}\right) 
        + \left( \sum\limits_{j=0}^n (-1)^{j}h_{n-1}\circ d_j\right) \\
        &=\left(\sum\limits_{j=0}^{n+1}(-1)^{j+n} d_j \circ \epsilon_{\tau'}\right) 
        + \left( \sum\limits_{j=0}^{n-1} (-1)^{n+j-1}\epsilon_{d_j\tau'}\circ d_j\right)\\
        &= \left( \sum\limits_{j=0}^{n-1} (-1)^j+n (d_j\circ \epsilon_{\tau'} - \epsilon_{d_j\tau'}\circ d_j) \right) 
        + d_n\circ \epsilon_{\tau'} - d_{n+1}\circ \epsilon_{\tau'}.
    \end{align*}
    We conclude by the following observations:
    \begin{itemize}
        \item $d_j\circ \epsilon_{\tau'} - \epsilon_{d_j\tau'}\circ d_j = 0$,
        \item $d_n\circ \epsilon_{\tau'} = \id$,
        \item $d_{n+1}\circ \epsilon_{\tau'} = \gamma\circ \rC(\mu_*)$,
    \end{itemize}
    which are proved by direct computation (cf. Lemma~\ref{lem:epsilon-identities}).

    \emph{Third case:} $\tau'\subset D'_{b+1}$ and $\tau'\subset D'_b$:
    \begin{align*}
        d\circ h_n + h_{n-1}\circ d
        &= \left(\sum\limits_{j=0}^{n+1}(-1)^{j+n} d_j \circ \epsilon_{\tau'}\right) 
        + \left( \sum\limits_{j=0}^n (-1)^{j}h_{n-1}\circ d_j \right)\\
        &= \left( 0 \right) + \left( \sum\limits_{j=0}^n (-1)^{j}h_{n-1}\circ d_j \right)\\
        &= (-1)^{2n-2}\epsilon_{d_{n-1}\tau'}\circ d_{n-1} \\
        &= \id.
    \end{align*}
    This completes the proof by the observation that $\gamma\circ \rC(\mu_*) = 0$ on $\rF(\tau')$. 
\end{proof}

\section{The categorical motivic volume} 
\label{sec:the-categorical-motivic-volume}

\subsection{The Kahn--Sujatha category}

In this section, we briefly recall the category $\kscat_k$ constructed \cite{kahn-sujatha}.
By definition, $\kscat_k$ is defined to be the localization
\[
    \kscat_k = \SB^{-1}\SmProj_k
\]
in the sense of Gabriel--Zisman \cite{gabriel-zisman}, where $\SB$ is the set of dominant morphisms $X \to Y$ in $\SmProj_k$ inducing a purely transcendental extension of function fields.
As with any Gabriel--Zisman localization, $\kscat_k$ satisfies the property that any functor
\[
    \rF : \SmProj_k \to \cC
\]
sending morphisms in $\SB$ to isomorphisms, extends uniquely to a functor $\rF \col \kscat_k \to \cC$.

As in the introduction, we write $\{X\}_{\re}$ for the image of $X \in \SmProj_k$ in $\kscat_k$.
A special case of the main result of \cite{kahn-sujatha} is the following:

\begin{theorem}[Kahn--Sujatha]
\label{thm:kahn-sujatha}
    For $X, Y \in \SmProj_k$, there is a natural bijection
    \[
        \Mor_{\kscat_k}(\{Y\}_{\re}, \{X\}_{\re}) \simeq Y(k(X))/\rR,
    \]
    where the right-hand side is the set of $\rR$-equivalence classes of rationality maps from Definition~\ref{def:intro-r-equiv}.
\end{theorem}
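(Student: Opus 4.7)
The plan is to construct mutually inverse bijections between the morphism set $\Mor_{\kscat_k}(\{Y\}_{\re}, \{X\}_{\re})$ in the Gabriel--Zisman localization $\SB^{-1}\SmProj_k$ and the set of $\rR$-equivalence classes of rational maps $Y \dra X$.

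First I would define a map $\Psi$ from $\rR$-equivalence classes of rational maps $Y \dra X$ to morphisms in $\kscat_k$. Given $f \col Y \dra X$, resolve its indeterminacy via a birational morphism $\pi \col \tilde Y \to Y$ in $\SmProj_k$ with an extension $\tilde f \col \tilde Y \to X$ such that $\tilde f = f \circ \pi$. Since $\pi$ is birational and hence lies in $\SB$, the roof
\[
    Y \xleftarrow{\pi} \tilde Y \xrightarrow{\tilde f} X
\]
determines a morphism $\{\tilde f\}_{\re} \circ \{\pi\}_{\re}^{-1}$ in $\kscat_k$. Independence of the choice of resolution follows by dominating any two resolutions by a third. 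For well-definedness on direct $\rR$-equivalence classes, I would resolve the indeterminacy of a homotopy $F \col \bP^1 \times Y \dra X$ and restrict to the fibers over $0, \infty \in \bP^1$; since the projection $p \col \bP^1 \times Y \to Y$ lies in $\SB$ (hence is an isomorphism in $\kscat_k$) and the two fiber inclusions are sections of $p$, the two resulting morphisms in $\kscat_k$ coincide.

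The second step is surjectivity of $\Psi$. Here I would first verify that $\SB$ admits a calculus of fractions in $\SmProj_k$: the Ore conditions follow from resolving indeterminacy (to straighten zig-zags) and from a closure property of $\SB$ under appropriate pullbacks constructed via fibered products of graphs followed by further resolution. Consequently, every morphism in $\kscat_k$ is represented by a single roof $Y \xleftarrow{s} Z \xrightarrow{g} X$ with $s \in \SB$. By the defining property of $\SB$, the extension $k(Y) \hookrightarrow k(Z)$ is purely transcendental, so the generic fiber $Z_{k(Y)}$ is birational to $\bP^N_{k(Y)}$ for some $N \geq 0$ and admits a dense set of $k(Y)$-points, which produce a rational section $\sigma \col Y \dra Z$ of $s$. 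Then $g \circ \sigma \col Y \dra X$ is a rational map whose image under $\Psi$ is $\{g\}_{\re} \circ \{s\}_{\re}^{-1}$, as one checks by comparing roofs after resolution of indeterminacy.

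The hard part will be injectivity: if $f, g \col Y \dra X$ satisfy $\Psi(f) = \Psi(g)$, one must construct a concrete $\rR$-equivalence between them. By the calculus of fractions, this equality produces a common dominating morphism $s \col Z \to Y$ in $\SB$, a morphism $h \col Z \to X$, and two rational sections $\sigma_f, \sigma_g \col Y \dra Z$ of $s$ such that $h \circ \sigma_f = f$ and $h \circ \sigma_g = g$ as rational maps. Since $Z_{k(Y)}$ is birational to $\bP^N_{k(Y)}$, the two $k(Y)$-points determined by $\sigma_f$ and $\sigma_g$ span a line $\bP^1_{k(Y)}$. Spreading this line out over a dense open of $Y$ gives a rational map $\Phi \col \bP^1 \times Y \dra Z$ over $Y$ whose restrictions to $\{0\} \times Y$ and $\{\infty\} \times Y$ are $\sigma_f$ and $\sigma_g$, and composing with $h$ produces the desired direct $\rR$-equivalence $h \circ \Phi \col \bP^1 \times Y \dra X$. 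The main obstacle is precisely this final geometric upgrade: passing from an abstract coincidence of morphisms in the localized category to a genuine $\bP^1$-family of rational maps between the given varieties, which requires compatible choices of resolutions together with the elementary rational connectedness of projective space.
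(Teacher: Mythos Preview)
The paper does not give a proof; the statement is quoted from Kahn--Sujatha with a citation. Evaluating your argument on its own, there is a genuine gap.

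Your proof hinges on the claim that $\SB$ admits a calculus of fractions in $\SmProj_k$. The Ore square can indeed always be filled (any $s \in \SB$ has a rational section since its generic fiber is a rational variety over an infinite field, and one then resolves indeterminacy), but the cancellation axiom fails. Take $s \col \bP^1_k \to \Spec k$ and two distinct points $f, g \col \Spec k \to \bP^1_k$: then $sf = sg$, yet for every $t \col D \to \Spec k$ in $\SB$ the maps $ft$ and $gt$ are constant with distinct values, hence unequal. Without cancellation, morphisms in the Gabriel--Zisman localization are represented only by arbitrary zig-zags, not by single roofs, so both your surjectivity step (a roof presentation of an arbitrary morphism) and your injectivity step (a common $\SB$-refinement of two equal roofs) break down. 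This is no accident: the failure of cancellation is precisely the shadow of the fact that distinct morphisms can become equal after localization, which is exactly the $\rR$-equivalence phenomenon you are trying to describe. Kahn--Sujatha avoid the problem by localizing in two stages, first inverting only the birational morphisms (where a calculus of fractions \emph{does} hold, yielding the category of rational maps) and then inverting the projections $\bP^n \times X \to X$; in the rational-map category every such projection already has a section, so the second step is a quotient by a congruence, and that congruence is identified with $\rR$-equivalence. Your line-through-two-points picture belongs to that second stage, but it must be run in the rational-map category rather than in $\SmProj_k$.
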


\begin{remark}
    The universal property of $\kscat_k$ shows that, for a smooth, proper variety $X$ over $k$, the following are equivalent:
    \begin{enumerate} 
        \item $X$ is universally $\rR$-trivial.
        \item For any contravariant stable birational invariant $\rF\col \SmProj_k^\opp \to \Ab$, valued in abelian groups, the natural homomorphism $\rF(X) \to \rF(\Spec k)$ is an isomorphism.
    \end{enumerate}
        This is analogous to a result of Merkurjev \cite{merk}, who proved a similar characterization of universal $\CH_0$-triviality in terms of unramified elements in cycle modules.
        
    To prove that (2) implies (1), the idea is that for a given $T \in \SmProj_k$, one can consider the functor
    \[
        \rF_T(X) = \bZ[T(k(X))/\rR],
    \]
    where the right-hand side is the free abelian group on $\rR$-equivalence classes of rational maps.
    If the natural homomorphism $\rF_T(X) \to \rF_T(\Spec k) \simeq \bZ$ is an isomorphism for all $T$, then $X$ is universally $\rR$-trivial by Yoneda's lemma for $\kscat_k$.
\end{remark}

\subsection{Making categories additive}

Associated to any category $\cC$, there is a natural way to extend $\cC$ to an additive category $\cC \subset \bZ[\cC]$, so that any functor $\cC \to \cA$, where $\cA$ is additive, extends uniquely to a functor $\bZ[\cC] \to \cA$.
The construction is as follows:
\begin{enumerate} 
    \item Let $\cC'$ be the category with the same objects as $\cC$, but where $\Hom_{\cC'}(x, y)$ is the free abelian group on $\Hom_{\cC}(x, y)$, with composition defined by extending the composition in $\cC$ bilinearly. 
    The category $\cC'$ is a preadditive category.
    \item Next, let $\bZ[\cC]$ be the category obtained from $\cC'$ by formally adjoining a zero object $0$, as well as finite coproducts $\bigoplus x_i$. 
    The morphisms from $\bigoplus x_i$ to $\bigoplus y_i$ are given by matrices of morphisms in $\cC'$, and composition is given by matrix multiplication.
\end{enumerate}

\begin{lemma}
\label{lem:k-group}
    Let $\cC$ be an essentially small category, and let $S$ be the set of isomorphism classes of objects in $\cC$. 
    The natural homomorphism
    \begin{equation}
    \label{eq:kzero-split}
        \bZ[S] \to \rK_0(\bZ[\cC])
    \end{equation}
    is an isomorphism, where $\bZ[S]$ is the free abelian group on $S$, and the homomorphism is determined by the map of sets $x \in S \mapsto [x] \in \rK_0(\bZ[\cC])$.
\end{lemma}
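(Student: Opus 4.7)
The plan is to verify that the natural homomorphism $\sigma : \bZ[S] \to \rK_0(\bZ[\cC])$ is an isomorphism by checking surjectivity and then constructing an explicit inverse, which will establish injectivity.

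Surjectivity will be immediate from the construction of $\bZ[\cC]$: every object is, by definition, a finite tuple $(x_1, \ldots, x_n)$ of objects from $\cC$ (with the empty tuple giving the zero object). Therefore any class in $\rK_0(\bZ[\cC])$ equals $\sum_i [x_i]$, which lies in the image of $\sigma$.

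For the inverse, I define $\rho : \rK_0(\bZ[\cC]) \to \bZ[S]$ on objects by $\rho([(x_1, \ldots, x_n)]) = \sum_i [x_i]$, where $[x_i] \in S$ denotes the isomorphism class of $x_i$ in $\cC$. Since $\rho$ is additive under direct sums by construction, the only remaining step is to verify that it is invariant under isomorphism in $\bZ[\cC]$: that is, if $A = (x_1, \ldots, x_n) \cong B = (y_1, \ldots, y_m)$ in $\bZ[\cC]$, then $\sum_i [x_i] = \sum_j [y_j]$ in $\bZ[S]$. The tool for this will be the $0$-th Hochschild homology
\[
\HH_0(\cC') = \bigoplus_{x \in \cC} \End_{\cC'}(x) \Big/ \langle fg - gf : f \col x \to y, \ g \col y \to x \text{ in } \cC \rangle.
\]
The Hochschild trace $\tr(\id_A) = \sum_i [\id_{x_i}]$ is an isomorphism invariant of $A$, by the cyclic identity $\tr(\phi \psi) = \tr(\psi \phi)$ applied to $\phi \col A \to B$ and its inverse $\psi \col B \to A$. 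If I can show the natural map $\bZ[S] \to \HH_0(\cC')$ sending $[x] \mapsto [\id_x]$ is injective, then the equality $\tr(\id_A) = \tr(\id_B)$ in $\HH_0(\cC')$ will force the multisets $\{[x_i]\}$ and $\{[y_j]\}$ to agree in $S$, giving the iso-invariance of $\rho$.

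The hardest step is the injectivity of $\bZ[S] \to \HH_0(\cC')$. I will handle it by a direct analysis of the commutator relations: a hypothetical relation $\sum_x n_x \id_x = \sum_k c_k (f_k g_k - g_k f_k)$ in $\bigoplus_x \End_{\cC'}(x)$ can be projected to each component $\End_\cC(z)$ and compared coefficient-by-coefficient in the free abelian group $\bZ[\End_\cC(z)]$. The balance equations arising from coefficients of non-identity morphisms (which must vanish on both sides) force the contributions of the $c_k$ at each identity component to cancel, ultimately yielding $n_x = 0$ for all $x$. Once this combinatorial step is in place, the inverse $\rho$ is well-defined, and $\rho \circ \sigma = \id_{\bZ[S]}$ is immediate on generators, completing the proof.
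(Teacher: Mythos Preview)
Your overall strategy matches the paper's: construct the inverse map $\rho\colon \rK_0(\bZ[\cC])\to\bZ[S]$ by the formula $\bigoplus x_i \mapsto \sum [x_i]$. The paper's one-line proof simply asserts that this is ``additive for the split exact structure'' and hence descends to $\rK_0$; you correctly recognize that the nontrivial content is \emph{isomorphism invariance}, and you propose to establish it via the Hattori--Stallings trace into $\HH_0(\cC')$. This is a legitimate route, and indeed $\tr(\id_{\bigoplus x_i})=\sum_i[\id_{x_i}]\in\HH_0(\cC')$ is an isomorphism invariant by the cyclic trace property.

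However, your justification of the key injectivity $\bZ[S]\hookrightarrow\HH_0(\cC')$ has a gap. Because each relation $gf-fg$ identifies two basis elements of $\bigoplus_z\bZ[\End_\cC(z)]$, the quotient $\HH_0(\cC')$ is \emph{free} on the set $E/{\sim}$, where $E=\coprod_z\End_\cC(z)$ and $\sim$ is generated by $gf\sim fg$. Injectivity therefore reduces to the set-theoretic statement
\[
\id_x\sim\id_y\ \Longrightarrow\ x\cong y \text{ in }\cC.
\]
Your proposed ``balance equations'' do not prove this: a single commutator with $g_kf_k=\id_z$ but $f_kg_k\neq\id_{b_k}$ contributes $+c_k$ to the $\id_z$-coefficient and nothing to any other identity coefficient, and the vanishing of non-identity coefficients does not force these contributions to cancel (summing all coefficients across all $z$ yields only $\sum_s n_s=0$, a single relation). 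What does work is an explicit construction: given a chain $\id_x=h_0\sim h_1\sim\cdots\sim h_n=\id_y$ with $h_{k-1}=g_kf_k$ and $h_k=f_kg_k$ (where $f_k\colon z_{k-1}\to z_k$, $g_k\colon z_k\to z_{k-1}$), an induction on $k$ shows $g_k\cdots g_n f_n\cdots f_k=h_{k-1}^{\,n-k+1}$, whence
\[
(g_1\cdots g_n)(f_n\cdots f_1)=h_0^{\,n}=\id_x,\qquad (f_n\cdots f_1)(g_1\cdots g_n)=h_n^{\,n}=\id_y,
\]
so $f_n\cdots f_1\colon x\to y$ is an isomorphism in $\cC$. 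With this fix, your argument goes through and supplies the detail the paper omits.
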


\begin{proof}
    The function $\bigoplus x_i \mapsto \sum [x_i] \in \bZ[S]$ is additive for the split exact structure on $\bZ[\cC]$. 
    Hence, it descends to a homomorphism $\rK_0(\bZ[\cC]) \to \bZ[S]$, giving an inverse to \eqref{eq:kzero-split}.
\end{proof}

\begin{lemma}
\label{lem:extending-to-ksadd}
    Let $\rF \col \SmProj_k \to \cA$ be a stable birational invariant valued in an additive category $\cA$.
    Then $\rF$ extends uniquely to an additive functor 
    \[
        \rF : \bZ[\kscat_k] \to \cA.
    \]
\end{lemma}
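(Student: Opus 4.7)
The proof will proceed in two steps, each invoking a universal property.

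First, I will show that $\rF$ factors through $\kscat_k$. By Definition~\ref{def:stabirinv}, $\rF$ sends birational morphisms and projections $X \times \bP^n \to X$ to isomorphisms, so in particular it sends every morphism in $\SB$ (dominant morphisms inducing purely transcendental extensions of function fields) to an isomorphism. To check this, note that any such morphism becomes isomorphic over the generic point of the base to a projective space, and by generic smoothness plus the universal property of blowups one can compare it to a projective bundle after birational modifications of source and target; applying $\rF$ and using that $\rF$ inverts birational morphisms and $\bP^n$-bundle projections (the latter by Theorem~\ref{thm:colliot-thelene}, which is why the projective bundle hypothesis in Definition~\ref{def:stabirinv} is automatic from birational invariance) yields that $\rF$ of the original morphism is an isomorphism. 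By the universal property of the Gabriel--Zisman localization $\kscat_k = \SB^{-1} \SmProj_k$, the functor $\rF$ extends uniquely to a functor $\rF \col \kscat_k \to \cA$.

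Second, I will extend this functor to an additive functor on $\bZ[\kscat_k]$. This is by the universal property of the construction $\cC \rightsquigarrow \bZ[\cC]$ described just before Lemma~\ref{lem:k-group}: any functor from $\cC$ to an additive category $\cA$ extends uniquely to an additive functor from $\bZ[\cC]$ to $\cA$. Explicitly, the extension is forced on morphisms by $\bZ$-bilinearity of composition in $\bZ[\cC]$, and on the freely adjoined direct sums $\bigoplus x_i$ by sending them to $\bigoplus \rF(x_i)$ in $\cA$ (which exists since $\cA$ is additive); uniqueness is automatic.

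Combining the two steps gives the unique additive extension $\rF \col \bZ[\kscat_k] \to \cA$, and uniqueness at each stage guarantees uniqueness overall. There is no real obstacle here since both universal properties are formal; the only mild subtlety is verifying that the projective bundle condition in Definition~\ref{def:stabirinv} (together with birational invariance) really suffices to invert every morphism in $\SB$, and this is exactly the content of the Kahn--Sujatha/Colliot-Th\'el\`ene Theorem~\ref{thm:colliot-thelene} combined with the construction of $\kscat_k$ in \cite{kahn-sujatha}.
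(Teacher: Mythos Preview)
Your proposal is correct and takes essentially the same approach as the paper, which simply says to combine the universal property of $\kscat_k$ with the universal property of $\bZ[\cC]$. Your extra justification for why $\rF$ inverts every morphism in $\SB$ is already recorded in the Remark following Definition~\ref{def:stabirinv}, so the digression about generic smoothness and blowups is unnecessary (and slightly awkward); otherwise the argument matches.
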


\begin{proof}
    Combine the universal property of $\kscat_k$ with the universal property of $\bZ[\cC]$.
\end{proof}

\subsection{Constructing the functor}

\begin{theorem}
\label{thm:existence-of-functor}
    Let $\rF\col\SmProj_k \to \cA$ be a stable birational invariant, where $\cA$ is an additive category.
    There exists a functor 
    \[
        \sVol_{\rF}:\SmProj_{\bar K} \to K^b(\cA),
    \]
    satisfying the following properties:
    \begin{enumerate} 
         \item 
         \label{item:main-thm-homotopy-equiv}
         For any semistable model $(X, D)$ of $X_{\bar K}$ over $k \llbracket t^{1/d} \rrbracket$, for any $d > 0$, there is a homotopy equivalence between $\sVol(X_{\bar K})$ and $\Sd(X, D; \rF)$.
         \item 
         \label{item:main-thm-birational}
         $\sVol_{\rF}(-)$ sends birational morphisms to isomorphisms. 
     \end{enumerate} 
\end{theorem}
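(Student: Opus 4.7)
The plan is to construct $\sVol_{\rF}$ by choosing a semistable model for each object of $\SmProj_{\bar K}$ and invoking the functoriality of $\Sd(-;\rF)$ on $\TorMod_{\infty}$ from Lemma~\ref{lem:functoriality-of-sd}, then to establish independence of all choices using the homotopy invariance results of \S\ref{sec:star-subdivision} and \S\ref{sec:blowing-up}.

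For objects, given $X_{\bar K}$, semistable reduction \cite{toroidal-embeddings} yields a semistable model $(X, D)$ over $R_d$ for some $d > 0$; set $\sVol_{\rF}(X_{\bar K}) := \Sd(X, D; \rF)$, viewed in $K^b(\cA)$. To show this is canonical, I would compare a second semistable model $(X', D')$ over $R_e$ as follows. First base change both to the common base $R_{\mathrm{lcm}(d,e)}$; the base changes may develop toroidal singularities but remain vertical toroidal models with common geometric generic fiber. Apply Corollary~\ref{cor:toroidal-resolution} to each to obtain snc models together with homotopy equivalences on $\Sd$. The two resulting snc models are now proper smooth over a common base with the same generic fiber, and a relative form of weak factorization (in the spirit of the setup used in \cite{NicShin19}) connects them via a zigzag of blowups along smooth centers transverse to the boundary. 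Each such blowup yields a homotopy equivalence by Proposition~\ref{prop:transverse-subvariety}, and composing produces the desired identification.

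For morphisms $f\col Y_{\bar K} \to X_{\bar K}$, I would choose models $(Y, E)$ and $(X, D)$, pass to a common base by the procedure above, resolve indeterminacy of the induced rational extension $Y \dra X$ by blowing up $Y$ along snc centers, and define $\sVol_{\rF}(f)$ as the composite
\[
    \Sd(Y, E; \rF) \xleftarrow{\sim} \Sd(Y', E'; \rF) \xrightarrow{\Sd(f_*)} \Sd(X, D; \rF),
\]
with the left arrow supplied by Proposition~\ref{prop:transverse-subvariety} and the right arrow by Definition~\ref{def:morphism-between-subdivision}.

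The main obstacle will be verifying well-definedness on morphisms: two different choices of models and extensions of $f$ must induce the same morphism in $K^b(\cA)$. I would address this by producing a common dominating diagram via further snc blowups, using that the naturality statement in Item~(\ref{item:vert-functoriality}) of Proposition~\ref{prop:existence} forces the two induced chain maps to agree. Functoriality under composition reduces similarly to the existence of a single diagram of models where both morphisms extend. Property~(\ref{item:main-thm-homotopy-equiv}) is tautological from the construction, while Property~(\ref{item:main-thm-birational}) follows because a birational morphism in $\SmProj_{\bar K}$ can, after snc blowups, be realized as a morphism of snc models inducing a bijection on posets of strata, hence an isomorphism on $\Sd$.
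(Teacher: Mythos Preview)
Your overall strategy coincides with the paper's: build $\sVol_{\rF}$ from $\Sd(-;\rF)$ on semistable models, compare models over a common base via toroidal resolution plus weak factorization, and extend morphisms by resolving indeterminacy. The paper packages the well-definedness slightly differently, taking $\sVol_{\rF}(X_{\bar K})$ as the inverse limit over the cofiltered category of all semistable models (so canonicity is automatic once one checks all transition maps are isomorphisms in $K^b(\cA)$), but your explicit-choice formulation is equivalent.

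There is, however, a genuine gap in your argument for Property~(\ref{item:main-thm-birational}). You assert that an arbitrary birational morphism $X'_{\bar K} \to X_{\bar K}$ can, after snc blowups on the models, be realized as a morphism of snc models inducing a \emph{bijection} on strata posets. This is not true in general, and in any case you have not indicated how to arrange it. Two ingredients are missing. First, you must invoke weak factorization over $\bar K$ to reduce to the case where $X'_{\bar K} \to X_{\bar K}$ is the blowup along a single smooth center $Z_{\bar K}$; without this reduction the exceptional locus over $\bar K$ can be complicated and there is no reason to expect the spread-out map to behave simply on strata. Second, once reduced to a single blowup, the scheme-theoretic closure $Z$ of $Z_{\bar K}$ in a chosen model need not be smooth or have simple normal crossings with the boundary; one must perform an embedded resolution of $Z$ relative to the snc divisor $Y_0$ (as in \cite[\S 1.2.3]{WeakFactAKMW}) to arrange this. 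Only then does Proposition~\ref{prop:transverse-subvariety} apply to the blowup $Y' = \Bl_Z Y \to Y$. (Your intuition about a bijection on strata is correct in this special situation, since $Z$ is horizontal and hence not contained in any stratum, so Lemma~\ref{lem:not-contained-in-stratum} applies; but the reduction steps are essential and not merely cosmetic.)
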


Before embarking on the proof, we show that Theorem~\ref{thm:existence-of-functor} implies the results stated in the introduction.

\begin{proof}[Proof of Theorem~\ref{thm:intro-categorical-motivic-volume} and Theorem~\ref{thm:intro-arbitrary-additive-category}]
    We prove Theorem~\ref{thm:intro-arbitrary-additive-category}, since Theorem~\ref{thm:intro-categorical-motivic-volume} is the special case when $\rF = \{-\}_{\re}$.
    By the universal properties of $\kscat_k$ and $\bZ[\kscat_k]$, the functor $\sVol_{\rF}$ descends to a functor
    \[
        \sVol_{\rF}: \bZ[\kscat_{\bar K}] \to K^b(\cA).
    \]
    such that if $(X, D)$ is a semistable model of $X_{\bar K}$, then $\sVol_{\rF}(X_{\bar K})$ is homotopic to $\Sd(X, D; \rF)$.
    On the other hand, by Lemma~\ref{lem:inclusion-of-nondegenerate} and \v{C}ech comparison (Theorem~\ref{thm:cech-comparison}), $\Sd(X, D; \rF)$ is homotopic to the \v{C}ech complex $\cech(X, D; \rF)$.
\end{proof}

\begin{proof}[Proof of Corollary~\ref{cor:intro-re-motivic-volume}]
We recall that the Grothendieck group of $\bZ[\kscat_{\bar K}]$ (with the split exact structure) is $\bZ[\RE_{\bar K}]$ by Lemma~\ref{lem:k-group}.
The exact structure of $K^b(\bZ[\kscat_k])$, on the other hand, is given by the exact triangles. 
The functor
\[
    \bZ[\kscat_k] \to K^b(\bZ[\kscat_k])
\]
sending an object into degree $0$ is exact, and induces an isomorphism between Grothendieck groups (see, e.g., \cite{rose}).
Finally, the functor
\[
    \sVol_{\re} : \bZ[\kscat_{\bar K}] \to K^b(\bZ[\kscat_k])
\]
is exact, since it sends direct sums to direct sums.
Then Corollary~\ref{cor:intro-re-motivic-volume} follows from taking the induced homomorphism on Grothendieck groups.
\end{proof}

We return to the task of proving Theorem~\ref{thm:existence-of-functor} along the lines of \cite[Appendix A]{NicShin19} and \cite[Theorem 4]{KonTsch19}.
We first collect the necessary tools (semistable reduction, weak factorization) in the following lemma:

\begin{lemma}
\label{lem:semistable-reduction}
    \begin{enumerate} 
        \item \label{item:semistable-red-snc}
        If $X$ is an snc model over $R_d$, then exists an integer $n_0 > 0$ such that for all $n_0 \mid n$, there is a semistable model $X'$ over $R_{de}$ and a morphism $X' \to X$, inducing the identity on geometric generic fibers.
        \item \label{item:semistable-red-ss}
        If $X$ is a semistable model, then in the context of Item~\ref{item:semistable-red-snc}, we may take $n_0 = 1$, and $X'$ may be chosen so that 
        \[
            \Sd(f_*): \Sd(X', D'; \rF) \to \Sd(X, D; \rF)
        \]
        is a homotopy equivalence.
        \item \label{item:semistable-red-bir}
        If $f\col X' \to X$ is a morphism of snc models of $X_{\bar K}$ over $R$ inducing an isomorphism on generic fibers $X'_K \to X_K$, then $\Sd(f_*)$ is a homotopy equivalence. 
    \end{enumerate}
\end{lemma}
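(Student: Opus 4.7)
The plan is to address the three parts in sequence, combining the homotopy invariance results of Sections~\ref{sec:star-subdivision} and~\ref{sec:blowing-up} with classical inputs from semistable reduction and weak factorization.

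For Item~(\ref{item:semistable-red-snc}), I will invoke the Kempf--Knudsen--Mumford--Saint-Donat semistable reduction theorem. Writing the special fiber of $X$ as $\sum m_i D_i$, set $n_0 = \mathrm{lcm}(m_i)$. For any $e$ with $n_0 \mid e$, the base change $Y = X \times_{R_d} R_{de}$ naturally carries the structure of a toroidal model over $R_{de}$, whose cone complex is obtained from that of $X$ by rescaling along the base direction. A toroidal resolution of $Y$ (Theorem~\ref{thm:toroidal-resolution-of-singularities}), with the subdivisions chosen so as to absorb the multiplicities (possible precisely because $n_0 \mid e$), yields a semistable model $X'$ over $R_{de}$ equipped with a morphism $X' \to X$ inducing the identity on geometric generic fibers.

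For Item~(\ref{item:semistable-red-ss}), we repeat the construction assuming $X$ is already semistable, so that $n_0 = 1$ and any $e$ works. The essential observation is that the reduced special fibers of $X$ and $Y = X \times_{R_d} R_{de}$ are canonically isomorphic as $k$-schemes, and the natural map $\rP(Y, E) \to \rP(X, D)$ is an isomorphism of posets identifying corresponding strata; hence, in view of Definition~\ref{def:morphism-between-subdivision}, the induced map $\Sd(Y, E; \rF) \to \Sd(X, D; \rF)$ is an isomorphism of chain complexes. The toroidal resolution $X' \to Y$, which is a combination of a barycentric subdivision and a sequence of star subdivisions at strata, induces a homotopy equivalence on subdivision complexes by Corollary~\ref{cor:toroidal-resolution}, together with Theorem~\ref{thm:star-subdivision-at-a-stratum} for any further star subdivisions needed to achieve semistability. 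Composing yields the required homotopy equivalence; this is the main subtlety of the lemma, as it is the step that bridges the combinatorics on the $R_d$- and $R_{de}$-sides.

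For Item~(\ref{item:semistable-red-bir}), I will apply the strong form of the weak factorization theorem for snc pairs over a DVR (as used in \cite{NicShin19}): the morphism $f \col X' \to X$ factors into a zigzag of blowups along regular centers having simple normal crossings with the boundary divisor, all of which lie in the special fiber since $f$ is an isomorphism on generic fibers. By Proposition~\ref{prop:transverse-subvariety}, each blowup in the factorization induces a homotopy equivalence on subdivision complexes, and hence so does $\Sd(f_*)$.
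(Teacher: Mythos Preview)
Your proposal is correct and follows essentially the same approach as the paper: Item~(\ref{item:semistable-red-snc}) via KKMS semistable reduction with $n_0 = \mathrm{lcm}(m_i)$; Item~(\ref{item:semistable-red-ss}) via the observation that the base change has isomorphic reduced special fiber (so the map on subdivision complexes is an isomorphism, not merely a homotopy equivalence) followed by Corollary~\ref{cor:toroidal-resolution}; and Item~(\ref{item:semistable-red-bir}) via weak factorization and Proposition~\ref{prop:transverse-subvariety}.

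One point worth making explicit in Item~(\ref{item:semistable-red-bir}): you correctly invoke the \emph{strong} form of weak factorization, in which each intermediate $V_j$ carries a morphism $f_j\col V_j \to X$, but your final clause ``hence so does $\Sd(f_*)$'' compresses a short induction. The paper runs descending induction on $j$, using the commutative triangle with vertices $V_j$, $V_{j+1}$, $X$ and applying two-out-of-three in $K^b(\cA)$: if $\phi_j$ is the blowup then $\Sd(f_{j,*}) = \Sd(f_{j+1,*}) \circ \Sd(\phi_{j,*})$; if $\phi_j^{-1}$ is the blowup then $\Sd(f_{j+1,*}) = \Sd(f_{j,*}) \circ \Sd((\phi_j^{-1})_*)$, and one solves for $\Sd(f_{j,*})$. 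Without the morphisms $f_j$ to $X$, merely knowing that each edge of the zigzag is a homotopy equivalence would not directly yield anything about $\Sd(f_*)$, since $f$ is not a composite of those edges.
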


\begin{proof}
    Item~\ref{item:semistable-red-snc} is the Semistable Reduction Theorem from \cite{toroidal-embeddings}, where $n_0$ is the least common multiple of the multiplicites of the components of $X_0$.
    For Item~\ref{item:semistable-red-ss}, the point is that if $X$ is a semistable model over $S_d$, then the base change $X'' = X \times_{S_d} S_{de}$ ($S_d = \Spec R_d$, $S_{de} = \Spec R_{de}$) is a toroidal model.
    The reduction of the special fiber $X''_0$ is isomorphic to $X_0$, so the map
    \[
        \Sd(X'', D''; \rF) \to \Sd(X, D; \rF)
    \]
    is an isomorphism, not merely a homotopy equivalence. 
    To conclude, we take a toroidal resolution $X' \to X''$ as in Corollary~\ref{cor:toroidal-resolution}; the composition $X' \to X$ satisfies the conclusions of Item~\ref{item:semistable-red-ss}.

    For Item~\ref{item:semistable-red-bir}, we apply the Weak Factorization Theorem from \cite{at} to $f\col X' \to X$, which yields a sequence of birational maps between regular schemes
    \[
        \begin{tikzcd}
            X' = V_0 \ar[r, "\phi_{0}", dashed] & V_{1} \ar[r, "\phi_{1}", dashed] & \cdots \ar[r, "\phi_{\ell - 1}", dashed] & V_{\ell} = X, 
        \end{tikzcd}
    \]
    satisfying the following properties:
    \begin{enumerate} 
        \item For each $0 \leq j \leq \ell - 1$, the induced maps $V_j \dra V_\ell$ extend to morphisms $f_j \col V_j \to V_\ell = X$, which are isomorphisms over the complement of the special fiber $D \subset X = V_\ell$.
        Finally, $f_0 = f$.
        Moreover, the preimage of $D$ in $V_\ell$ is an snc divisor, say $D_i \subset V_i$.
        \item For each $0 \leq j \leq \ell - 1$, either $\phi_j$ or $\phi_j^{-1}$ is the blowup of a regular subscheme having simple normal crossings with $D_i$ or $D_{i + 1}$, respectively.
    \end{enumerate}
    We prove by descending induction that $\Sd(f_{j,*})$ is a homotopy equivalence for all $j$. 
    Indeed, there is a diagram (which commutes since it commutes at the generic point of $V_j$):
    \[
        \begin{tikzcd}
            V_{j} \ar[d, dashed, "\phi_j"] \ar[dr, "f_j"] &  \\
            V_{j + 1} \ar[r, "f_{j + 1}"] & V_\ell = X .
        \end{tikzcd}
    \]
    We apply Proposition~\ref{prop:transverse-subvariety} to whichever of $\phi_j$, $\phi_j^{-1}$ is a morphism.
    In the base case $j = \ell - 1$, we take $f_\ell$ to be the identity.
\end{proof}

\begin{proof}[Proof of Theorem~\ref{thm:existence-of-functor}]
    Let $X_{\bar K}$ be a smooth, projective variety over $\bar K$.
    Let $\cSS(X_{\bar K})$ be the category of semistable models of $X_{\bar K}$, i.e., pairs $(X, \varphi)$ where $X/R_d \in \TorMod_{\infty}$ is a semistable model for some $d > 0$, and $\varphi$ is an isomorphism between the geometric generic fiber of $X$ over $R_d$ and $X_{\bar K}$.
    A morphism $(X', \varphi') \to (X, \varphi)$ in $\cSS(X_{\bar K})$ is a morphism in $\TorMod_{\infty}$ compatible with $\varphi$ and $\varphi'$.
    We usually omit $\varphi$ from the notation.

    The category $\cSS(X_{\bar K})$ is cofiltered, i.e., any two $X, X'$ are dominated by a third $X'' \to X$, $X'' \to X'$. 
    Indeed, by Lemma~\ref{lem:semistable-reduction}, we find semistable models $Y \to X$, $Y' \to X'$ such that $Y, Y'$ are both defined over the same base $R_d$.
    After further extension of the base (and applying Lemma~\ref{lem:semistable-reduction} once more), we may suppose that the generic fibers of $Y$ and $Y'$ over $R_d$ are isomorphic.
    Then $X$ and $X'$ are birational, so by taking a log resolution of the closure of the graph of $X \dra X'$, we see that both are dominated by an snc model, say $W$, of $X_{\bar K}$.
    Then we apply Lemma~\ref{lem:semistable-reduction} again to get a semistable model $X'' \to W$ mapping to $X$ and $X'$.

    If $f\col X' \to X$ is a morphism in $\cSS(X_{\bar K})$, then $\Sd(f_*)$ is a homotopy equivalence. 
    Indeed, running the argument of the previous paragraph, we produce a diagram
    \[
        \begin{tikzcd}
            W \ar[r, "\phi"] \ar[dr, "\psi"] & Y' \ar[r, "a"] \ar[d] & X' \ar[d, "f"] \\
            & Y \ar[r, "b"] & X,
        \end{tikzcd}
    \]
    where $\Sd(a_*)$ and $\Sd(b_*)$ are homotopy equivalences by Lemma~\ref{lem:semistable-reduction}, Item~\ref{item:semistable-red-ss}, and $\phi$ and $\psi$ are birational morphisms. 
    By Lemma~\ref{lem:semistable-reduction}, Item~\ref{item:semistable-red-bir}, $\Sd(\phi_*)$ and $\Sd(\psi_*)$ are homotopy equivalences.

    We define the functor $\sVol_{\rF}$ on objects by
    \begin{equation}
    \label{eq:define-svol-inverse}
        \sVol(X_{\bar K}) := \limit \Sd(X, X_0; \rF),
    \end{equation}
    as $X$ runs over $\cSS(X_{\bar K})$.
    We have shown that all of the morphisms in the inverse system \eqref{eq:define-svol-inverse} are isomorphisms, and that the inverse limit is cofiltered. 
    These properties formally imply that the inverse limit exists and is naturally isomorphic to $\Sd(X, X_0; \rF)$ for each $X \in \cSS(X_{\bar K})$.

    If $f_{\bar K} \col X_{\bar K} \to Y_{\bar K}$ is a morphism in $\SmProj_{\bar K}$, then for any semistable model $Y$ of $Y_{\bar K}$, we may find a morphism
    \[
        f\col X \to Y
    \]
    in $\TorMod_{\infty}$ whose geometric generic fiber is isomorphic (as a morphism) to $f_{\bar K}$. 
    If $\cSS(f_{\bar K})$ is the category of such morphisms, then a similar argument as the one given for $\cSS(X_{\bar K})$ shows that $\cSS(f_{\bar K})$ is cofiltered, 
    and that the inverse limit
    \begin{equation}
        \label{eq:define-svol-morphism}
        \sVol(f_{\bar K}) = \limit \Sd(f_*), \quad f \in \cSS(f_{\bar K})
    \end{equation}
    defines a morphism from $\sVol(X_{\bar K}) \to \sVol(Y_{\bar K})$.
    
    We check functoriality: Given $f_{\bar K}\col X_{\bar K} \to Y_{\bar K}$, $g_{\bar K}\col Y_{\bar K} \to Z_{\bar K}$, and $h_{\bar K} = g_{\bar K} \circ f_{\bar K}$, we may find morphisms
    \[
        \begin{tikzcd}
            X \ar[r, "f"] & Y \ar[r, "g"] & Z
        \end{tikzcd}
    \]
    in $\TorMod_{\infty}$ recoving $f_{\bar K}$ and $g_{\bar K}$ on generic fibers.
    Setting $h = f \circ g$, we get two commutative diagrams:
     \[
        \begin{tikzcd}[column sep=large]
            \sVol(X_{\bar K}) \ar[d, "\simeq"] \ar[r, "\sVol(f_{\bar K})"] & \sVol(Y_{\bar K}) \ar[r, "\sVol(g_{\bar K})"] \ar[d, "\simeq"] & \sVol(Z_{\bar K}) \ar[d, "\simeq"] \\
            \Sd(X, \rF) \ar[r, "\Sd(f_*)"] & \Sd(Y, \rF) \ar[r, "\Sd(g_*)"] & \Sd(Z, \rF),
        \end{tikzcd} \quad
        \begin{tikzcd}[column sep=large]
            \sVol(X_{\bar K}) \ar[d, "\simeq"] \ar[r, "\sVol(h_{\bar K})"] & \sVol(Z_{\bar K}) \ar[d, "\simeq"] \\
            \Sd(X, \rF) \ar[r, "\Sd(h_*)"] & \Sd(Z, \rF).
        \end{tikzcd}
    \]
    Since $\Sd(h_*) = \Sd(f_*) \circ \Sd(g_*)$, functoriality for $\sVol(-)$ follows.

    The only part of Theorem~\ref{thm:existence-of-functor} remaining is Item~\ref{item:main-thm-birational}.
    By the Weak Factorization Theorem in $\SmProj_{\bar K}$ from \cite{WeakFactW}, \cite{WeakFactW2}, it is enough to prove that if $Z_{\bar K} \subset X_{\bar K}$ is a smooth, irreducible subvariety, and $f_{\bar K} \col X'_{\bar K} \to X_{\bar K}$ is the blowup along $Z_{\bar K}$, then $\sVol(f_{\bar K})$ is an isomorphism. 
    (The reduction is similar to the proof of Lemma~\ref{lem:semistable-reduction}, Item~\ref{item:semistable-red-bir}.)
    
    Let $X$ be a semistable model of $X_{\bar K}$; after extending the base to $R_d$, $d$ sufficiently divisible, we may assume that $Z_{\bar K}$ descends to a subvariety of the generic fiber of $X$. 
    We claim that we may find a morphism of snc models $Y \to X$ over $R_d$, inducing an isomorphism on generic fibers, such that the scheme-theoretic closure (say, $Z$) of $Z_{K_d}$ in $Y$ is smooth and has simple normal crossings with $Y_0$.
    Indeed, this is an instance of an ``embedded resolution with a simple normal crossings divisor at year $0$,'' applied to the embedding of the scheme-theoretic closure of $Z_{X_d}$ into $X$, with the simple normal crossings divisor $X_0 \subset X$, cf. \cite[\S 1.2.3]{WeakFactAKMW}.
    Let $f\col Y' \to Y$ be the blowup of $Y$ at $Z$.
    Then $f$ belongs to $\cSS(f_{\bar K})$, so $\sVol(f_{\bar K})$ is an isomorphism if and only if $\Sd(f_*)$ is a homotopy equivalence, but Proposition~\ref{prop:transverse-subvariety} implies that $\Sd(f_*)$ is a homotopy equivalence.
\end{proof}

\section{Specialization and variation}
\label{sec:spec-and-var}

In this section, we record two basic results on the behavior of $\rR$-equivalence in smooth, proper families. 
First, in Lemma~\ref{lem:specialization-of-universal-r-triv}, we give an elementary proof that universal $\rR$-triviality specializes.
Second, in Theorem~\ref{thm:constructibility}, we prove a basic result on how $\rR$-equivalence classes vary in a family.

\subsection{An elementary proof of specialization}
\label{ssec:elementary-proof}

Corollary~\ref{cor:intro-re-motivic-volume} immediately implies that universal $\rR$-triviality specializes in smooth, proper families.
In this section, we give an elementary proof, which is based on the argument that a decomposition of the diagonal specializes.

Strictly speaking, we have stated the definition of $\rR$-equivalence (Definition~\ref{def:intro-r-equiv}) only over an algebraically closed field of characteristic $0$, but the definition over an arbitrary field is identical; we invoke the definition over an arbitrary field in the following statement:

\begin{lemma}
\label{lem:universal-r-trivial}
    Let $K$ be an arbitrary field, and let $X/K$ be a smooth, proper variety with function field $K(X)$.
    The following are equivalent:
    \begin{enumerate}
        \item $X$ is universally $\rR$-trivial over $K$.
        \item For any $p \in X(K)$, the constant map 
        \begin{equation*}
            \begin{tikzcd}
                \Spec K(X) \ar[r] & \Spec K \ar[r, "p"] & X
            \end{tikzcd}
        \end{equation*}
        is $\rR$-equivalent to the inclusion of the generic point $\eta_X\col \Spec K(X) \to X$.
        \item For any $p \in X(K)$, the diagonal map $\delta_X = \eta_X \times_K \id_{K(X)} \col \Spec K(X) \to X_{K(X)}$ is $\rR$-equivalent to the constant map $p \times_K \Spec K(X) \col \Spec K(X) \to X_{K(X)}$.
    \end{enumerate}
\end{lemma}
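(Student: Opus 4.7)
The plan is to prove $(1) \Rightarrow (2) \Leftrightarrow (3) \Rightarrow (1)$, with all the substance in the final implication. The implication $(1) \Rightarrow (2)$ is immediate: taking $Y = X$ in the definition of universal $\rR$-triviality shows that $X(K(X))/\rR$ is a singleton, and both $\eta_X$ and the constant map through $p$ lie in $X(K(X))$. For $(2) \Leftrightarrow (3)$, I would use the canonical bijection $X(K(X)) \simeq X_{K(X)}(K(X))$ coming from the universal property of the fiber product, which sends $\alpha \col \Spec K(X) \to X$ to $(\alpha, \id_{K(X)})$. Under this bijection $\eta_X$ corresponds to $\delta_X$ and the constant map through $p$ corresponds to $p \times_K \id_{K(X)}$; moreover, a rational $K$-map $\bP^1 \times_K \Spec K(X) \dra X$ is the same data as a rational $K(X)$-map $\bP^1_{K(X)} \dra X_{K(X)}$, so the notions of direct $\rR$-equivalence on the two sides agree.

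For $(2) \Rightarrow (1)$, I fix an integral $K$-variety $Y$ and $\phi \in X(K(Y))$, and must show $\phi \sim p_{K(Y)}$. If $\phi$ is dominant, it induces a $K$-algebra inclusion $K(X) \hookrightarrow K(Y)$; base-changing the rational map $\bP^1_{K(X)} \dra X_{K(X)}$ supplied by (3) along this inclusion produces a rational $K(Y)$-map $\bP^1_{K(Y)} \dra X_{K(Y)}$ with restrictions $\phi$ at $0$ and $p_{K(Y)}$ at $\infty$. In general, I would apply the dominant case to the second projection $\pi_X \col Y \times_K X \to X$, obtaining $\pi_X \sim p$ in $X(K(Y \times X))/\rR$, and then specialize this equivalence along the generic point of the graph $\Gamma_\phi \subset Y \times_K X$. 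Since $\Gamma_\phi$ is birational to $Y$ via the first projection, the local ring $\cO_{Y \times_K X, \Gamma_\phi}$ is regular with fraction field $K(Y \times X)$ and residue field $K(Y)$. Applying Kollár's specialization map for $\rR$-equivalence \cite{kollar-spec} iteratively along a maximal chain of primes in this local ring---each step being specialization along a DVR---descends to $X(K(Y))/\rR$, and by the valuative criterion of properness $\pi_X$ specializes to $\phi$ while $p$ specializes to itself, yielding the result.

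The main obstacle is the non-dominant case of $(2) \Rightarrow (1)$: a naive attempt to compose the rational map $H \col \bP^1 \times X \dra X$ supplied by (2) with $\id_{\bP^1} \times \phi$ can fail. Although smoothness and properness of $X$ force the indeterminacy of $H$ to have codimension at least $2$ in $\bP^1 \times X$, the image closure $X_1 = \overline{\phi(Y)}$ may itself have codimension $\geq 2$, so $\bP^1 \times X_1$ could be contained entirely in the indeterminacy locus. The trick of passing to $Y \times_K X$ and specializing at the graph sidesteps this but shifts the burden onto Kollár's specialization theorem, which is the essential input beyond the formal manipulations in the other two implications.
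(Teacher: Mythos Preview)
Your proof is correct, and your treatment of $(2) \Leftrightarrow (3)$ via the bijection $X(K(X)) \simeq X_{K(X)}(K(X))$ is essentially the paper's argument phrased slightly more cleanly. The paper, however, does not prove $(1) \Leftrightarrow (2)$ at all: it simply cites \cite[Theorem~8.5.1]{kahn-sujatha}. Your argument for $(2) \Rightarrow (1)$---handling dominant $\phi$ by base change of (3) along $K(X) \hookrightarrow K(Y)$, and non-dominant $\phi$ by passing to $Y \times_K X$, applying the dominant case to $\pi_X$, and then iterating Koll\'ar's specialization along a regular system of parameters in $\cO_{Y \times_K X,\, \Gamma_\phi}$---is a genuinely different route. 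It has the virtue of being self-contained modulo Koll\'ar's result, which the paper invokes anyway in the very next lemma; the paper's approach, by contrast, defers entirely to the machinery of Kahn--Sujatha. One small remark: your $(2) \Rightarrow (1)$ tacitly fixes some $p \in X(K)$, so it needs $X(K) \neq \varnothing$; when $X(K) = \varnothing$ condition (2) is vacuous while (1) fails (taking $Y = \Spec K$), so the equivalence as stated already presumes a rational point. This is an artifact of the lemma's phrasing rather than a defect in your argument.
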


\begin{proof}
    For the equivalence of the first two conditions, see \cite[Theorem 8.5.1]{kahn-sujatha}.
    For (2) implies (3), given $h \col \bP^1_{K(X)} \to X$, there is an induced map $h_{K(X)} \col \bP^1_{K(X)} \to X_{K(X)}$.
    Applying this to a chain of direct $\rR$-equivalences between the generic point $\eta_X$ and a constant map, we get an $\rR$-equivalence between the diagonal and a constant map.
    Similarly, for (3) implies (2), a direct $\rR$-equivalence $h \col \bP^1_{K(X)} \to X_{K(X)}$ can be composed with the projection $X_{K(X)} \to X$, and the diagonal projects to the generic point.
\end{proof}

\begin{lemma}[Specialization of universal $\rR$-triviality]
\label{lem:specialization-of-universal-r-triv}
    Let $k$ be an algebraically closed field, and let $X \to S$ be a smooth, proper morphisms, where $S = \Spec k\llbracket t \rrbracket$.
    If the geometric generic fiber $X_{\bar \eta}$ is universally $\rR$-trivial over $\bar K$, then the special fiber is universally $\rR$-trivial over $k$.

\end{lemma}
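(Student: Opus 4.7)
The plan is to combine the diagonal characterization of universal $\rR$-triviality from Lemma~\ref{lem:universal-r-trivial}(3) with a resolution-of-indeterminacy argument in the spirit of the standard specialization of a decomposition of the diagonal (cf.~\cite{kollar-spec}). I begin with a reduction to a finite base. The hypothesized $\rR$-equivalence on $X_{\bar \eta}$ is witnessed by finitely many rational maps $\bP^1 \dashrightarrow X_{\bar \eta}$, whose defining coefficients lie in a finitely generated subfield of $\bar K(X_{\bar \eta})$, hence in $K_d(X_{\bar \eta})$ for some $d$, where $K_d = k(\!(t^{1/d})\!)$. Base changing along $R \to R_d = k\llbracket t^{1/d} \rrbracket$ does not affect $X_0$, since $R \to R_d$ induces the identity on residue fields, so after renaming I may assume $X_K$ itself is universally $\rR$-trivial over $K$; by properness of $X/R$, I may also fix a section $p \in X(R)$ with specialization $p_0 \in X_0(k)$.

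Next I would set up the specialization geometrically. Let $\eta_{X_0}$ be the generic point of $X_0 \subset X$, and set $\mathcal{O} := \mathcal{O}_{X, \eta_{X_0}}$, a DVR with uniformizer $t$, fraction field $K(X)$, and residue field $k(X_0)$. The base change $X_\mathcal{O} := X \times_S \Spec \mathcal{O}$ is smooth proper over $\mathcal{O}$, with generic fiber $X_{K(X)}$ and special fiber $(X_0)_{k(X_0)}$. Two tautological $\mathcal{O}$-valued points of $X_\mathcal{O}$ are relevant: the one induced by the inclusion $\Spec \mathcal{O} \hookrightarrow X$, whose fibers are the diagonal $\delta$ of $X_{K(X)}$ and the diagonal of $(X_0)_{k(X_0)}$; and the one induced by the composition $\Spec \mathcal{O} \to S \xrightarrow{p} X$, whose fibers are the constant maps $p \times \eta$ and $p_0 \times \eta_{X_0}$.

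The main step is to spread out the chain of direct $\rR$-equivalences $\delta \sim \cdots \sim p \times \eta$ from $K(X)$ down to $k(X_0)$. Each direct equivalence is given by a morphism $h_i \col \bP^1_{K(X)} \to X_{K(X)}$ (from a smooth curve to a proper target, so there is no indeterminacy on the generic fiber). Clearing denominators, $h_i$ spreads to a rational map $\tilde h_i \col \bP^1_\mathcal{O} \dashrightarrow X_\mathcal{O}$ whose indeterminacy, by properness of $X_\mathcal{O}$ and regularity of the surface $\bP^1_\mathcal{O}$, is supported at finitely many closed points of the special fiber $\bP^1_{k(X_0)}$. Resolving by blowups yields a morphism $\tilde Y_i \to X_\mathcal{O}$ whose special fiber is a morphism from a tree of rational curves to $(X_0)_{k(X_0)}$; the strict transforms of the sections $0, \infty$ meet this tree at the specialization points $h_i(0)_0$ and $h_i(\infty)_0$. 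Traversing the tree one $\bP^1$-component at a time produces a chain of direct $\rR$-equivalences in $(X_0)_{k(X_0)}$ between $h_i(0)_0$ and $h_i(\infty)_0$. Concatenating over $i$ yields an $\rR$-equivalence between the diagonal of $(X_0)_{k(X_0)}$ and the constant map $p_0 \times \eta_{X_0}$, whence Lemma~\ref{lem:universal-r-trivial}(3) implies that $X_0$ is universally $\rR$-trivial.

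The main subtlety, which is essentially the content of the Kollár specialization for $\rR$-equivalence \cite{kollar-spec}, lies in the observation that a connected tree of rational curves in a variety gives rise to a chain of direct $\rR$-equivalences between any two of its points; this is what converts ``specialization of rational curves'' (via the valuative criterion plus resolution of indeterminacy on $\bP^1_\mathcal{O}$) into a specialization of $\rR$-equivalence itself. Once this is granted, the rest of the argument is bookkeeping with the tautological $\mathcal{O}$-sections of $X_\mathcal{O}$ and the standard reduction via base change along $R \to R_d$.
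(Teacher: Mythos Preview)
Your proof is correct and follows essentially the same approach as the paper: reduce by a finite base change so that $X_K$ is already universally $\rR$-trivial, localize at the generic point of $X_0$ to obtain a DVR $\mathcal{O}$ with fraction field $K(X)$ and residue field $k(X_0)$, and then specialize the $\rR$-equivalence between the diagonal and a constant point via Koll\'ar's specialization map for $\rR$-equivalence classes, concluding by Lemma~\ref{lem:universal-r-trivial}(3). The only difference is presentational---you unpack Koll\'ar's argument (resolution of indeterminacy on $\bP^1_\mathcal{O}$ producing a tree of rational curves to traverse), whereas the paper simply cites \cite{kollar-spec} for the existence of the specialization map $X'_L(L)/\rR \to X'_\ell(\ell)/\rR$.
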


\begin{proof}
    First, up to a ramified base change $t \mapsto t^d$, we may assume that on the generic fiber $X_{\eta}$, there is an $\eta$-point $p \in X(\eta)$, and an $\rR$-equivalence between the generic point $\eta_X$ and the constant map $p_{k(X)}$.
    This is arranged by descending an $\rR$-equivalence from $X_{\bar \eta}$.

    Let $R'$ be the stalk of $\cO_X$ at the generic point of $X_0$, and let $S' = \Spec R'$.
    We write $X' = X \times_S S'$.
    Note that $R'$ is a DVR with fraction field $L := k(X)$ and residue field $\ell := k(X_0)$, and we write $X'_L$ and $X'_{\ell'}$ for the generic and special fibers, respectively.
    As explained in \cite{kollar-spec}, the specialization map for rational points
    \begin{equation}
    \label{eq:specialization-for-ratl-pts}
        X'_L(L) \to X'_\ell(\ell)
    \end{equation}
    descends to a specialization map for $\rR$-equivalence classes
    \[
        X'_{L}(L)/\rR \to X'_{\ell}(\ell)/\rR .
     \] 
    Let $\delta \in X'_L(L)$ correspond the diagonal map on $X_{\eta}$, and $\delta_0 \in X'_\ell(\ell)$ correspond to the diagonal map on $X_0$. 
    Similarly, let $p_L \in X'_L(L)$ be a constant map corresponding to a point in $p \in X_\eta(k)$. 
    The point $p$ specializes to $p_0 \in X_0(k)$, and we write $p_\ell$ for the corresponding constant map in $X'_\ell(\ell)$.
    Now, it is easy to see that $\delta$ specializes to $\delta_0$ and $p_L$ specializes to $p_\ell$ under \eqref{eq:specialization-for-ratl-pts}. 
    Hence, $\delta_\ell$ is $\rR$-equivalent to $p_\ell$. 
    By part (3) of Lemma~\ref{lem:universal-r-trivial}, $X_0$ is universally $\rR$-trivial.
\end{proof}

\subsection{Variation}

\begin{theorem}
\label{thm:constructibility}
    Let $k$ be an algebraically closed field of characteristic $0$. 
    Let $X$ and $Y$ be smooth and proper with connected fibers over a finite-type $k$-scheme $S$.
    The locus 
    \[
    	S_{\re} = \{s \in S \mid X_{\bar s} \textrm{ is $\rR$-equivalent to } Y_{\bar s}\},
    \]
    where $\bar s \to s$ is any geometric point, is a countable union of closed sets.
\end{theorem}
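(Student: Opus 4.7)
The plan is to parameterize all data that can witness an $\rR$-equivalence between fibers by a countable family of projective $S$-schemes, and then to use a closure argument to upgrade the resulting (a priori only constructible) decomposition of $S_{\re}$ into a countable union of closed sets. First, I would unfold the definition: $X_{\bar s}$ is $\rR$-equivalent to $Y_{\bar s}$ if and only if there exist rational maps $f \colon X_{\bar s} \dra Y_{\bar s}$ and $g \colon Y_{\bar s} \dra X_{\bar s}$, together with two finite chains of direct $\rR$-equivalences witnessing $g \circ f \sim_{\rR} \id_{X_{\bar s}}$ and $f \circ g \sim_{\rR} \id_{Y_{\bar s}}$, each link in a chain being a rational map $\bP^1 \times X_{\bar s} \dra X_{\bar s}$ (resp.\ $\bP^1 \times Y_{\bar s} \dra Y_{\bar s}$). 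Each rational map is uniquely encoded by the closure of its graph, a closed subscheme of an appropriate product of projective varieties. The discrete data needed to specify such a witness is finite: the lengths $(n,m)$ of the two chains, plus Hilbert polynomials (with respect to fixed relatively very ample line bundles) for each of the finitely many graphs involved. I will package this data as $\mathbf{P}$, which ranges over a countable index set.

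For each $\mathbf{P}$, let $\mathcal{H}_{\mathbf{P}}$ be the product over $S$ of the relevant relative Hilbert schemes of products of $X$, $Y$, and $\bP^1_S$ with the prescribed Hilbert polynomials. Since $X$ and $Y$ are proper over $S$, $\mathcal{H}_{\mathbf{P}}$ is projective over $S$. Inside $\mathcal{H}_{\mathbf{P}}$, I would isolate the locally closed subscheme $W_{\mathbf{P}}$ parameterizing tuples of subschemes which are (i) graph closures of rational maps of the required types (a locally closed condition: fix the Hilbert polynomial, require geometric irreducibility and reducedness of the fibers, and require the projection to the first factor to have degree one), and which (ii) satisfy the compatibility relations that assemble them into a witness of $\rR$-equivalence (composition identities, restriction identities at $\{0,\infty\} \times X$, the final chain endpoint equals the diagonal, etc.), each of which cuts out a further closed condition. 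By construction, $S_{\re} = \bigcup_{\mathbf{P}} \pi(W_{\mathbf{P}})$, where $\pi \colon \mathcal{H}_{\mathbf{P}} \to S$ is the structure map.

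Now let $\bar W_{\mathbf{P}}$ denote the closure of $W_{\mathbf{P}}$ in $\mathcal{H}_{\mathbf{P}}$. Since $\mathcal{H}_{\mathbf{P}}$ is projective over $S$, the set $\pi(\bar W_{\mathbf{P}})$ is closed. The key claim will be $\pi(\bar W_{\mathbf{P}}) \subset S_{\re}$. To prove it, given a geometric point $\bar s \in \pi(\bar W_{\mathbf{P}})$, one applies the valuative criterion to produce a DVR $R$ with fraction field $K$ and residue field $\kappa(\bar s)$, together with a morphism $\Spec R \to \bar W_{\mathbf{P}}$ sending the generic point into $W_{\mathbf{P}}$ and the closed point to a chosen lift of $\bar s$. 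This supplies a flat family of tuples of closed subschemes over $R$ which is a genuine $\rR$-equivalence tuple over $K$. Passing to the special fiber, the limiting subschemes may no longer be graph closures, but each contains a unique irreducible component dominating the first factor of its ambient product, which is itself a graph closure of a rational map. Extracting these dominant components produces a new tuple of rational maps on $X_{\bar s}$ and $Y_{\bar s}$; the tuple has possibly different discrete data $\mathbf{P}'$, but because the defining relations of $\rR$-equivalence are determined by generic behavior and pass to dominant components of flat limits, the new tuple still witnesses an $\rR$-equivalence between $X_{\bar s}$ and $Y_{\bar s}$. Hence $\bar s \in \pi(W_{\mathbf{P}'}) \subset S_{\re}$, and one concludes $S_{\re} = \bigcup_{\mathbf{P}} \pi(\bar W_{\mathbf{P}})$, a countable union of closed subsets.

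The hard part will be the limit step: verifying that after extracting dominant components of the flat limits, all the defining relations still hold on the special fiber. For instance, one must show that the dominant component of the specialization of a composition $g \circ f$ agrees with the composition of the dominant components of the specializations of $g$ and $f$, and that the restriction of the specialization of a homotopy $H_i$ to $\{0\} \times X$ agrees, at the level of dominant components, with the specialization of the restriction. These are refinements of the standard fact that rational maps specialize to rational maps, but each identity requires checking that taking dominant components commutes with the relevant operations on flat limits inside Hilbert schemes. No single verification is deep, but careful bookkeeping will be the main technical burden.
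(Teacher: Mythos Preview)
Your Hilbert-scheme parameterization of the witnesses for $\rR$-equivalence is essentially the same as the paper's (Lemmas~\ref{lemma:locus-of-r-equiv} and \ref{lem:locus-of-compositions}), and both arguments arrive at the intermediate conclusion that $S_{\re}$ is a countable union of locally closed subsets of $S$. The divergence is in how one upgrades ``locally closed'' to ``closed.'' The paper does \emph{not} argue directly with flat limits inside the Hilbert scheme; instead it first shows, using Corollary~\ref{cor:intro-re-motivic-volume} (the existence of $\Vol_{\re}$, i.e., the main theorem of the paper), that $S_{\re}$ is closed under specialization, and then combines this with the locally-closed decomposition.

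Your proposed direct closure argument has a genuine gap at exactly the point you flag as ``the hard part.'' The assertion that ``the dominant component of the specialization of a composition $g \circ f$ agrees with the composition of the dominant components of the specializations of $g$ and $f$'' is \emph{false} as a statement about rational maps. For a simple counterexample over $R = k\llbracket t\rrbracket$, take $X = Y = \bP^1$ and let $f_K, g_K \colon \bP^1_K \to \bP^1_K$ be $[x_0:x_1] \mapsto [tx_0:x_1]$ and $[y_0:y_1]\mapsto[y_0:ty_1]$. Then $g_K\circ f_K = \id$, whose specialization is $\id_{\bP^1_k}$; but $f_0$ and $g_0$ are the constant maps to $[0:1]$ and $[1:0]$, so $g_0\circ f_0$ is constant, not the identity. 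Thus the specialized chain $(H^i_0)$ you extract witnesses only that the \emph{specialization of} $g_K\circ f_K$ is $\rR$-equivalent to $\id$, not that $g_0\circ f_0$ is. If you reinterpret ``agrees'' to mean ``is $\rR$-equivalent to,'' then what you are asserting is precisely that composition in $\kscat$ commutes with specialization over a DVR---equivalently, that isomorphism in $\kscat$ specializes. That statement is true, but it is not bookkeeping: it is exactly what the paper extracts from the categorical motivic volume, and no elementary proof bypassing that machinery is given (Lemma~\ref{lem:specialization-of-universal-r-triv} handles only the case $Y = \Spec k$). So as written, your closure step assumes the theorem's most substantive input.
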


We prove Theorem~\ref{thm:constructibility} along the lines of the proof of the analogous result for rationality from \cite{de-fernex-fusi}.

\begin{lemma}
\label{lem:subscheme-vs-r-equiv}
    Let $X$ and $Y$ be smooth, projective, connected varieties over $k$.
    Let $\Gamma \subset \bP^1 \times X \times Y$ be an irreducible closed subscheme whose projection to $\bP^1 \times X$ is birational.
    Then:
    \begin{enumerate} 
    	\item $\Gamma$ contains the graph of a rational map $h \col \bP^1 \times X \dra Y$ as a dense open subset.
    	\item For each $t \in \bP^1$, there is a unique irreducible component $Z_t$ of the scheme-theoretic intersection $\Gamma_t := \Gamma \cap (\{t\} \times X \times Y)$ which dominates $X$ via the projection.
    	Moreover, the projection $Z_t \to X_t$ is birational, and $Z_t$ contains the closure of the graph of a rational map $h_t \col X \dra Y$ as a dense open subset.
    \end{enumerate}
\end{lemma}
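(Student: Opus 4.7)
For Part (1), I will use the isomorphism locus of the proper birational morphism $\pi\col \Gamma \to \bP^1 \times X$. There is a dense open $U \subset \bP^1 \times X$ over which $\pi$ is an isomorphism, and the composition $U \xrightarrow{\pi^{-1}} \Gamma \to Y$ (with the second arrow the projection onto the last factor) is a morphism, so it determines a rational map $h\col \bP^1 \times X \dashrightarrow Y$. The graph of $h|_U$ is then a locally closed subvariety of $\Gamma$; its closure in $\bP^1 \times X \times Y$ is an irreducible closed subvariety of $\Gamma$ of the same dimension as $\Gamma$, hence equal to $\Gamma$.

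For Part (2), the first step is to produce $h_t$ and its graph closure $Z_t$. Because $\bP^1 \times X$ is smooth and $Y$ is projective, the indeterminacy locus of $h$ has codimension at least $2$, so it does not contain the divisor $\{t\}\times X$. Restricting $h$ therefore yields a rational map $h_t\col X \dashrightarrow Y$, whose graph, where it is defined, lies literally in $\Gamma_t$ by Part (1); taking closures gives $Z_t \subset \Gamma_t$. Since $\Gamma$ is irreducible and dominates the smooth curve $\bP^1$, the projection $p\col \Gamma \to \bP^1$ is flat, so $\Gamma_t$ has pure dimension $\dim X$. The irreducible $Z_t$ of the same dimension is then an irreducible component of $\Gamma_t$, and the projection $Z_t \to X_t$ is birational by construction.

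The substantive step is uniqueness of the dominant component. Here the key tool is Zariski's Main Theorem, applied to the proper birational morphism $\pi$ with normal target $\bP^1 \times X$: every scheme-theoretic fiber $\pi^{-1}(t,x)$ is connected. I will then argue that $\pi^{-1}(t,x)$ is finite for $x$ generic in $X$. If $\{t\}\times X$ is not contained in the image $\pi(E)$ of the exceptional locus $E\subset \Gamma$ of $\pi$, then $\pi^{-1}(t,x)$ is a single point for generic $x$. Otherwise I will use the bound $\dim E \leq \dim \Gamma - 1 = \dim X$: the restriction $\pi\col \pi^{-1}(\{t\}\times X) \to \{t\}\times X$ is a surjective proper morphism with source and target of the same dimension, so its generic fiber is zero-dimensional. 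A finite connected set is a single point, so each irreducible component of $\Gamma_t$ dominating $\{t\}\times X$ meets $\pi^{-1}(t,x)$ in a common single point for generic $x$; hence all such components share a dense open and must coincide with $Z_t$.

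The principal obstacle will be the uniqueness statement, specifically the degenerate case in which $\{t\}\times X$ lies inside $\pi(E)$, where the fibers $\pi^{-1}(t,x)$ are a priori not governed by the isomorphism locus of $\pi$. The dimension bound on $E$ coupled with Zariski's Main Theorem is what forces these fibers to collapse to single points, and this is the crux of the argument.
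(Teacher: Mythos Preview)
Your proposal is correct and follows essentially the same route as the paper: define $h$ via the rational inverse of the birational projection, get $h_t$ from the codimension-$2$ bound on the indeterminacy locus, and prove uniqueness of the dominating component by combining Zariski's Main Theorem (connected fibers of $\pi$) with a dimension count showing the fiber over $(t,\eta_X)$ is zero-dimensional. The paper's proof is terser---it simply notes that the fiber $F=\pi^{-1}(t,\eta_X)$ is connected and ``for dimension reasons'' a singleton---whereas your case split on whether $\{t\}\times X\subset\pi(E)$ is unnecessary (the flatness argument you already gave shows $\dim\Gamma_t=\dim X$ directly, so $\Gamma_t\to X$ has finite generic fiber regardless), but the logic is the same.
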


\begin{proof}
    For (1), the rational map $h$ is defined by the rational inverse $X \times \bP^1 \dra \Gamma$, followed by the projection to $Y$. 
	For (2), $h$ is defined on an open subset of the form $U \times \bP^1$.
	Then $\Gamma_t \cap U$ is the graph of the composition
	\[
		\begin{tikzcd}
			h_t : U \cap (\{t\} \times X) \ar[r] & U \ar[r, dashed] & Y,
		\end{tikzcd}
	\]
	which is a birational map.
	We take $Z_t$ to be the unique irreducible component of $\Gamma_t$ containing $U \cap \Gamma_t$. 
	
	It only remains to show that $Z_t$ is the unique component of $\Gamma_t$ which dominates $X$. 
	Let $F \subset \Gamma$ be the preimage of the generic point of $\{t\} \times X$ under the projection $\Gamma \to \bP^1 \times X$.
	By Zariski's main theorem, $F$ is connected; on the other hand, for dimension reasons, no two elements of $F$ have a common specialization in $F$.
	Hence, $F$ is a singleton.
\end{proof}

In what follows, we use the notion of a rational map $f\col X \dra Y$ between smooth, proper schemes over a base scheme $S$. 
By this, we mean a subscheme $\bar \Gamma_f \subset X \times_S Y$ (regarded as the closure of the graph of a rational map $f$), with the property that the fibers of $\bar \Gamma_f$ over $S$ are geometrically irreducible, and that for each $s \in S$, $\bar \Gamma_{f, s} \to X_s$ is birational.
Given a morphism $T \to S$, we can form a pullback $f_T \col X_T \dra Y_T$, by pulling back the subscheme $\bar \Gamma_{f}$.

\begin{lemma}
\label{lemma:locus-of-r-equiv}
    Let $X \to S$ and $Y \to S$ be smooth, projective morphisms with geometrically connected fibers to a finite-type $k$-scheme $S$, and let $f, g \col X \dra Y$ be rational maps over $S$.
    The locus 
   	\[
   		\{s \in S \mid f_{\bar s} \sim_{\rR} g_{\bar s}\} \subset S
   	\]
   	for any geometric point $\bar s \to s$, is a countable union of locally closed subsets of $S$.
\end{lemma}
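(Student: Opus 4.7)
The plan is to parametrize direct $\rR$-equivalences via relative Hilbert schemes over $S$, apply Chevalley's theorem to obtain constructibility for each fixed Hilbert polynomial, and then take a countable union over all possibilities.

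First, I would introduce, for each Hilbert polynomial $P$, the relative Hilbert scheme $H_P = \Hilb^P(\bP^1 \times_S X \times_S Y / S)$, which is quasi-projective over $S$. Inside $H_P$, I would cut out a locally closed subscheme $U_P \subset H_P$ parametrizing those $\Gamma$ which are geometrically integral and whose projection to $\bP^1 \times_S X$ is birational. By Lemma~\ref{lem:subscheme-vs-r-equiv}, each geometric fiber $\Gamma_{\bar s}$ then encodes a pair of rational maps $h_0, h_\infty \col X_{\bar s} \dra Y_{\bar s}$, realized as the unique irreducible components of the fibers of $\Gamma$ over $0, \infty \in \bP^1$ that dominate $X_{\bar s}$.

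Next, I would impose the further locally closed conditions that $\bar\Gamma_{f_s}$ and $\bar\Gamma_{g_s}$ arise as these distinguished components of $\Gamma_0$ and $\Gamma_\infty$, respectively. This yields a locally closed subscheme $U_P^{f,g} \subset H_P$, finite type over $S$, whose image in $S$ is constructible by Chevalley's theorem. Summed over all $P$, this image is precisely the locus of $s$ where $f_{\bar s}$ and $g_{\bar s}$ are \emph{directly} $\rR$-equivalent. To handle chains of arbitrary length $n$, I would iterate this construction: for each $n$ and each tuple of Hilbert polynomials prescribing intermediate rational maps $h_1, \ldots, h_{n-1} \col X \dra Y$ (each encoded by its graph closure in $\Hilb(X \times_S Y / S)$) together with direct $\rR$-equivalences linking consecutive terms of the chain $f = h_0, h_1, \ldots, h_n = g$, I would assemble a locally closed subscheme of a product of relative Hilbert schemes, of finite type over $S$. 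Its image in $S$ is constructible, and the union over all $n$ and all Hilbert data recovers the full $\rR$-equivalence locus. Since a countable union of constructible sets is tautologically a countable union of locally closed subsets, the result follows.

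The main obstacle I expect is in verifying that the various geometric conditions imposed on points of $H_P$---geometric integrality of $\Gamma$, birationality of the projection $\Gamma \to \bP^1 \times_S X$, and the prescription of the components of $\Gamma_0, \Gamma_\infty$ dominating $X$ as $\bar\Gamma_{f_s}, \bar\Gamma_{g_s}$---indeed cut out honestly locally closed subschemes of the Hilbert scheme. The required semicontinuity statements for fiber dimensions, Hilbert polynomials of fibers, and flatness are standard, but it takes some care to organize them so that these loci are locally closed rather than merely constructible. In any event, since the final conclusion only requires a countable union of locally closed subsets, even a constructible-only verification at intermediate stages would suffice.
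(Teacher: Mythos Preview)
Your proposal is correct and follows essentially the same route as the paper: parametrize chains of direct $\rR$-equivalences by products of relative Hilbert schemes of $\bP^1_S \times_S X \times_S Y / S$, verify the relevant conditions are constructible on each component, and apply Chevalley's theorem.

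The one notable difference is in how the matching conditions are phrased. You ask that the distinguished component of $\Gamma_0$ \emph{equal} $\bar\Gamma_{f_s}$ (and similarly at each link of the chain), and you flag this as the delicate point. The paper sidesteps this by imposing instead the \emph{closed} conditions that $\Gamma^1_{h,0}$ \emph{contain} $\bar\Gamma_{f_h}$, and that consecutive fibers $\Gamma^j_{h,\infty} \cap \Gamma^{j+1}_{h,0}$ surject onto $X_h$; both are closed by upper semicontinuity of fiber dimension. The uniqueness part of Lemma~\ref{lem:subscheme-vs-r-equiv} then guarantees a posteriori that the distinguished components actually agree. This trick makes the constructibility verification cleaner than a direct equality check, but as you observe, for the stated conclusion mere constructibility suffices either way.
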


\begin{proof}
    For each $r > 0$, let $H^{\times r}$ be the $r$-fold self-product of the relative Hilbert scheme
    \[
    	H^{\times r} = \prod_{j = 1}^r H, \quad H = \Hilb(\bP^1_S \times_S X \times_S Y / S).
    \]
    We write $\Gamma^j \to H^{\times r}$ for the pullback of the universal subscheme over $H$ along the $j$th projection $\pr_j: H_r \to H$. 
    For each $S$-point $t \in \bP^1_S(S)$, we write
    \[
    	\Gamma_t^j = \pr(\Gamma^j \cap (\{t\} \times_S X \times_S Y \times_S H^{\times r})) \subset X \times_S Y \times_S H^{\times r}
    \]
    where $\pr$ is the projection. 
    For each $h \in H^{\times r}$, we get an $r$-tuple of subschemes
    \[
    	\Gamma_{h}^1, \dots, \Gamma_h^r \subset \bP^1_h \times_h X_h \times_h Y_h ,
    \]
    where the subscript $h$ refers to the base change along the induced map $h \to S$.
    Similarly, given $t \in \bP^1_h(h)$, we get subschemes
    \[
    	\Gamma^j_{h, t} \subset X_h \times_h Y_h
    \]
    for each $j \geq 0$.

	The idea is to define a subset $V_r \subset H^{\times r}$ parameterizing sequences of direct $\rR$-equivalences of length $r$, starting at $f$ and ending at $g$. 
	To that end, we declare that a point $h \in H^{\times r}$ lies in $V_r$ if the following conditions are met:
    \begin{enumerate} 
    	\item \label{item:urh-is-irred}
    	For each $1 \leq j \leq r$, $\Gamma^j_h$ is geometrically irreducible, and its projection to $\bP^1_h \times_h X$ is birational. 
    	\item \label{item:constr-isom-one}
    	The closed subscheme $\Gamma^1_{h, 0} \subset X_h \times_h Y_h$ contains $\bar \Gamma_{f_h}$, the closure of the graph of the pullback $f_h = f \times_S h:X_h \dra Y_h$.
    	\item \label{item:intermediate-step}
    	For $1 \leq j \leq r - 1$, the intersections
    	\[
    		\Gamma_{h, \infty}^j \cap \Gamma_{h, 0}^{j + 1} \subset X_h \times_h Y_h
    	\]
    	surject onto $X_h$ under the projection to the first factor.
    	\item \label{item:constr-isom-two}
    	The closed subscheme $\Gamma^r_{h, \infty} \subset X_h \times_h Y_h$ contains $\bar \Gamma_{g_h}$, the closure of the graph of the pullback $g_h = g \times_S h:X_h \dra Y_h$.
    \end{enumerate}
    We first show that $V_r$ is a countable disjoint union of schemes of finite type over $S$.

    Since $H^{\times r}$ is a countable disjoint union of schemes of finite type over $S$, it is enough to show that Conditions~\ref{item:urh-is-irred}--\ref{item:constr-isom-two} define constructible subsets of the connected components of $H^{\times r}$.
    For Condition~\ref{item:urh-is-irred}, the geometrically irreducible locus is constructible by \cite[\href{https://stacks.math.columbia.edu/tag/055B}{Tag 055B}]{stacks-project}.
    The constructibility of the birational locus is \cite[Lemma 2.4]{de-fernex-fusi} applied to the projection
    \[
    	\Gamma^j \to \bP^1_S \times_S X \times_{S} H^{\times r}, 
    \]
    since the domain is flat and the codomain is projective over each connected component of $H_r$.

    Conditions~\ref{item:constr-isom-one}--\ref{item:constr-isom-two} are closed conditions.
    Indeed, for Condition~\ref{item:constr-isom-one}, 
    let $Z$ be the intersection of $\Gamma_0^1$ and $\bar \Gamma_f \times_S H^{\times r}$ inside $X \times_S Y \times_S H^{\times r}$.
    Then Condition~\ref{item:constr-isom-one} holds exactly on the locus where the fibers of $Z \to H^{\times r}$ have the maximum possible dimension, $\dim X_h$. 
    Since $Z$ is proper over $H^{\times r}$, the locus is closed by upper semicontinuity of fiber dimension. 
    The argument for Condition~\ref{item:constr-isom-two} is identical.

    Similarly, for Condition~\ref{item:intermediate-step},
    let $W$ be the intersection of $\Gamma^{j + 1}_0$ and $\Gamma^j_{\infty}$ in $X \times_S Y \times_S H^{\times r}$.
    The projection $W \to X \times_S H^{\times r}$ is proper, so it is easy to see that the locus in $H^{\times r}$ where the projection is fiberwise surjective is closed. 
    For instance, since the fibers of $X \times_S H^{\times r}$ over $H^{\times r}$ are irreducible, one may apply semicontinuity of fiber dimension to the image of $W$. 

   	To conclude the proof, it suffices to show that $f_{\bar s}$ is $\rR$-equivalent to $g_{\bar s}$ for any geometric point $\bar s \to s$ if and only if $s$ lies in the image of a point $h \in V_r$ for some $r > 0$.
   	Indeed, the image of each $V_r$ in $S$ is a countable union of locally closed sets by Chevallay's theorem, so the same holds for the image of $\bigcup_{r > 0} V_r$.

   	First, let $h_1, \dots, h_r \col X_{\bar s} \times_{\bar s} \bP^1_{\bar s} \dra Y_{\bar s}$ be the rational maps giving an $\rR$-equivalence from $f_{\bar s}$ to $g_{\bar s}$.
   	The closures of their graphs, $\Gamma^1_{\bar s}, \dots, \bar \Gamma^r_{\bar s}$, give a geometric point $\bar s \to h \in H_r$, and we claim that $h$ lies in $V_r$.
   	In fact, Conditions~\ref{item:urh-is-irred}--\ref{item:constr-isom-two} can be checked after base change to $\bar s$, so it is enough to show that $\Gamma^j_{\bar s}$ satisfy the analogous conditions over $\bar s$. 
   	This is easily done using the argument of Lemma~\ref{lem:subscheme-vs-r-equiv}:
   	At each time $t \in \bP^1_{\bar s}$, the unique irreducible component of $\Gamma_{\bar s, t}^j$ dominating $X_{\bar s}$ is the closure of the graph of the rational map $h_{j, t} \col \{t\} \times_{\bar s} X_{\bar s} \dra Y_{\bar s}$.
   	In particular, the condition 
   	\[
   		h_{j, \infty} = h_{j + 1, 0} ,	
   	\]
   	which is part of the definition of direct $\rR$-equivalence, is equivalent to Condition~\ref{item:intermediate-step} that  the intersection $\Gamma^j_{\bar s, \infty} \cap \Gamma^{j + 1}_{\bar s, 0}$ dominates $X_{\bar s}$.
   	The argument for Conditions~\ref{item:constr-isom-one} and \ref{item:constr-isom-two} is similar.

    Conversely, suppose that $s$ lies in the image of $V_r$.
    Since $V_r$ is constructible in each connected component of $H_r$, the connected components of $V_r$ are of finite type over $S$. 
    In particular, there exists a point $h \in V_r$, $h \mapsto s$ whose residue field is finite over $\kappa(s)$; it follows that any geometric point $\bar s \to s$ factors through $h$.
    
    By Condition~\ref{item:urh-is-irred} and Lemma~\ref{lem:subscheme-vs-r-equiv}, the subschemes $\Gamma^j_{\bar s}$ are the graph-closures of rational maps $h_j \col \bP^1_{\bar s} \times_{\bar s} X_{\bar s} \dra Y_{\bar s}$, and for each time $t \in \bP^1_{\bar s}$, we get rational maps $h_{j, t} \col \{t\} \times X_{\bar s} \dra Y_{\bar s}$.
    Condition~\ref{item:constr-isom-one} implies that $h_{1, 0} = f_{\bar s}$. 
    Indeed, by Lemma~\ref{lem:subscheme-vs-r-equiv}, $\bar \Gamma_{h_{1, 0}}$ is the unique component of $\Gamma^1_{\bar s, 0}$ dominating $X_{\bar s}$.
    Since $\bar \Gamma_{f_{\bar s}}$ also dominates $X_{\bar s}$ and is contained in $\Gamma^1_{\bar s, 0}$ by assumption, $h_{1, 0}$ and $f_{\bar s}$ have the same graph.
    Similar arguments show that $h_{j, \infty} = h_{j + 1, 0}$, and $h_{r, 0} = g_{\bar s}$, so $f_{\bar s}$ and $g_{\bar s}$ are $\rR$-equivalent.
\end{proof}

\begin{lemma}
\label{lem:locus-of-compositions}
    Let $X$ and $Y$ be smooth, projective $S$-schemes, where $S$ is a variety over $k$.
    Given a pair of rational maps $f \col X \dra Y$ and $g \col X \dra Y$ over $S$, the locus of $s \in S$ such that $f_{\bar s} \circ g_{\bar s}$ and $g_{\bar s} \circ f_{\bar s}$ are $\rR$-equivalent to the identities for any geometric point $\bar s \to s$, is a countable union of locally closed sets.
\end{lemma}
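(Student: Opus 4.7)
The plan is to reduce Lemma~\ref{lem:locus-of-compositions} to Lemma~\ref{lemma:locus-of-r-equiv} by constructing, on a dense open $U \subset S$, rational maps $h_1$ and $h_2$ over $U$ that represent the compositions $f \circ g$ and $g \circ f$ fiberwise, and then proceeding by Noetherian induction. (Here I read the statement as $f\col X \dra Y$, $g\col Y \dra X$, so that the compositions make sense.) The finite-type hypothesis on $S$ permits such induction, so we may assume from the outset that $S$ is integral, with generic point $\eta$.

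At $\eta$, the rational maps $f_\eta \col X_\eta \dra Y_\eta$ and $g_\eta \col Y_\eta \dra X_\eta$ between smooth projective varieties have well-defined compositions, whose graph closures $\bar\Gamma_1^\eta \subset Y_\eta \times_\eta Y_\eta$ and $\bar\Gamma_2^\eta \subset X_\eta \times_\eta X_\eta$ are geometrically irreducible and birational to the first factor. I would then spread these out: let $Z_1 \subset Y \times_S Y$ and $Z_2 \subset X \times_S X$ be the Zariski closures of $\bar\Gamma_1^\eta$ and $\bar\Gamma_2^\eta$. Using generic flatness together with the constructibility of the geometrically irreducible and birational loci (as invoked in the proof of Lemma~\ref{lemma:locus-of-r-equiv}), I would shrink to a dense open $U \subset S$ on which $Z_1|_U$ and $Z_2|_U$ have geometrically irreducible fibers over $U$ whose first projections to $Y_U$ and $X_U$ are fiberwise birational. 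This produces rational maps $h_1 \col Y_U \dra Y_U$ and $h_2 \col X_U \dra X_U$ over $U$ in the sense demanded by Lemma~\ref{lemma:locus-of-r-equiv}.

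The key geometric check in the spreading-out step is that for every $s \in U$, the fiber $(Z_i)_{\bar s}$ really is the graph closure of the fiberwise composition $f_{\bar s} \circ g_{\bar s}$ (respectively $g_{\bar s} \circ f_{\bar s}$). After further shrinking $U$ one can arrange that the (closure of the) image of $g_{\bar s}$ is not contained in the indeterminacy locus of $f_{\bar s}$, since this is a locally closed condition on $S$ which holds at $\eta$; on such a locus the scheme-theoretic composition is well-defined, and by the uniqueness-of-birational-component argument already used in Lemma~\ref{lem:subscheme-vs-r-equiv}, the geometrically irreducible fiber $(Z_i)_{\bar s}$ must coincide with this graph closure. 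I expect this identification to be the main technical obstacle, as it requires carefully controlling the interaction between indeterminacy loci and image loci in families.

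Once $h_1$ and $h_2$ are constructed, applying Lemma~\ref{lemma:locus-of-r-equiv} to the pairs $(h_1, \id_{Y_U})$ and $(h_2, \id_{X_U})$ yields two countable unions of locally closed subsets of $U$ whose intersection is exactly the desired locus inside $U$. Noetherian induction on the complement $S \setminus U$, which has strictly smaller dimension (and only finitely many irreducible components of each dimension), then finishes the proof: the full locus is the union of the piece on $U$ and the piece coming from $S \setminus U$, hence a countable union of locally closed subsets of $S$.
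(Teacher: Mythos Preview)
Your approach is correct and shares the same skeleton as the paper's---noetherian induction on $S$, reduction to Lemma~\ref{lemma:locus-of-r-equiv} over a dense open, then recursion on the complement---but the two diverge in how the composition is packaged as a rational map over the base.

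You spread out the graph closure of $g_\eta \circ f_\eta$ directly and must then verify that its fibers coincide with the fiberwise graph closures $\bar\Gamma_{g_{\bar s}\circ f_{\bar s}}$. You correctly flag this as the main technical point; it can indeed be carried out (e.g., by passing through the triple product $X\times_S Y\times_S X$ and the unique component of $\bar\Gamma_f\times_Y\bar\Gamma_g$ dominating the first factor), though the invocation of Lemma~\ref{lem:subscheme-vs-r-equiv} is not quite the right citation---that lemma handles specialization along $\bP^1$, not composition. The paper sidesteps this check with a small trick: it resolves the indeterminacy of $f$ at the generic point and spreads out to get a genuine morphism $f'\colon X'\to Y$ over $U$ with $\pi\colon X'\to X$ fiberwise birational. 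Then, since $[f_s\circ\pi_s]=[f'_s]$ in $\kscat$, the condition $[g_s\circ f_s]_R=[\id]_R$ becomes $[g_s\circ f'_s]_R=[\pi_s]_R$, and Lemma~\ref{lemma:locus-of-r-equiv} is applied to the pair $(g\circ f',\,\pi)$ of rational maps $X'\dra X$. The gain is that with $f'$ a morphism, forming and specializing $g\circ f'$ is just a pullback of $\bar\Gamma_g$, so the fiberwise identification is more transparent. Your route is a touch more direct in that it never changes the source variety; the paper's route trades that for an easier spreading-out step.
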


The expressions $f \circ g$ and $g \circ f$ in the statement of Lemma~\ref{lem:locus-of-compositions} are the compositions of the $\rR$-equivalence classes of $f$ and $g$;
we do not assume that $f$ and $g$ are composable as rational maps, although is arranged in the course of the proof.

\begin{proof}
	We proceed by noetherian induction on $S$.
	Let $\eta$ be a generic point of $S$.
	Choose a birational morphism $\pi_{\eta} \col X'_{\eta} \to X$, where $X'_{\eta}$ is smooth and projective over $\eta$, which resolves the indeterminacy of $f_{\eta}$.
	Then $\pi_{\eta}$ spreads out to a morphism $\pi_U \col X'_U \to X_U$ between smooth, proper schemes over an open subscheme $U \subset S$.
	Moreover, after shrinking $U$, we may assume that $\pi_s \col X'_s \to X_s$ is birational for every $s \in U$, since the property of being birational is constructible \cite[Lemma 2.4]{de-fernex-fusi}. 

	Hence, after replacing $S$ by a dense open subscheme, there is a morphism $\pi \col X' \to X$ such that $X'$ is smooth and proper over $S$ and $\pi_s$ is birational for all $s \in S$, and a morphism $f'\col X' \to Y$ such that $f'_s = \pi_s \circ f_s$ as rational maps for all $s \in S$.
	Then the locus where $g \circ f$ is $\rR$-equivalent to the identity coincides with the locus where $g \circ f'$ is $\rR$-equivalent to $\pi'$, which is a countable union of locally closed subsets by Lemma~\ref{lemma:locus-of-r-equiv}.

	By an identical argument, the locus where $f \circ g$ is $\rR$-equivalent to the identity is a countable union of locally closed subsets. 
	We conclude by taking the intersection of the locus for $g \circ f$ and the locus for $f \circ g$.
\end{proof}

\begin{proof}[Proof of Theorem~\ref{thm:constructibility}]
	Using Corollary~\ref{cor:intro-re-motivic-volume}, one can follow the proof of \cite[Theorem 4.14]{NicShin19} to show that $S_{\re}$ is closed under specialization.
	Hence, it suffices to show that it is countable union of locally closed subsets.
    By noetherian induction and Chow's lemma, we may assume that $X$ and $Y$ are projective over $S$.

    Let $V_1 \subset \Hilb(X \times_S Y / S)$ be the set of points $h$ such that the corresponding subscheme $\Gamma_h \subset X_h \times_h Y_h$ is geometrically irreducible and projects birationally onto $X_h$.
    In other words, $V_1$ parameterizes the graph-closures of rational maps $X \dra Y$ over $S$, and the universal subscheme over $V_1$ gives a rational map
    \[
    	X \times_S V_1 \dra Y \times_S V_1
    \]
    relative to $V_1$.
	Similarly, we define $V_2$, which parameterizes the graph-closures of rational maps $Y \dra X$.
	Both $V_1$ and $V_2$ are countable disjoint unions of finite-type $S$-schemes by the argument of \cite[Proposition 2.3]{de-fernex-fusi}. 

    Let $V = V_1 \times V_2$. 
    There are universal rational maps
    \[
    	f :X \times_S V \dra Y \times_S V, \quad g : Y \times_S V \dra X \times_S V
    \]
    over $V$, obtained by pulling back the universal subschemes from $V_1$ and $V_2$.
    By Lemma~\ref{lemma:locus-of-r-equiv}, the locus $V' \subset V$ of $v \in V$ where $f_{\bar v}$ and $g_{\bar v}$ are inverse up to $\rR$-equivalence for any geometric point $\bar v \to v$, is a countable union of locally closed sets in each connected component of $V$.
	Since $V$ has countably many connected components and each component is finite-type over $S$, the image of $V'$ in $S$ is a countable union of locally closed sets.
	On the other hand, the image is exactly $S_{\re}$.
\end{proof}

	\bibliography{draft-bib}
	\bibliographystyle{amsalpha}

\end{document}